\documentclass[a4paper,11pt]{article}
\usepackage{amsmath}
\usepackage{amsfonts}
\usepackage{amssymb}
\usepackage{amsthm}
\usepackage{graphicx}
\usepackage[all]{xy,xypic}

\setcounter{page}{1} 

\setlength{\textheight}{21.6cm} 

\setlength{\textwidth}{14cm} 

\setlength{\oddsidemargin}{1cm} 

\setlength{\evensidemargin}{1cm} 

\pagestyle{myheadings} 

\thispagestyle{empty}

\date{} 
\begin{document} 
\centerline {Pseudo-Primary, Classical Prime and Pseudo-Classical Primary Elements in Lattice Modules} 
\centerline{} 

\centerline{\bf {A.~V.~Bingi$^{1}$ and C.~S.~Manjarekar$^{2}$}}
\centerline{} 

\centerline{} 

\centerline{$^{1}$ Department of Mathematics}
\centerline{St.~Xavier's College(autonomous), Mumbai-400001, India}
\centerline{$email: ashok.bingi@xaviers.edu$}

\centerline{}
\centerline{$^{2}$ Formerly Professor at Department of Mathematics}
\centerline{Shivaji University, Kolhapur-416004, India}
\centerline{$email: csmanjrekar@yahoo.co.in$} 

\newtheorem{thm}{Theorem}[section]
 \newtheorem{cor}[thm]{Corollary}
 \newtheorem{lem}[thm]{Lemma}
 \newtheorem{prop}[thm]{Proposition}
 \newtheorem{defn}[thm]{Definition}
\newtheorem{rem}[thm]{Remark}
 \newtheorem{example}[thm]{Example}
\begin{abstract}
  In this paper, we introduce the notion of pseudo-primary elements and pseudo-classical primary elements in an
   $L$-module $M$ and obtain their characterizations. The aim of the paper is to show $rad(N)\in M$, the radical of $N\in M$ is prime if $N$ is either pseudo-primary or pseudo-classical primary. Also, we study classical prime elements of an $L$-module $M$ to obtain many of its characterizations and its properties. 
\end{abstract}

{\bf 2010 Mathematics Subject Classification:} 06B23, 06B99 
\paragraph*{}
{\bf Keywords:-} pseudo-primary element, classical prime element, pseudo-classical primary element, radical of an element, saturation of an element

\section{Introduction}
\paragraph*{}  A multiplicative lattice $L$ is a complete lattice provided with commutative, associative and join distributive multiplication in which the largest element $1$ acts as a multiplicative identity. An element $e\in L$ is called meet principal if $a\wedge be=((a:e)\wedge b)e$ for all $a,b\in L$. An element $e\in L$ is called join principal if $(ae\vee b):e=(b:e)\vee a$ for all $a,b\in L$. An element $e\in L$ is called principal if $e$ is both meet principal and join principal. An element $a\in L$ is called compact if for  $X\subseteq L$, $a\leqslant \vee X$  implies the existence of a finite number of elements $a_1,a_2,\cdot\cdot\cdot,a_n$ in $X$ such that $a\leqslant a_1\vee a_2\vee\cdot\cdot\cdot\vee a_n$. The set of compact elements of $L$ will be denoted by $L_\ast$. If each element of $L$ is a join of compact elements of $L$ then $L$ is called a compactly generated lattice or simply a CG-lattice.  $L$ is said to be a principally generated lattice or simply a PG-lattice if each element of $L$ is the join of principal elements of $L$. Throughout this paper, $L$ denotes a compactly generated multiplicative lattice with $1$ compact in which every finite product of compact elements is compact.
       
For $a,b\in L $, $(a:b)= \vee \{x \in L \mid xb \leqslant a\} $. The radical of $a\in L$ is denoted by $\sqrt{a}$ and is defined as $\vee \{ x \in L_\ast \mid x^{n} \leqslant a$, for  some  $n\in Z_+\}$. An element $a\in L$ is said to be proper if $a<1$. A proper element $p\in L$ is called a prime element if $ab\leqslant p$ implies $a\leqslant p$ or $b\leqslant p$ where $a,b\in L$ and is called a primary element if $ab\leqslant p$ implies $a\leqslant p$ or $b^n\leqslant p$ for some $n\in Z_+$ where $a,b\in L_\ast$. A proper element $q\in L$ is called $p$-primary if $q$ is primary and $p=\sqrt{q}$ is prime.  The reader is referred to \cite{AAJ} for general background and terminology in multiplicative lattices.	
       	   
           Let $M$ be a complete lattice and $L$ be a multiplicative lattice. Then $M$ is called $L$-module or module over $L$ if there is a multiplication between elements of $L$ and $M$ written as $aB$ where $a \in L$  and $B \in M$ which satisfies the following properties: \\
           \textcircled{1}~~$(\underset{\alpha}{\vee} a_\alpha)A=\underset{\alpha}{\vee}(a_\alpha\ A)$  \\
           \textcircled{2}~~$a(\underset{\alpha}{\vee} A_\alpha)=\underset{\alpha}{\vee} (a\ A_\alpha)$  \\
           \textcircled{3}~~$(ab)A=a(bA)$  \\
           \textcircled{4}~~$1A=A$  \\
           \textcircled{5}~~$0A=O_M,  \ for \ all \ a, a_\alpha\ ,b \in L \ and \ A, A_\alpha \in M $ where $1$ is the supremum of $L$ and $0$ is  the infimum of $L$. We denote by $O_M$ and $I_M$ for the least element and the greatest element of $M$ respectively. Elements of $L$ will generally be denoted by $a,b,c,\cdot\cdot\cdot $ and elements of $M$ will generally be denoted by $A,B,C,\cdot\cdot\cdot$   
           
           Let $M$ be an $L$-module. For $N \in M$ and $a \in L$ , $(N:a) = \vee \{X \in M  \ \mid \  aX \leqslant N \}$. For $A,B \in M$, $(A:B) = \vee \{ x \in L  \ \mid  \ xB \leqslant A \} $. If $(O_M:I_M)=0$ then $M$ is called a faithful $L$-module. An $L$-module $M$ is called a multiplication lattice module if for every element $N \in M$ there exists an element $a \in L$ such that $N = aI_M$. An element $N\in M$ is called meet principal if $(b\wedge (B:N))N=bN\wedge B$ for all $b\in L, B\in M$.  An element $N\in M$ is called join principal if $b\vee (B:N)=((bN\vee B):N)$ for all $b\in L, B\in M$. An element $N\in M$ is said to be principal if $N$ is both meet principal and join principal. $M$ is said to be a PG-lattice $L$-module if each element of $M$ is the join of principal elements of $M$. An element $N\in M$ is called compact if $N\leqslant \underset{\alpha}{\vee} A_{\alpha}$ implies $N\leqslant A_{\alpha_{1}}\vee A_{\alpha_{2}}\vee\cdot\cdot\cdot\vee A_{\alpha_{n}}$ for some finite subset $\{\alpha_1,\alpha_2,\cdot\cdot\cdot,\alpha_n\}$. The set of compact elements of $M$ is denoted by $M_\ast$. If each element of $M$ is a join of compact elements of $M$ then $M$ is called a CG-lattice $L$-module.  
       An element $N\in M$ is said to be proper if $N < I_M$. A proper element $N\in M$ is said to be maximal if whenever there exists an element $B\in M$ such that $N \leqslant B$ then either $N=B$ or $B=I_M$. A proper element $N\in M$ is said to be prime if for all $a\in L$,  $X\in M$, $aX \leqslant N$ implies either $X\leqslant N$ or $aI_M \leqslant N$. A proper element $N\in M$ is said to be primary if for all $a\in L$, $X\in M$, $aX \leqslant N$ implies either $X\leqslant N$ or $a^n I_M \leqslant N$ for some $n\in Z_+$.  A proper element $N\in M$ is said to be a  radical element if $(N:I_M)=\sqrt{(N:I_M)}$ where $\sqrt{(N:I_M)}= \vee \{x\in L_\ast \mid x^n\leqslant (N:I_M) \ for \ some \ n\in Z_+\}$.  A proper element $N\in M$ is said to be semiprime if for all $a,\ b\in L$, $abI_M\leqslant N$ implies either $aI_M\leqslant N$ or $bI_M\leqslant N$. A proper element $N\in M$ is said to be $p$-prime if $N$ is prime and $p=(N:I_M)\in L$ is prime. A proper element $N\in M$ is said to be $p$-primary if $N$ is primary and $p=\sqrt{N:I_M}\in L$ is prime. A prime element $N\in M$ is said to be  minimal prime over $X\in M$ if $X\leqslant N$ and whenever there exists a prime element $Q\in M$ such that $X\leqslant Q\leqslant N$ then $Q=N$.  The reader is referred to \cite{A}, \cite{CT} and \cite{J} for general background and terminology in lattice modules.
      
   The concept of primary-like submodules in an $R$-module $M$ was introduced by H.~M.~Fazaeli et.~al.~in \cite{FR},~\cite{FR2} and \cite{FRS} which is a new generalization of a primary ideal on one hand and a generalization of a prime submodule on other hand. Also, M.~Behboodi et.~al.~in \cite{BK2} and H.~M.~Fazaeli et.~al.~in \cite{FRS} introduced the concept of weakly prime submodules and weakly primary-like submodules in an $R$-module $M$ respectively. We extend the concepts of primary-like submodules, weakly prime submodules and weakly primary-like submodules of an $R$-module $M$ to an $L$-module $M$ by introducing the notions of pseudo-primary, classical prime and pseudo-classical primary elements in a lattice module $M$ respectively and obtain their characterizations. Lattice module version of some results in ~\cite{FR}, ~\cite{FR2} are obtained and significant difference lies in the fact that results with a lot of work on principal elements of $M$ had to be developed to obtain them.  
                  
                  In the first section of this paper, we define and characterize pseudo-primary elements and pseudo-classical primary elements of a lattice module $M$. In the second section, we study and characterize classical prime elements of a lattice module $M$. In the third section, we define and characterize pseudo-classical primary elements of a lattice module $M$.  
                
 \section{Pseudo-primary elements of $M$}
       
       \paragraph*{} In an attempt of extending the concept of primary-like submodules of an $R$-module $M$ as introduced in \cite{FRS} to an $L$-module $M$, we introduce the notion of pseudo-primary elements in an $L$-module $M$. Note that instead of calling primary-like elements of $M$, we shall call such elements, pseudo-primary.  Before defining pseudo-primary element of a lattice module, we prove that $(rad(N):I_M)=\sqrt{N:I_M}$ for every $N\in M$ as proved in an $R$-module $M$ by Z.~A.~El-Bast et.~al. in \cite{ES}.           
      
       By Proposition 2 of \cite{CT}, if $(N:I_M)$ is a maximal element of $L$ then $N$ is a prime element of an $L$-module $M$. But
       from the Example 3.1 in \cite{A}, it is clear that if $(N:I_M)$ is a prime element of $L$ then $N$ need not be a prime element of an $L$-module $M$. However, this is true if $M$ is a multiplication lattice $L$-module as shown in the following lemma.
        
        \begin{lem}\label{L-C71}
        Let $N$ be a proper element of a multiplication lattice $L$-module $M$. Then $N$ is a prime element of $M$ if and only if $(N:I_M)$ is a prime element of $L$.
        \end{lem}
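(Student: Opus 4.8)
The plan is to prove the two implications separately, using the multiplication hypothesis to pass between elements of $M$ and elements of $L$.

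For the forward direction, suppose $N$ is a prime element of $M$; then $(N:I_M)$ is automatically proper (since $N<I_M$ forces $(N:I_M)<1$), so it remains to check the primeness condition. Take $a,b\in L$ with $ab\leqslant(N:I_M)$, i.e.\ $abI_M\leqslant N$. Writing this as $a(bI_M)\leqslant N$ with $bI_M\in M$ and applying primeness of $N$, either $bI_M\leqslant N$ — which gives $b\leqslant(N:I_M)$ — or $aI_M\leqslant N$, which gives $a\leqslant(N:I_M)$. Hence $(N:I_M)$ is prime in $L$. Notice this direction does not even need the multiplication hypothesis.

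For the converse, suppose $p:=(N:I_M)$ is a prime element of $L$; we must show $N$ is prime in $M$. Let $a\in L$ and $X\in M$ with $aX\leqslant N$. Since $M$ is a multiplication lattice $L$-module, write $X=cI_M$ for some $c\in L$. Then $acI_M\leqslant N$, so $ac\leqslant(N:I_M)=p$. Primeness of $p$ gives $a\leqslant p$ or $c\leqslant p$. If $a\leqslant p=(N:I_M)$ then $aI_M\leqslant N$, as required. If $c\leqslant p=(N:I_M)$ then $cI_M\leqslant N$, i.e.\ $X\leqslant N$, as required. Thus $N$ is prime in $M$.

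The only genuine subtlety — and the step I would expect to scrutinize most carefully — is the representation $X=cI_M$: it is exactly here that the multiplication hypothesis is essential, and the earlier remark (citing Example 3.1 of \cite{A}) confirms that without it the converse can fail. One should also confirm that $N$ being proper and $p$ being proper are compatible, but $p=(N:I_M)=1$ would force $I_M\leqslant N$, contradicting $N<I_M$, so properness is never in question. Everything else is a routine unwinding of the definitions of $(N:I_M)$, of a prime element of $L$, and of a prime element of $M$.
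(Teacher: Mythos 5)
Your proof is correct and follows essentially the same route as the paper: the converse is handled identically by writing $X=cI_M$ via the multiplication hypothesis and reducing to primeness of $(N:I_M)$ in $L$. The only difference is that you spell out the forward direction explicitly (the paper simply cites Proposition 3.6 of \cite{A} for it), which is a harmless and indeed clarifying addition.
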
 
        \begin{proof}
        Assume that $(N:I_M)$ is a prime element of $L$. Let $aX\leqslant N$ for $a\in L$, $X\in M$. As $M$ is a multiplication lattice $L$-module, there exists $x\in L$ such that $X=xI_M$. Then $ax\leqslant (N:I_M)$. It follows that either $a\leqslant (N:I_M)$ or $x\leqslant (N:I_M)$ which implies either
        $a\leqslant (N:I_M)$ or $xI_M=X\leqslant N$ and hence $N$ is a prime element of $M$. The converse part is clear from Proposition 3.6 of \cite{A}. 
        \end{proof} 
       
       According to the definition 3.1 in \cite{MB1}, the {\bf radical} of a proper element $N\in M$ is denoted as $rad(N)$ and is defined as the element $\wedge\{P\in M \mid P$ is  a  prime element  and $ N\leqslant P \}$. Clearly, if $N\in M$ is itself prime then $rad(N)$ is also prime. Also, in a multiplication lattice $L$-module $M$, if $(N:I_M)\in L$ is prime then by Lemma $\ref{L-C71}$, $\ rad(N)$ is also prime. In this section, we show under what other conditions $rad(N)$ is prime.
       
       Obviously, $N\leqslant rad(N)$ for all $N\in M$. From Theorem 3.6 of \cite{MB1}, it is clear that $rad(N)=N$ if $N$ is either a radical element or a prime element of a multiplication lattice $L$-module $M$. Further, in any $L$-module $M$, by Lemma 3.5 of \cite{MB1}, it follows that $\sqrt{N:I_M}\leqslant (rad(N):I_M)$ and equality holds if $M$ is a multiplication lattice $L$-module, as shown in the following theorem.
       
       \begin{thm}\label{T-C71}
       Let $L$ be a PG-lattice and $M$ be a faithful multiplication PG-lattice $L$-module with $I_M$ compact. Then $(rad(N):I_M)=\sqrt{N:I_M}$ for every proper element $N\in M$.
       \end{thm}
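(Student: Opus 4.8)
The inequality $\sqrt{N:I_M}\leqslant (rad(N):I_M)$ holds in any $L$-module by Lemma 3.5 of \cite{MB1}, so the work is entirely in the reverse inequality $(rad(N):I_M)\leqslant\sqrt{N:I_M}$ under the stated hypotheses. The natural strategy is to pass to the quotient picture available in a multiplication module: recall that in a multiplication lattice $L$-module $M$ every element $B\in M$ is of the form $bI_M$, and the map $b\mapsto bI_M$ interacts well with radicals. First I would reduce to showing that the element $\sqrt{N:I_M}\cdot I_M$ is contained in every prime element $P\in M$ with $N\leqslant P$; since $rad(N)=\wedge\{P\in M : P \text{ prime}, N\leqslant P\}$, this gives $\sqrt{N:I_M}\cdot I_M\leqslant rad(N)$... wait, that is the wrong direction. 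Instead I would argue directly on the residual: it suffices to show that for every compact $x\in L_\ast$ with $xI_M\leqslant rad(N)$ we have $x\leqslant\sqrt{N:I_M}$, i.e. $x^n\leqslant(N:I_M)$ for some $n$, because $M$ being a PG-lattice (hence CG-lattice) module forces $(rad(N):I_M)=\vee\{x\in L_\ast : xI_M\leqslant rad(N)\}$ and $L$ being PG lets us reduce each such $x$ to principal generators.

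**Key steps, in order.** (1) Record the residuation facts: since $M$ is multiplication, $rad(N)=(rad(N):I_M)I_M$, and more usefully, the primes of $M$ correspond via $P\mapsto(P:I_M)$ to primes of $L$ containing $(N:I_M)$ — here I would invoke Lemma~\ref{L-C71} together with Proposition 3.6 of \cite{A} to get that $P$ ranges over $\{pI_M : p \text{ prime in } L,\ p\geqslant(N:I_M)\}$, at least over those primes that arise as $(P:I_M)$ for $P\in M$ prime; faithfulness of $M$ is what makes $p\mapsto pI_M$ injective and order-reflecting so this correspondence is clean. (2) Using that $1=I_M$-residual behaves well, compute $(rad(N):I_M)=\bigl(\wedge_P P : I_M\bigr)$ and push the residual inside the meet: in general $(\wedge_\alpha B_\alpha : I_M)=\wedge_\alpha(B_\alpha:I_M)$ always holds (residuation is a right adjoint, so it preserves meets), giving $(rad(N):I_M)=\wedge\{(P:I_M) : P\text{ prime},\ N\leqslant P\}$. (3) Now each $(P:I_M)$ is a prime element of $L$ containing $(N:I_M)$, and by the definition of the radical in a multiplicative lattice (the join of compacts some power of which lies below $(N:I_M)$), the classical fact $\sqrt{a}=\wedge\{p\in L : p\text{ prime},\ a\leqslant p\}$ in a CG multiplicative lattice — which needs $L$ compactly generated with $1$ compact, exactly our standing hypothesis — yields $\sqrt{(N:I_M)}\geqslant\wedge\{(P:I_M)\}=(rad(N):I_M)$, provided every prime of $L$ above $(N:I_M)$ actually occurs as $(P:I_M)$ for a prime $P\geqslant N$ of $M$. (4) That last provision is handled by step (1): given a prime $p\geqslant(N:I_M)$ in $L$, set $P:=pI_M$; then $(P:I_M)\geqslant p$ and one checks $(pI_M:I_M)=p$ using faithfulness plus $I_M$ compact and the multiplication property, and $P$ is prime in $M$ by Lemma~\ref{L-C71}, while $N=(N:I_M)I_M\leqslant pI_M=P$.

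**Main obstacle.** The crux is step (4) together with the precise identity $(pI_M:I_M)=p$: in a general multiplication module one only gets $(pI_M:I_M)\geqslant p$, and equality can fail without faithfulness — this is precisely why the hypothesis that $M$ be faithful (and the PG/compactness conditions that make $I_M$ "cancellable enough") is indispensable. I expect to need a short lemma, in the spirit of the cancellation results for faithful multiplication modules, saying that $aI_M\leqslant bI_M\Rightarrow a\leqslant b$ for $a\in L_\ast$, proved by writing $a$ as a join of principal elements of $L$, using that principal elements of a faithful multiplication module are cancellative, and that $I_M$ is compact so the join can be taken finite. The other steps — pushing residuation through meets, and the characterization $\sqrt a=\wedge\{\text{primes above }a\}$ in a CG multiplicative lattice — are standard and can be cited or dispatched in a line each. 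So the proof is: reduce $(rad(N):I_M)$ to a meet of primes $(P:I_M)$ of $L$; show these are cofinal among all primes of $L$ above $(N:I_M)$ via the faithful-multiplication cancellation lemma; conclude equality with $\sqrt{N:I_M}$ by the prime-intersection description of the radical in $L$; and combine with the always-true reverse inequality from \cite{MB1}.
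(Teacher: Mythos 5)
Your proposal is correct, but it takes a genuinely different and more self-contained route than the paper. The paper's proof is essentially two lines: since $M$ is a multiplication module, $rad(N)=(rad(N):I_M)I_M$; Theorem 3.6 of \cite{MB1} is quoted as a black box to get $rad(N)=(\sqrt{N:I_M})I_M$; and then the two expressions are equated and $I_M$ is cancelled via Theorem 5 of \cite{CT} (the cancellation law for faithful multiplication lattice modules with $I_M$ compact). What you do instead is, in effect, reprove the content of that external theorem: you push the residual $(-:I_M)$ through the meet defining $rad(N)$ (valid, since $(-:I_M)$ is the right adjoint of $a\mapsto aI_M$ and so preserves meets), identify the primes of $M$ above $N$ with the primes of $L$ above $(N:I_M)$ via Lemma \ref{L-C71} together with $(pI_M:I_M)=p$, and finish with the classical description $\sqrt{a}=\wedge\{p \text{ prime}\mid a\leqslant p\}$ in a compactly generated multiplicative lattice. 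You correctly isolate the crux --- that every prime $p\geqslant (N:I_M)$ of $L$ must actually arise as $(P:I_M)$ for a prime $P\geqslant N$ of $M$, which is exactly where faithfulness, the multiplication property and compactness of $I_M$ enter --- and the cancellation lemma you anticipate needing is precisely Theorem 5 of \cite{CT}, which the paper itself invokes at the analogous point. The trade-off: the paper's argument is shorter but leans entirely on the imported Theorem 3.6 of \cite{MB1}, while yours is longer but exposes the underlying prime correspondence and would survive even without that reference; both ultimately rest on the same cancellation property.
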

       \begin{proof}
       Since $M$ is a multiplication lattice $L$-module, $rad(N)=(rad(N):I_M)I_M$. But by Theorem 3.6 of \cite{MB1}, we have $rad(N)=(\sqrt{N:I_M})I_M$. Therefore $(rad(N):I_M)I_M=(\sqrt{N:I_M})I_M$. Since $I_M$ is compact, by Theorem 5 of $\cite{CT}$, we have $(rad(N):I_M)=\sqrt{N:I_M}$. 
       \end{proof}
       
       Obviously, $(rad(N):I_M)=(N:I_M)$ if $N$ is either a radical element of $M$ or a prime element of a multiplication lattice $L$-module $M$. 
       
       Now we define a pseudo-primary element of an $L$-module $M$.
       
       \begin{defn} 
       A proper element $N$ of an $L$-module $M$ is said to be {\bf pseudo-primary} if for all $X\in M$, $a\in L$, $aX\leqslant N$ implies either $a\leqslant (N:I_M)$ or $X\leqslant rad(N)$. 
       \end{defn}
       
       \begin{defn}
       An $L$-module $M$ is said to be pseudo-primary if the element $O_M$ of $M$ is itself pseudo-primary. 
       \end{defn}
       
       Clearly, every prime element $N\in M$ is pseudo-primary. The converse is true in a multiplication lattice $L$-module $M$ if $(N:I_M)\in L$ is prime, since in such a case $rad(N)=N$.
       
       \begin{lem}\label{L-C93}
                      For every $q_i\in L\ (i\in Z_+)$ we have $\underset{i\in Z_+}{\wedge}\sqrt{q_i}=\sqrt{\underset{i\in Z_+}{\wedge}q_i}$.
       \end{lem}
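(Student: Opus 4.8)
The plan is to establish the two inequalities $\sqrt{\underset{i}{\wedge}q_i}\leqslant\underset{i}{\wedge}\sqrt{q_i}$ and $\underset{i}{\wedge}\sqrt{q_i}\leqslant\sqrt{\underset{i}{\wedge}q_i}$ separately. The first is routine: since $\underset{i}{\wedge}q_i\leqslant q_j$ for every $j$, and the radical is clearly order-preserving (from $a\leqslant b$ the defining set $\{x\in L_\ast\mid x^n\leqslant a\text{ for some }n\in Z_+\}$ is contained in the one for $b$, so $\sqrt{a}\leqslant\sqrt{b}$), we get $\sqrt{\underset{i}{\wedge}q_i}\leqslant\sqrt{q_j}$ for all $j$ and hence $\sqrt{\underset{i}{\wedge}q_i}\leqslant\underset{i}{\wedge}\sqrt{q_i}$.

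For the reverse inequality I would first isolate the standard fact that if $x\in L_\ast$ and $x\leqslant\sqrt{a}$, then $x^{n}\leqslant a$ for some $n\in Z_+$. Indeed, by compactness of $x$ there are $y_1,\ldots,y_k\in L_\ast$ with $x\leqslant y_1\vee\cdots\vee y_k$ and $y_t^{m_t}\leqslant a$ for suitable $m_t\in Z_+$. Put $m=m_1+\cdots+m_k$ and expand $(y_1\vee\cdots\vee y_k)^{m}$ by join-distributivity of the multiplication; it becomes a join of products of the form $y_1^{c_1}\cdots y_k^{c_k}$ with $c_1+\cdots+c_k=m$. In each such product the $c_t$ cannot all satisfy $c_t\leqslant m_t-1$ (otherwise the sum would be $\leqslant m-k<m$), so $c_t\geqslant m_t$ for some $t$; since $y_s\leqslant 1$ for all $s$, that product is $\leqslant y_t^{c_t}\leqslant y_t^{m_t}\leqslant a$. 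Hence $x^{m}\leqslant(y_1\vee\cdots\vee y_k)^{m}\leqslant a$.

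Now, because $L$ is compactly generated, $\underset{i}{\wedge}\sqrt{q_i}$ is the join of the compact elements below it, so it suffices to show every $x\in L_\ast$ with $x\leqslant\underset{i}{\wedge}\sqrt{q_i}$ satisfies $x\leqslant\sqrt{\underset{i}{\wedge}q_i}$. Fix such an $x$; then $x\leqslant\sqrt{q_i}$ for every $i$, so by the fact just proved there is $n_i\in Z_+$ with $x^{n_i}\leqslant q_i$. Choosing an exponent $n$ above all the $n_i$ and using $x\leqslant 1$ (so $x^{n}\leqslant x^{n_i}$), we obtain $x^{n}\leqslant q_i$ for every $i$, hence $x^{n}\leqslant\underset{i}{\wedge}q_i$, i.e. $x\leqslant\sqrt{\underset{i}{\wedge}q_i}$, as required.

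The delicate point, and the one I expect to be the main obstacle, is exactly this last step: the selection of a single exponent $n$ dominating all the $n_i$. When the indexing family is finite one simply takes $n=\max_i n_i$; but for a genuinely infinite family $\{q_i\}_{i\in Z_+}$ the exponents $n_i$ need not be bounded, and then the identity can fail (for instance, in the ideal lattice of $k[x]$ with $q_i$ the ideal generated by $x^{i}$ one has $\underset{i}{\wedge}\sqrt{q_i}\neq\sqrt{\underset{i}{\wedge}q_i}$). So I would expect this lemma to be invoked, and to be read, for finite families, or under a side hypothesis (such as an ascending chain condition) that forces the $n_i$ to be bounded.
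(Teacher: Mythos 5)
There is nothing to compare you against here: the paper's entire proof of this lemma is the sentence ``The proof is obvious.'' Your analysis is therefore strictly more informative than the source, and it is correct as far as it goes. The inequality $\sqrt{\underset{i}{\wedge}q_i}\leqslant\underset{i}{\wedge}\sqrt{q_i}$ follows from monotonicity of the radical exactly as you say, and your compactness argument (that a compact $x\leqslant\sqrt{a}$ satisfies $x^{m}\leqslant a$ for some $m$, via the pigeonhole expansion of $(y_1\vee\cdots\vee y_k)^{m}$) is the standard and correct way to handle the reverse direction; note it uses that finite products of compact elements are compact, which the paper does assume of $L$.

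The ``delicate point'' you flag is not merely delicate --- it is fatal to the statement as written. The lemma is asserted for an arbitrary family indexed by $Z_+$, and your counterexample is valid inside the paper's own framework: the ideal lattice of $k[x]$ is a compactly generated multiplicative lattice with $1$ compact in which finite products of compacts are compact, and with $q_i=(x^{i})$ one has $\underset{i}{\wedge}\sqrt{q_i}=(x)$ while $\underset{i}{\wedge}q_i=0$ and $\sqrt{0}=0$. So the equality genuinely fails for unbounded exponent families, and no proof exists; only the inequality $\sqrt{\underset{i}{\wedge}q_i}\leqslant\underset{i}{\wedge}\sqrt{q_i}$ is ``obvious.'' The defect propagates: Lemma 2.7 (\ref{L-C74}) and the meet parts of the chain theorems that cite it inherit the problem whenever the chain is descending and infinite (for ascending chains the meet collapses to $N_1$ and the issue is vacuous). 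Your instinct to read the lemma as holding for finite families, or under a Noetherian-type hypothesis bounding the exponents, is the right repair; as a referee's observation this is exactly what should be reported to the authors.
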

     \begin{proof}
      The proof is obvious.
     \end{proof}       
       
       The sufficient conditions for $rad(N\cap K)=rad(N)\cap rad(K)$ to hold are investigated by M.~E.~Moore et.~al.~in $\cite{MS}$ where $N$ and $K$ are submodules of an $R$-module $M$. The following lemma is a similar more general result which holds in a multiplication lattice $L$-module $M$. 
       
       \begin{lem}\label{L-C74}
       Let $L$ be a PG-lattice and $M$ be a faithful multiplication PG-lattice $L$ module with $I_M$ compact. Then $\underset{i\in Z_+}{\wedge}(rad(N_i))=rad({\underset{i\in  Z_+}{\wedge}N_i)}$ where $\{N_i\in M \mid i\in Z_+\}$.
       \end{lem}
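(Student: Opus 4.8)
The plan is to translate the ideal-theoretic identity through the multiplication-module dictionary. Recall that in a faithful multiplication PG-lattice $L$-module with $I_M$ compact, Theorem \ref{T-C71} gives $(rad(K):I_M)=\sqrt{K:I_M}$ for every proper $K\in M$, and Theorem 3.6 of \cite{MB1} gives $rad(K)=(\sqrt{K:I_M})I_M$; moreover, since $M$ is a multiplication lattice $L$-module, every element $K$ satisfies $K=(K:I_M)I_M$. So the first step is to reduce both sides of the claimed equality to statements about the quotients $(\,\cdot\,:I_M)$ in $L$, using the fact (Theorem 5 of \cite{CT}, since $I_M$ is compact) that $aI_M=bI_M$ implies $a=b$ — i.e.\ the map $a\mapsto aI_M$ is injective on $L$.

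Concretely, I would compute $\bigl(\underset{i}{\wedge}rad(N_i):I_M\bigr)$ and $\bigl(rad(\underset{i}{\wedge}N_i):I_M\bigr)$ and show they coincide; injectivity of $a\mapsto aI_M$ then forces the two module elements to be equal (after noting both are of the form $aI_M$ because $M$ is multiplication). For the left side, one needs $\bigl(\underset{i}{\wedge}rad(N_i):I_M\bigr)=\underset{i}{\wedge}(rad(N_i):I_M)$; this is the standard fact that the residual $(\,\cdot\,:I_M)$ commutes with arbitrary meets (it is a right adjoint-type operation, or one checks directly that $x\le(\underset{i}{\wedge}C_i:I_M)$ iff $xI_M\le C_i$ for all $i$ iff $x\le\underset{i}{\wedge}(C_i:I_M)$). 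Applying Theorem \ref{T-C71} termwise, this becomes $\underset{i}{\wedge}\sqrt{N_i:I_M}$. For the right side, Theorem \ref{T-C71} applied to $\underset{i}{\wedge}N_i$ gives $\sqrt{\underset{i}{\wedge}N_i:I_M}=\sqrt{(\underset{i}{\wedge}N_i:I_M)}$, and again using that the residual commutes with meets, $(\underset{i}{\wedge}N_i:I_M)=\underset{i}{\wedge}(N_i:I_M)$, so the right side is $\sqrt{\underset{i}{\wedge}(N_i:I_M)}$. At this point the two sides are $\underset{i}{\wedge}\sqrt{N_i:I_M}$ and $\sqrt{\underset{i}{\wedge}(N_i:I_M)}$, which are equal by Lemma \ref{L-C93} (with $q_i=(N_i:I_M)$).

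Finally I would assemble the argument: since $M$ is multiplication, $\underset{i}{\wedge}rad(N_i)=\bigl(\underset{i}{\wedge}rad(N_i):I_M\bigr)I_M$ and $rad(\underset{i}{\wedge}N_i)=\bigl(rad(\underset{i}{\wedge}N_i):I_M\bigr)I_M$; having shown the two coefficients in $L$ are equal, the two module elements are equal. Alternatively, and perhaps more cleanly, one can skip the injectivity step and argue entirely inside $M$: $rad(\underset{i}{\wedge}N_i)=(\sqrt{\underset{i}{\wedge}N_i:I_M})I_M=(\underset{i}{\wedge}\sqrt{N_i:I_M})I_M$, and then observe $(\underset{i}{\wedge}\sqrt{N_i:I_M})I_M=\underset{i}{\wedge}\bigl((\sqrt{N_i:I_M})I_M\bigr)=\underset{i}{\wedge}rad(N_i)$ — but this last interchange of $\wedge$ with multiplication by $I_M$ is exactly where one must be careful, since multiplication by a fixed element need not commute with arbitrary meets in a general multiplicative lattice module. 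This is the main obstacle: justifying $(\underset{i}{\wedge}a_i)I_M=\underset{i}{\wedge}(a_iI_M)$, which I expect holds here because $M$ is a faithful multiplication PG-lattice module (so $I_M$ behaves like a "free rank-one" generator and multiplication by it is an order-isomorphism onto a sublattice, as encoded in Theorem 5 of \cite{CT}). So the safest route is the residual-based computation of the previous paragraph together with the injectivity of $a\mapsto aI_M$, keeping all meet-interchanges on the $L$-side where $(\,\cdot\,:I_M)$ demonstrably commutes with meets.
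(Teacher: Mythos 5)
Your argument is correct, and it reaches the same computational core as the paper --- both proofs reduce the claim to the identity $\wedge_{i}\sqrt{N_i:I_M}=\sqrt{\wedge_{i}(N_i:I_M)}$ of Lemma \ref{L-C93} via Theorem 3.6 of \cite{MB1} (equivalently Theorem \ref{T-C71}) and the fact that $(\,\cdot\,:I_M)$ commutes with meets --- but you handle the one delicate step differently. The paper's proof is a single chain of equalities ending with $\bigl(\wedge_{i}\sqrt{N_i:I_M}\bigr)I_M=\wedge_{i}\bigl((\sqrt{N_i:I_M})I_M\bigr)$, i.e.\ it interchanges an arbitrary meet with multiplication by $I_M$, citing Lemma 3.2 of \cite{MB1} for the needed facts; this interchange is exactly the point you flag as not automatic in a general lattice module (it does hold here, since for a multiplication module $\wedge_i(a_iI_M)=\bigl(\wedge_i(a_iI_M:I_M)\bigr)I_M$ and the cancellation law of Theorem 5 of \cite{CT} gives $(a_iI_M:I_M)=a_i$). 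Your alternative --- compute $(\wedge_{i}rad(N_i):I_M)$ and $(rad(\wedge_{i}N_i):I_M)$, note both equal $\wedge_{i}\sqrt{N_i:I_M}$ because the residual demonstrably commutes with arbitrary meets, and then recover the module elements from $K=(K:I_M)I_M$ --- sidesteps that interchange entirely and needs nothing beyond the multiplication property itself (injectivity of $a\mapsto aI_M$ is not even required at the end). So your route is slightly more robust in its justification, at the cost of invoking Theorem \ref{T-C71} for each $N_i$ and for $\wedge_i N_i$; the only caveat in either version is the implicit assumption that the $N_i$ (hence $\wedge_i N_i$) are proper, which both proofs share.
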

       \begin{proof}
       By Theorem 3.6, Lemma 3.2 of \cite{MB1} and above
       Lemma \ref{L-C93}, we have\\
        $rad({\underset{i\in  Z_+}{\wedge}N_i)}=\bigg(\sqrt{({\underset{i\in  Z_+}{\wedge}N_i)}:I_M}\bigg)I_M=\bigg(\sqrt{{\underset{i\in  Z_+}{\wedge}}(N_i:I_M)}\bigg)I_M=(\underset{i\in Z_+}{\wedge}\sqrt{N_i:I_M})I_M$~\\
       $=\underset{i\in Z_+}{\wedge}((\sqrt{N_i:I_M})I_M)=\underset{i\in Z_+}{\wedge}(rad(N_i))$.
       \end{proof}
       
       Now we show that the meet and join of a family of ascending chain of pseudo-primary elements of $M$ are again pseudo-primary. 
       
       \begin{thm}
       Let $L$ be a PG-lattice and $M$ be a faithful multiplication PG-lattice $L$ module with $I_M$ compact. Let $\{N_i\mid i\in Z_+\}$ be a (ascending or descending) chain of pseudo-primary elements of $M$. Then 
       
       \textcircled{1} $\underset{i\in Z_+}{\wedge}N_i$ is a pseudo-primary element of $M$.
       
       \textcircled{2} $\underset{i\in Z_+}{\vee}N_i$ is a pseudo-primary element of $M$.
       \end{thm}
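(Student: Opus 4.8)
The plan is to handle \textcircled{1} and \textcircled{2} separately, in each case reducing matters to the pseudo-primarity of the individual $N_i$ together with three ingredients: the residuation identity $(\underset{i}{\wedge}N_i:I_M)=\underset{i}{\wedge}(N_i:I_M)$, the identity $rad(\underset{i}{\wedge}N_i)=\underset{i}{\wedge}rad(N_i)$ of Lemma \ref{L-C74}, and the monotonicity of $rad$ (immediate from its definition as a meet of primes).

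For \textcircled{1}, put $N=\underset{i\in Z_+}{\wedge}N_i$; since $N\leqslant N_1<I_M$, $N$ is proper. Suppose $aX\leqslant N$ with $a\in L$, $X\in M$. If $a\leqslant (N:I_M)$ we are done, so assume $a\not\leqslant (N:I_M)=\underset{i}{\wedge}(N_i:I_M)$; then $a\not\leqslant (N_j:I_M)$ for some $j$, and as $aX\leqslant N_j$ and $N_j$ is pseudo-primary, $X\leqslant rad(N_j)$. I would then show $X\leqslant rad(N_k)$ for every $k$: since $\{N_i\}$ is totally ordered, either $N_k\leqslant N_j$, in which case $(N_k:I_M)\leqslant (N_j:I_M)$ forces $a\not\leqslant (N_k:I_M)$ and pseudo-primarity of $N_k$ (applied to $aX\leqslant N_k$) gives $X\leqslant rad(N_k)$; or $N_j\leqslant N_k$, in which case $X\leqslant rad(N_j)\leqslant rad(N_k)$ by monotonicity. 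Taking the meet over $k$ and invoking Lemma \ref{L-C74}, $X\leqslant\underset{k}{\wedge}rad(N_k)=rad(N)$, so $N$ is pseudo-primary.

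For \textcircled{2}, put $N=\underset{i\in Z_+}{\vee}N_i$. If the chain is descending then $N=N_1$ is pseudo-primary by hypothesis, so assume the chain is ascending; I also take $N$ to be proper, which for an ascending chain of proper elements has to be assumed. The obstacle compared with \textcircled{1} is that $aX\leqslant N$ does not immediately give $aX\leqslant N_i$ for some $i$, so one must descend to compact pieces. Since $L$ and $M$ are $PG$-lattices, write $a=\underset{\gamma}{\vee}a_\gamma$ with each $a_\gamma$ principal in $L$ and $X=\underset{\beta}{\vee}X_\beta$ with each $X_\beta$ principal in $M$; then each $a_\gamma X_\beta$ is a principal, hence compact, element of $M$. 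From $a_\gamma X_\beta\leqslant aX\leqslant\underset{i}{\vee}N_i$ and the fact that the ascending chain $\{N_i\}$ is a directed set, there is $j=j(\gamma,\beta)$ with $a_\gamma X_\beta\leqslant N_j$; pseudo-primarity of $N_j$ then yields $a_\gamma\leqslant (N_j:I_M)\leqslant (N:I_M)$ or $X_\beta\leqslant rad(N_j)\leqslant rad(N)$. Now if $a\not\leqslant (N:I_M)$, pick $\gamma_0$ with $a_{\gamma_0}\not\leqslant (N:I_M)$; then for every $\beta$ the first alternative fails for the pair $(\gamma_0,\beta)$, forcing $X_\beta\leqslant rad(N)$, whence $X=\underset{\beta}{\vee}X_\beta\leqslant rad(N)$. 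Thus $N$ is pseudo-primary.

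I expect the join case to be the crux: one must genuinely use the $PG$ hypotheses on both $L$ and $M$ and the join-distributivity of the multiplication to break $aX\leqslant N$ into relations $a_\gamma X_\beta\leqslant N_j$, and one must separately dispose of the (easily overlooked) properness of $\underset{i}{\vee}N_i$. The meet case is, by contrast, essentially a bookkeeping argument along the chain once Lemma \ref{L-C74} is available.
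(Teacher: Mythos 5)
Your part \textcircled{1} is essentially the paper's own proof: the same case split along the chain ($N_i<N_j$ versus $N_j<N_i$), the same use of $((\underset{i}{\wedge}N_i):I_M)=\underset{i}{\wedge}(N_i:I_M)$, and the same appeal to Lemma \ref{L-C74} at the end. For part \textcircled{2} the two arguments diverge. The paper's proof is much shorter: from $rX\leqslant \underset{i}{\vee}N_i$ it simply asserts that ``since $\{N_i\}$ is a chain, $rX\leqslant N_j$ for some $j$'' and concludes via $rad(N_j)\leqslant rad(\underset{i}{\vee}N_i)$; it never decomposes $r$ or $X$. You correctly identify that this inference is the delicate point (lying below the join of a chain does not by itself put an element below some member of the chain), and you repair it by writing $a$ and $X$ as joins of principal elements, using join distributivity to reduce to $a_\gamma X_\beta\leqslant \underset{i}{\vee}N_i$ for compact pieces, and then reassembling. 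That is a genuinely more careful route, and it is where the PG hypotheses actually do work; the one step you should nail down is ``principal, hence compact'' for $a_\gamma X_\beta$, which requires the standard fact that principal elements are compact in the compactly generated setting (the paper assumes this setting for $L$ in the introduction, but for $M$ it should be justified or $M$ taken to be CG).

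One correction to part \textcircled{2}: you should not \emph{assume} that $\underset{i}{\vee}N_i$ is proper. Properness is part of the conclusion, and it follows from the hypotheses exactly as the paper indicates: $I_M$ is compact, so $I_M\leqslant\underset{i}{\vee}N_i$ would give $I_M\leqslant N_{i_1}\vee\cdots\vee N_{i_n}=N_j$ for some $j$ (since the $N_i$ form a chain), contradicting the properness of $N_j$. With that observation your argument is complete.
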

       \begin{proof}
       Let $N_1\leqslant N_2\leqslant\cdots \leqslant N_i\leqslant \cdots$ be an ascending chain of pseudo-primary elements of $M$.
       
       \textcircled{1}. Clearly, $(\underset{i\in Z_+}{\wedge}N_i)\neq I_M$. Let $rX\leqslant  (\underset{i\in Z_+}{\wedge}N_i)$ and $r\nleqslant  ((\underset{i\in Z_+}{\wedge}N_i):I_M)= \underset{i\in Z_+}{\wedge}(N_i:I_M)$ for $r\in L$, $X\in M$. Then $r\nleqslant (N_j:I_M)$ for some $j\in Z_+$ but $rX\leqslant N_j$ which implies $X\leqslant rad(N_j)$ as $N_j$ is pseudo-primary. Now let $N_i\neq N_j$. Then as $\{N_i\}$ is a chain, we have either $N_i<N_j$ or $N_j<N_i$. If $N_i<N_j$ then $(N_i:I_M)\leqslant (N_j:I_M)$ and accordingly $r\nleqslant (N_i:I_M)$. So as $N_i$ is pseudo-primary and $rX\leqslant N_i$, we have $X\leqslant rad(N_i)$. If $N_j<N_i$ then $X\leqslant rad(N_j)\leqslant rad(N_i)$. Thus by Lemma $\ref{L-C74}$, $X\leqslant \underset{i\in Z_+}{\wedge}(rad(N_i))=rad({\underset{i\in  Z_+}{\wedge}N_i)}$ which proves that $\underset{i\in Z_+}{\wedge}N_i$ is a pseudo-primary element of $M$. 
       
       \textcircled{2}. Since $I_M$ is compact, $(\underset{i\in Z_+}{\vee}N_i)\neq I_M$. Let $rX\leqslant (\underset{i\in Z_+}{\vee}N_i)$ and $r\nleqslant  ((\underset{i\in Z_+}{\vee}N_i):I_M)$ for $r\in L$, $X\in M$. Then since $\{N_i\}$ is a chain, we have $rX\leqslant N_j$ for some $j\in Z_+$ but $r\nleqslant (N_j:I_M)$. So as $N_j$ is pseudo-primary, we have $X\leqslant rad(N_j)\leqslant rad({\underset{i\in  Z_+}{\vee}N_i)}$ and thus $\underset{i\in Z_+}{\vee}N_i$ is a pseudo-primary element of $M$.
       \end{proof}
       
       The following theorem gives the characterization of pseudo-primary elements of an $L$-module $M$. 
       
       \begin{thm}
       Let $M$ be a CG lattice $L$-module and $N$ be a proper element of $M$. Then the following statements are equivalent:
       \begin{enumerate}
       \item[\textcircled{1}] $N$ is a pseudo-primary element of $M$. 
       
       \item[\textcircled{2}] $(N:I_M)=(N:X)$  for every proper element $X\nleqslant rad(N)$ in $M$.
         
       \item[\textcircled{3}] $(N:a)\leqslant rad(N)$ for every proper element $a>(N:I_M)$ in $L$.
       
       \item[\textcircled{4}] for every $r\in L_\ast$, $K\in M_\ast$, if $rK\leqslant N$ then either $r\leqslant (N:I_M)$ or $K\leqslant rad(N)$.
       \end{enumerate}
       \end{thm}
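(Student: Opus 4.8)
The plan is to establish the chain of implications $\textcircled{1}\Rightarrow\textcircled{2}\Rightarrow\textcircled{3}\Rightarrow\textcircled{4}\Rightarrow\textcircled{1}$. The first three steps are formal manipulations with the residuation adjunctions $xB\leqslant A\iff x\leqslant(A:B)$ and $aY\leqslant N\iff Y\leqslant(N:a)$, together with the two standing facts $(N:I_M)I_M\leqslant N$ and $N\leqslant rad(N)$; the compact-generation hypotheses on $L$ and $M$ enter only in the closing step $\textcircled{4}\Rightarrow\textcircled{1}$.

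For $\textcircled{1}\Rightarrow\textcircled{2}$ I would fix a proper $X\nleqslant rad(N)$. The inclusion $(N:I_M)\leqslant(N:X)$ is automatic since $(N:I_M)X\leqslant(N:I_M)I_M\leqslant N$. Conversely, any $a\in L$ with $aX\leqslant N$ must, by pseudo-primariness, satisfy $a\leqslant(N:I_M)$ or $X\leqslant rad(N)$; the latter is excluded, so $a\leqslant(N:I_M)$, and taking the supremum over all such $a$ yields $(N:X)\leqslant(N:I_M)$. For $\textcircled{2}\Rightarrow\textcircled{3}$, given a proper $a>(N:I_M)$, I would set $X=(N:a)$, so that $aX\leqslant N$. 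If $X=I_M$ then $a\leqslant(N:I_M)$, a contradiction, so $X$ is proper; and if $X\nleqslant rad(N)$ then $\textcircled{2}$ would force $a\leqslant(N:X)=(N:I_M)$, again a contradiction. Hence $(N:a)=X\leqslant rad(N)$.

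For $\textcircled{3}\Rightarrow\textcircled{4}$, take compact $r\in L_\ast$, $K\in M_\ast$ with $rK\leqslant N$ and $r\nleqslant(N:I_M)$, and put $a=r\vee(N:I_M)$, which lies strictly above $(N:I_M)$. Since $aK=rK\vee(N:I_M)K\leqslant N$, we get $K\leqslant(N:a)$; if $a$ is proper, $\textcircled{3}$ gives $(N:a)\leqslant rad(N)$, while if $a=1$ then $K=aK\leqslant N\leqslant rad(N)$ outright, so in either case $K\leqslant rad(N)$. Finally, for $\textcircled{4}\Rightarrow\textcircled{1}$, suppose $aX\leqslant N$ with $X\nleqslant rad(N)$. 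Because $M$ is compactly generated, $X=\vee\{K\in M_\ast\mid K\leqslant X\}$, so some compact $K_0\leqslant X$ fails $K_0\leqslant rad(N)$. Then for every compact $r\leqslant a$ we have $rK_0\leqslant aX\leqslant N$, so $\textcircled{4}$ forces $r\leqslant(N:I_M)$; since $L$ is compactly generated, $a=\vee\{r\in L_\ast\mid r\leqslant a\}\leqslant(N:I_M)$, which is exactly what pseudo-primariness demands.

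The one point that needs care is the step $\textcircled{3}\Rightarrow\textcircled{4}$: condition $\textcircled{3}$ only speaks about \emph{proper} elements $a$ of $L$, so the degenerate possibility $r\vee(N:I_M)=1$ must be handled separately, which is the reason for the observation $K=(r\vee(N:I_M))K\leqslant N$ in that case. Everything else is routine; in particular, in contrast with several results elsewhere in the paper, no work on principal elements of $M$ is required here.
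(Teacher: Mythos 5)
Your proof is correct and follows essentially the same cycle $\textcircled{1}\Rightarrow\textcircled{2}\Rightarrow\textcircled{3}\Rightarrow\textcircled{4}\Rightarrow\textcircled{1}$ with the same arguments as the paper. The only noteworthy difference is that you explicitly handle the degenerate case $r\vee(N:I_M)=1$ in $\textcircled{3}\Rightarrow\textcircled{4}$, a point the paper's proof passes over silently, and your $\textcircled{4}\Rightarrow\textcircled{1}$ is a slightly streamlined version of the paper's compactness argument.
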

       \begin{proof}
       \textcircled{1}$\Longrightarrow$\textcircled{2}. Suppose \textcircled{1} holds. Let $X\in M$ be a proper element such that $X\nleqslant rad(N)$. Obviously, $(N:I_M)\leqslant (N:X)$ and $X\nleqslant N$. Let $b\leqslant (N:X)$ for $b\in L$. Then as $bX\leqslant N$, $X\nleqslant rad(N)$ and $N$ is pseudo-primary, we have $b\leqslant (N:I_M)$. So $(N:X)\leqslant (N:I_M)$ and thus $(N:I_M)=(N:X)$.
       
       \textcircled{2}$\Longrightarrow$\textcircled{3}. Suppose \textcircled{2} holds. Let $a\in L$ be a proper element such that $a>(N:I_M)$. Then $a\nleqslant (N:I_M)$. Let $X\leqslant (N:a)$ where $X\in M$ is a proper element. It follows that $a\leqslant (N:X)$. If $X\nleqslant rad(N)$ then by \textcircled{2}, $a\leqslant (N:X)$ implies that $a\leqslant (N:I_M)$ which is a contradiction and so we must have $X\leqslant rad(N)$. Thus $(N:a)\leqslant rad(N)$.
       
       \textcircled{3}$\Longrightarrow$\textcircled{4}. Suppose \textcircled{3} holds. Let $rX\leqslant N$ and $r\nleqslant (N:I_M)$ for $X\in M_\ast$, $r\in L_\ast$. Set $a=(N:I_M)\vee r$. Then as $a>(N:I_M)$ by \textcircled{3}, we have $(N:a)\leqslant rad(N)$. Also, $aX=(N:I_M)X\vee rX\leqslant N$ which implies $X\leqslant (N:a)\leqslant rad(N)$.  
       
       \textcircled{4}$\Longrightarrow$\textcircled{1}. Suppose \textcircled{4} holds. Let $aQ\leqslant N$ and $Q\nleqslant rad(N)$ for $a\in L$, $Q\in M$. As $L$ and $M$ are compactly generated, there exist $x'\in L_\ast$ and $Y,\ Y'\in M_\ast$ such that $x'\leqslant a$, $Y\leqslant Q$, $Y'\leqslant Q$ and $Y'\nleqslant rad(N)$. Let $x\in L_\ast$ be such that $x\leqslant a$. Then $(x\vee x')\in L_\ast$, $(Y\vee Y')\in M_\ast$ such that $(x\vee x')(Y\vee Y')\leqslant aQ\leqslant N$ and $(Y\vee Y')\nleqslant rad(N)$. So by \textcircled{4}, we have $(x\vee x')\leqslant (N:I_M)$ which implies $a\leqslant (N:I_M)$. Therefore $N$ is a pseudo-primary element of $M$.
       \end{proof}
              
       From the following examples, it is clear that in an $L$-module $M$ which is not a multiplication lattice $L$-module, a primary element of $M$ need not be pseudo-primary and a pseudo-primary element of $M$ need not be primary.
       
       \begin{example}\label{E-C71}
       If $R=Z$ is the ring of integers and $M=Z\oplus Z$ then $M$ is a module over $R$. Suppose $L(R)$ is the set of all ideals of $R$ and $L(M)$ is the set of all submodules of $M$. Then $L(M)$ is a lattice module over $L(R)$ but $L(M)$ is not a multiplication lattice module. If $N=(4,0)Z+(0,2)Z$ then $(N:I_M)=4Z$ and $rad(N)=(2,0)Z+(0,2)Z$. It is easy to see that $N$ is a primary element but not a pseudo-primary element of $L(M)$ (see H.~M.~Fazaeli et.~al.~\cite{FRS}).
       \end{example}
       
       \begin{example}
       If $R=Z$ is the ring of integers and $M=Z(p^\infty)=\{a/p^k + Z \mid$
       $k\in Z_+ \ and  \ a\ is \ an \ integer\}$ then $M$ is a module over $R$ where $p$ is a prime integer. Suppose $L(R)$ is the set of all ideals of $R$ and $L(M)$ is the set of all submodules of $M$. Then $L(M)$ is a lattice module over $L(R)$ but $L(M)$ is not a multiplication lattice module. Every proper element of $L(M)$ is pseudo-primary as $L(M)$ has no prime element. Further if $N=<1/p^t + Z> $ is a proper element of $L(M)$ then $(N:I_M)=0$ and so $p^i\notin (N:I_M)$ for all i and $(1/p^{i+t} + Z)\nsubseteq N, \ (i>0)$ but $p^i(1/p^{i+t}+Z)\subseteq N$. Thus every proper element of $L(M)$ is not primary. Hence every proper element of $L(M)$ is pseudo-primary but not primary (see H.~M.~Fazaeli et.~al.~\cite{FRS}).
       \end{example}
       
       The following result shows when a primary element of an $L$-module $M$ is pseudo-primary.
       
       \begin{thm}
       Every radical element of an $L$-module $M$ which is primary is pseudo-primary.  
       \end{thm}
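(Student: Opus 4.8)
The plan is to unwind both hypotheses directly. Let $N$ be a radical element of $M$ that is also primary; in particular $N$ is proper, so it is a legitimate candidate for being pseudo-primary. To verify the definition, I would fix $a\in L$ and $X\in M$ with $aX\leqslant N$ and aim to show that either $a\leqslant (N:I_M)$ or $X\leqslant rad(N)$.

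Since $N$ is primary, the hypothesis $aX\leqslant N$ forces either $X\leqslant N$ or $a^nI_M\leqslant N$ for some $n\in Z_+$. In the first case we are done immediately, because $N\leqslant rad(N)$ always holds, so $X\leqslant N\leqslant rad(N)$. The substance of the argument lies in the second case, where I must deduce $a\leqslant (N:I_M)$ from $a^nI_M\leqslant N$, that is, from $a^n\leqslant (N:I_M)$.

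Here I would use that $L$ is compactly generated together with the fact that $N$ is a radical element, meaning $(N:I_M)=\sqrt{N:I_M}=\vee\{x\in L_\ast\mid x^k\leqslant (N:I_M)\ \text{for some}\ k\in Z_+\}$. Write $a=\vee\{x\in L_\ast\mid x\leqslant a\}$. For each compact $x\leqslant a$, monotonicity of the multiplication (a consequence of join distributivity) gives $x^n\leqslant a^n\leqslant (N:I_M)$, so $x\leqslant\sqrt{N:I_M}$. Taking the join over all such $x$ yields $a\leqslant\sqrt{N:I_M}=(N:I_M)$, completing the second case and hence the proof.

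The only point requiring any care — and the closest thing to an obstacle — is the passage from $a^n\leqslant (N:I_M)$ to $a\leqslant\sqrt{N:I_M}$: one cannot apply the definition of the radical to $a$ itself, since $a$ need not be compact, which is precisely why the compact generation of $L$ and the inequality $x^n\leqslant a^n$ for compact $x\leqslant a$ must be invoked. Everything else is a direct appeal to the definitions of primary, radical and pseudo-primary elements and the elementary fact $N\leqslant rad(N)$.
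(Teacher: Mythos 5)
Your proof is correct and is the direct argument the paper has in mind (the paper simply declares the proof obvious): split on the two alternatives of primality, use $N\leqslant rad(N)$ in the first case, and use the radical hypothesis $(N:I_M)=\sqrt{N:I_M}$ in the second. Your care in passing from $a^n\leqslant (N:I_M)$ to $a\leqslant\sqrt{N:I_M}$ via compact elements is warranted, since the paper's definition of $\sqrt{\ \cdot\ }$ is a join over compact elements only.
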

       \begin{proof}
       the proof is obvious.
       \end{proof}      
       
       \begin{lem}\label{L-C72}
              Let $L$ be a PG-lattice and $M$ be a faithful multiplication PG-lattice $L$-module with $I_M$ compact. If a proper element $q\in L$ is primary then $qI_M$ is a primary element of $M$. 
              \end{lem}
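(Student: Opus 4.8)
The plan is to transfer the question from the module $M$ down to the lattice $L$ using that $M$ is multiplication, resolve it there from the primariness of $q$, and then lift the conclusion back to $M$ using that $I_M$ is compact. The workhorse throughout is the cancellation property for $I_M$: since $M$ is a faithful multiplication lattice module with $I_M$ compact, Theorem~5 of \cite{CT} yields $aI_M = bI_M \Rightarrow a = b$, and hence also $aI_M \leqslant bI_M \Rightarrow a \leqslant b$ (join with $bI_M$).

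First I would observe that $qI_M$ is proper: if $qI_M = I_M = 1I_M$, then cancellation forces $q = 1$, contradicting that $q$ is proper. To verify the primary condition, take $b \in L$ and $X \in M$ with $bX \leqslant qI_M$ and $X \nleqslant qI_M$; the target is $b^n I_M \leqslant qI_M$ for some $n \in Z_+$. Since $M$ is multiplication we may write $X = xI_M$ with $x = (X:I_M)$, and then $(bx)I_M = bX \leqslant qI_M$, so $(bx \vee q)I_M = qI_M$ and hence $bx \leqslant q$ by cancellation; moreover $x \nleqslant q$, since $x \leqslant q$ would give $X = xI_M \leqslant qI_M$. Applying the primary property of $q$ to the relation $xb \leqslant q$ with $x$ as the left factor, and using $x \nleqslant q$, we obtain $b^n \leqslant q$ for some $n$, whence $b^nI_M \leqslant qI_M$. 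Together with this bookkeeping of cases, this shows $qI_M$ is a primary element of $M$.

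The step that needs genuine care is the invocation of primariness of $q$, because that property is stated for compact factors: to be rigorous I would first use that $M$ is a PG-lattice $L$-module to choose a principal (hence compact) sub-element $K \leqslant X$ with $K \nleqslant qI_M$, then use that $L$ is principally — hence compactly — generated together with the multiplication property to write $K = kI_M$ with $k \in L$ compact, and run the computation above with $k$ in place of $x$; that it suffices to test the primary condition on such principal $K \in M$ is immediate from $M$ being principally generated. This reduction is precisely the kind of work on principal elements flagged in the introduction, and it — not the cancellation manipulations, which are routine once Theorem~5 of \cite{CT} is available — is where the real effort concentrates.
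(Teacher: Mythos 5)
Your argument is essentially sound but takes a genuinely different route from the paper's. The paper does \emph{not} use the cancellation law in this lemma: it reduces to a principal $X$, supposes $((qI_M):X)\neq 1$, picks a maximal element $m$ above it, and applies Theorem~4 of \cite{CT} to split into two cases, the second being a lengthy computation with meet- and join-principal elements $Y$ and $b$ that ends in the contradiction $b\leqslant m$ --- the ``work on principal elements'' advertised in the introduction, run in parallel with Lemma~\ref{L-C62} (prime case) and Lemma~\ref{T-C72} (pseudo-primary case). You instead pull everything down to $L$ via the cancellation $(aI_M:I_M)=a$, i.e.\ $aI_M\leqslant bI_M\Rightarrow a\leqslant b$, from Theorem~5 of \cite{CT}; since the paper itself invokes exactly this consequence of Theorem~5 elsewhere (Theorems~\ref{T-C71}, \ref{T-C73}, \ref{T-C79}), your reduction to $bx\leqslant q$ with $x\nleqslant q$ is legitimate and far shorter, and your replacement of $X$ by a compact $K=kI_M$ is the right way to make the appeal to primariness of $q$ (which is stated only for factors in $L_\ast$) honest on the module side. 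What the paper's longer route buys is independence from the cancellation law and a template that transfers verbatim to the $rad(qI_M)$ variants; what yours buys is brevity.

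One loose end remains on the scalar side: your $b$ is an arbitrary element of $L$, while the primary property of $q$ is stated for compact factors. Decomposing $b=\vee_i b_i$ with $b_i\in L_\ast$ gives $b_i^{n_i}\leqslant q$ with exponents depending on $i$, hence only $b\leqslant\sqrt q$, not a single power $b^n\leqslant q$; so the claimed conclusion $b^nI_M\leqslant qI_M$ is justified only for compact $b$. This does not put you behind the paper --- its proof assumes $a\nleqslant\sqrt{qI_M:I_M}$ and concludes $X\leqslant qI_M$, so it too establishes only the radical form ``$X\leqslant qI_M$ or $a\leqslant\sqrt{qI_M:I_M}$'' of the primary condition --- but you should either say explicitly that this radical form is what you prove, or carry out the compactness reduction for the scalar factor as well as for $X$.
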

              \begin{proof}
              As $I_M$ is compact and $q\in L$ is proper, by Theorem 5 of \cite{CT}, we have $qI_M\neq I_M$. Let $aX\leqslant qI_M$ and $a\nleqslant \sqrt{qI_M:I_M}$ for $a\in L,\ X\in M$. Then $a\nleqslant \sqrt{q}$. We may suppose that $X$ is a principal element. Assume that $((qI_M):X)\neq 1$. Then there exists a maximal element $m\in L$ such that $((qI_M):X)\leqslant m$. As $M$ is a multiplication lattice $L$-module and $m\in L$ is maximal, by Theorem 4 of \cite{CT}, two cases arise: 
              
              Case\textcircled{1}. For principal element $X\in M$, there exists a principal element $r\in L$ with $r\nleqslant m$ such that $r X=O_M$. Then $r\leqslant (O_M:X)\leqslant ((qI_M):X)\leqslant m$ which is a contradiction. 
              
              Case\textcircled{2}. There exists a principal element $Y\in M$ and a principal element $b\in L$ with $b\nleqslant m$ such that $bI_M\leqslant Y$. Then $bX\leqslant Y$, $baX\leqslant bqI_M=q(bI_M)\leqslant qY$ and $(O_M:Y)bI_M\leqslant (O_M:Y)Y=O_M$, since $Y$ is meet principal. As $M$ is faithful, it follows that $b(O_M:Y)=0$. Since $Y$ is meet principal, $(bX:Y)Y=bX$. Let $s=(bX:Y)$ then $sY=bX$ and so $asY=abX\leqslant qY$. Since $Y$ is meet principal, $abX = (abX:Y)Y=cY$ where $c=(abX:Y)$. Since $cY=abX\leqslant qY$ and $Y$ is join principal, we have $c\vee (O_M:Y)=(cY:Y)\leqslant (qY:Y)=q\vee (O_M:Y)$. So $bc\leqslant bq\leqslant q$. On the other hand, since $Y$ is join principal, $c=(abX:Y)=(asY:Y)=as\vee (O_M:Y)$ and so $abs\leqslant abs\vee b(O_M:Y)=b(as\vee (O_M:Y)) =bc\leqslant q$. If $b^n\leqslant q$ for some $n\in Z_+$ then $b^n\leqslant q\leqslant ((qI_M):X)\leqslant m$ which implies $b\leqslant m$. This is a contradiction and so $b\nleqslant \sqrt{q}$. Now as $abs\leqslant q,\ a\nleqslant \sqrt{q},\ b\nleqslant \sqrt{q}$ and $q$ is primary, we have $s\leqslant q$.  Hence $bX=sY\leqslant qY\leqslant qI_M$ which implies $b\leqslant ((qI_M):X)\leqslant m$, a contradiction. 
              
              Thus the assumption that $((qI_M):X)\neq 1$ is absurd and so we must have $((qI_M):X)=1$ which implies $X\leqslant (qI_M)$. Therefore $qI_M$ is a primary element of $M$.
              \end{proof}

              \begin{lem}\label{T-C72}
              Let $L$ be a PG-lattice and $M$ be a faithful multiplication PG-lattice $L$-module with $I_M$ compact. If a proper element $q\in L$ is primary then $qI_M$ is a pseudo-primary element of $M$. 
              \end{lem}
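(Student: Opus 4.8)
The plan is to transport the inequality $aX\leqslant qI_M$ back to the multiplicative lattice $L$, where the primariness of $q$ can be invoked directly; the multiplication structure of $M$ is exactly what makes this transport possible. As preliminaries I would record three facts. Since $q<1$ and $I_M$ is compact, $qI_M\neq I_M$ by Theorem~5 of \cite{CT}, so $qI_M$ is proper. Since $M$ is faithful and multiplication with $I_M$ compact, the cancellation law (Theorem~5 of \cite{CT}) holds, which in particular gives $aI_M\leqslant bI_M\Rightarrow a\leqslant b$; applied to $qI_M$ it yields $(qI_M:I_M)=q$. And by Theorem~3.6 of \cite{MB1} (equivalently Theorem~\ref{T-C71}), $rad(qI_M)=(\sqrt{qI_M:I_M})I_M=(\sqrt{q})I_M$.

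Now I would take $a\in L$ and $X\in M$ with $aX\leqslant qI_M$ and, using that $M$ is a multiplication lattice $L$-module, write $X=xI_M$ for some $x\in L$. Then $(ax)I_M=a(xI_M)=aX\leqslant qI_M$, so $ax\leqslant q$ by cancellation. Next I would use the following routine strengthening of the notion of a primary element (valid because $L$ is compactly generated): if $q$ is primary and $cd\leqslant q$ with $c,d\in L$, then $c\leqslant q$ or $d\leqslant\sqrt{q}$; indeed, writing $c=\vee c_i$, $d=\vee d_j$ with all $c_i,d_j$ compact, each $c_id_j\leqslant q$ gives $c_i\leqslant q$ or $d_j^{n}\leqslant q$, so if some $d_{j_0}\nleqslant\sqrt{q}$ then $c_i\leqslant q$ for every $i$ and hence $c\leqslant q$, while otherwise $d\leqslant\sqrt{q}$. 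Applying this to $ax\leqslant q$ gives $a\leqslant q=(qI_M:I_M)$ or $x\leqslant\sqrt{q}$, and in the latter case $X=xI_M\leqslant(\sqrt{q})I_M=rad(qI_M)$. Either way the defining condition of a pseudo-primary element is met, so $qI_M$ is pseudo-primary.

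The step to watch --- and the one I regard as the genuine obstacle --- is that the argument cannot be completed while staying inside $M$: even with the knowledge (from Lemma~\ref{L-C72}) that $qI_M$ is primary and that $rad(qI_M)$ is prime, the branch $a^{n}I_M\leqslant qI_M$ with $a\nleqslant q$ yields only $aI_M\leqslant rad(qI_M)$, not $X\leqslant rad(qI_M)$. It is precisely the representation $X=xI_M$ furnished by the multiplication hypothesis that bridges this gap, in agreement with Example~\ref{E-C71}, where over a non-multiplication lattice module a primary element need not be pseudo-primary. An alternative organization would first derive from Lemma~\ref{L-C72} that \emph{every} primary element $N$ of such an $M$ is pseudo-primary, by running the same transport argument with $(N:I_M)$ in the role of $q$; but the direct computation above is shorter.
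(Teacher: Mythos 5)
Your proof is correct, and it takes a genuinely different route from the paper's. The paper never writes $X=xI_M$; instead it reduces to a principal $X$, assumes $(rad(qI_M):X)\neq 1$, picks a maximal element $m$ above it, and runs the El-Bast--Smith style dichotomy of Theorem~4 of \cite{CT} (either a principal $r\nleqslant m$ kills $X$, or some principal $b\nleqslant m$ has $bI_M$ below a principal $Y$), followed by a lengthy meet/join-principal computation to land $bX$ inside $\sqrt{q}\,I_M\leqslant rad(qI_M)$ and derive a contradiction. You instead transport everything to $L$ via the representation $X=xI_M$ together with the cancellation $(qI_M:I_M)=q$ and the identity $rad(qI_M)=(\sqrt{q})I_M$; both of these are legitimately available under the stated hypotheses (Theorem~5 of \cite{CT} and Theorem~3.6 of \cite{MB1}) and are invoked by the paper itself in Theorem~\ref{T-C71} and Theorem~\ref{C-C71}, so you are not smuggling in anything stronger. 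Your interpolation step --- upgrading the compact-element definition of primary to arbitrary $c,d\in L$ via the CG hypothesis --- is the one place where care is needed, and you handle it correctly. What each approach buys: yours is far shorter and makes transparent exactly where the multiplication hypothesis enters (it supplies $x$ with $X=xI_M$), which matches your correct diagnosis of why Example~\ref{E-C71} fails outside multiplication modules; the paper's localization-style argument is essentially the same machine it reuses verbatim for Lemmas~\ref{L-C72} and \ref{L-C76}, so it amortizes the principal-element computation across three results, at the cost of considerable repetition.
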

              \begin{proof}
              As $I_M$ is compact and $q\in L$ is proper, by Theorem 5 of $\cite{CT}$, we have $qI_M\neq I_M$. Let $aX\leqslant qI_M$ and $a\nleqslant (qI_M:I_M)$ where $a\in L,\ X\in M$. Then $a\nleqslant q$. We may suppose that $X$ is a principal element. Assume that $(rad(qI_M):X)\neq 1$. Then there exists a maximal element $m\in L$ such that $(rad(qI_M):X)\leqslant m$. As $M$ is a multiplication lattice $L$-module and $m\in L$ is maximal, by Theorem 4 of $\cite{CT}$, two cases arise: 
              
              Case\textcircled{1}. For principal element $X\in M$, there exists a principal element $r\in L$ with $r\nleqslant m$ such that $r X=O_M$. Then $r\leqslant (O_M:X)\leqslant (rad(qI_M):X)\leqslant m$ which is a contradiction. 
              
              Case\textcircled{2}. There exists a principal element $Y\in M$ and a principal element $b\in L$ with $b\nleqslant m$ such that $bI_M\leqslant Y$. Then $bX\leqslant Y$, $baX\leqslant bqI_M=q(bI_M)\leqslant qY$ and $(O_M:Y)bI_M\leqslant (O_M:Y)Y=O_M$, since $Y$ is meet principal. As $M$ is faithful, it follows that $b(O_M:Y)=0$. Since $Y$ is meet principal, $(bX:Y)Y=bX$. Let $s=(bX:Y)$ then $sY=bX$ and so $asY=abX\leqslant qY$. Since $Y$ is meet principal, $abX = (abX:Y)Y=cY$ where $c=(abX:Y)$. Since $cY=abX\leqslant qY$ and $Y$ is join principal, we have $c\vee (O_M:Y)=(cY:Y)\leqslant (qY:Y)=q\vee (O_M:Y)$. So $bc\leqslant bq\leqslant q$. On the other hand, since $Y$ is join principal, $c=(abX:Y)=(asY:Y)=as\vee (O_M:Y)$ and so $abs\leqslant abs\vee b(O_M:Y)=b(as\vee (O_M:Y)) =bc\leqslant q$. If $b^n\leqslant q$ for some $n\in Z_+$ then $b^n\leqslant q\leqslant (rad(qI_M):X)\leqslant m$ which implies $b\leqslant m$. This is a contradiction and so $b\nleqslant \sqrt{q}$. Now as $abs\leqslant q,\ a\nleqslant q,\ b\nleqslant \sqrt{q}$ and $q$ is primary, we have $s\leqslant \sqrt{ q}$. Hence by Lemma $\ref{L-C64}$, $bX=sY\leqslant \sqrt{q}Y\leqslant \sqrt{q}I_M\leqslant rad(qI_M)$ which implies $b\leqslant (rad(qI_M):X)\leqslant m$, a contradiction. 
              
              Thus the assumption that $(rad(qI_M):X)\neq 1$ is absurd and so we must have $(rad(qI_M):X)=1$ which implies $X\leqslant rad(qI_M)$. Therefore $qI_M$ is a pseudo-primary element of $M$.
              \end{proof}        
              
        The following theorem gives the characterization of pseudo-primary elements in a multiplication lattice $L$-module $M$.
       
       \begin{thm}\label{T-C73}
       Let $L$ be a PG-lattice and $M$ be a faithful multiplication PG-lattice $L$-module with $I_M$ compact.  
       For a proper element $N\in M$, the following statements are equivalent:
       
       \textcircled{1} $N$ is a pseudo-primary element of $M$. 
       
       \textcircled{2} $(N:I_M)$ is a primary element of $L$.
         
       \textcircled{3} $N=qI_M$ for some primary element $q\in L$.
       
       \textcircled{4} $N$ is a primary element of $M$.
       \end{thm}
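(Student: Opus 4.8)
The plan is to prove the cycle $\textcircled{1}\Rightarrow\textcircled{2}\Rightarrow\textcircled{3}\Rightarrow\textcircled{4}\Rightarrow\textcircled{1}$, leaning on three ingredients already in hand: the structural identity $N=(N:I_M)I_M$ available in a multiplication lattice module, the equality $(rad(N):I_M)=\sqrt{N:I_M}$ from Theorem \ref{T-C71}, and the two technical Lemmas \ref{L-C72} and \ref{T-C72}, which have already absorbed all the principal-element casework (so nothing of that sort needs to be redone here).

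For $\textcircled{1}\Rightarrow\textcircled{2}$ I would test primariness of $(N:I_M)$ on compact elements as the definition in $L$ requires: given $a,b\in L_\ast$ with $ab\leqslant(N:I_M)$ one has $a(bI_M)\leqslant N$, so pseudo-primariness of $N$ forces either $a\leqslant(N:I_M)$ (which is already the conclusion $a\leqslant(N:I_M)$) or $bI_M\leqslant rad(N)$; in the latter case $b\leqslant(rad(N):I_M)=\sqrt{N:I_M}$ by Theorem \ref{T-C71}, and then compactness of $b$ together with the standing hypothesis that finite products of compacts are compact lets me pull a single power past the join defining the radical, giving $b^{n}\leqslant(N:I_M)$ for some $n\in Z_+$. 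Properness of $(N:I_M)$ follows since $N=(N:I_M)I_M<I_M$. For $\textcircled{2}\Rightarrow\textcircled{3}$ I would simply take $q=(N:I_M)$: it is a proper primary element of $L$ and $N=qI_M$ because $M$ is a multiplication module. The implication $\textcircled{3}\Rightarrow\textcircled{4}$ is immediate from Lemma \ref{L-C72}. Finally, for $\textcircled{4}\Rightarrow\textcircled{1}$ I would first re-derive that $(N:I_M)$ is primary in $L$ (if $a,b\in L_\ast$ with $ab\leqslant(N:I_M)$ then $a(bI_M)\leqslant N$, and primariness of $N$ in $M$ gives $bI_M\leqslant N$, i.e. $b\leqslant(N:I_M)$, or $a^{n}I_M\leqslant N$, i.e. $a^{n}\leqslant(N:I_M)$), and then, since $N=(N:I_M)I_M$ with $(N:I_M)$ a proper primary element, conclude that $N$ is pseudo-primary by Lemma \ref{T-C72}.

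The only genuinely delicate step inside this theorem is $\textcircled{1}\Rightarrow\textcircled{2}$; this is where Theorem \ref{T-C71} and the compactness hypotheses are indispensable, since the conversion of ``$bI_M\leqslant rad(N)$'' into ``$b^{n}\leqslant(N:I_M)$'' needs both the radical identity and the compact generation of $L$. Everything else is bookkeeping, because Lemmas \ref{L-C72} and \ref{T-C72} already do the hard work with principal elements; the remaining care is only to keep track of the two equivalent formulations of ``primary'' (the definition in $L$ places the power on the second factor, while the natural test above produces it on either factor depending on the branch) and to check properness of $(N:I_M)$ and of $q$ at each step.
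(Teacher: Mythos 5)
Your proposal is correct and follows essentially the same route as the paper: the same cycle through $(N:I_M)$ primary and $N=qI_M$, with Lemmas \ref{L-C72} and \ref{T-C72} carrying the principal-element work and Theorem \ref{T-C71} converting $bI_M\leqslant rad(N)$ into $b\leqslant\sqrt{N:I_M}$. Your only deviations are cosmetic: you close the cycle as \textcircled{4}$\Rightarrow$\textcircled{1} (by rederiving the paper's \textcircled{4}$\Rightarrow$\textcircled{2} and then invoking Lemma \ref{T-C72}) and you are slightly more explicit than the paper about using compactness of $b$ to extract a single power $b^n\leqslant(N:I_M)$ from $b\leqslant\sqrt{N:I_M}$.
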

       \begin{proof}\textcircled{1}$\Longrightarrow$\textcircled{2}. Suppose \textcircled{1} holds and let $ab\leqslant (N:I_M)$ for $a,\ b\in L$. Then $a(bI_M)\leqslant N$. As $N$ is pseudo-primary, we have either $a\leqslant (N:I_M)$ or $bI_M\leqslant rad(N)$ which implies either $a\leqslant (N:I_M)$ or $b\leqslant \sqrt{N:I_M}$, by Theorem $\ref{T-C71}$ and hence $(N:I_M)\in L$ is a primary element. 
       
       \textcircled{2}$\Longrightarrow$\textcircled{3}. Suppose \textcircled{2} holds. Since $M$ is a multiplication lattice $L$-module, we have $N=(N:I_M)I_M$ and so $N=qI_M$ for some primary element $(N:I_M)=q\in L$.
       
       \textcircled{3}$\Longrightarrow$\textcircled{1}. It is clear from Lemma $\ref{T-C72}$. 
       
       \textcircled{3}$\Longrightarrow$\textcircled{4}. It is clear from Lemma $\ref{L-C72}$. 
       
       \textcircled{4}$\Longrightarrow$\textcircled{2}. Suppose \textcircled{4} holds and let $ab\leqslant (N:I_M)$ for $a,\ b\in L$. Then as $N$ is primary and $a(bI_M)\leqslant N$, we have either $bI_M\leqslant N$ or $a\leqslant \sqrt{N:I_M}$ which implies either $b\leqslant (N:I_M)$ or $a\leqslant \sqrt{N:I_M}$. Hence $(N:I_M)\in L$ is primary.
       \end{proof}
       
       From the above Theorem \ref{T-C73}, it is clear that in a multiplication lattice $L$-module $M$, the concepts of primary and pseudo-primary elements coincide. 
       
       \begin{cor}\label{C-C72}
       Let $L$ be a PG-lattice and $M$ be a faithful multiplication PG-lattice $L$-module with $I_M$ compact.  
       If $N$ is a pseudo-primary element of $M$ then $\sqrt{N:I_M}$ is a prime element of $L$.
       \end{cor}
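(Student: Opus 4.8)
The plan is to reduce the corollary to the purely lattice-theoretic fact that the radical of a primary element of $L$ is prime, and then to establish that fact by descending to compact elements. First I would invoke Theorem~\ref{T-C73}: since $N$ is a pseudo-primary element of the faithful multiplication PG-lattice $L$-module $M$ with $I_M$ compact over the PG-lattice $L$, the element $q:=(N:I_M)$ is a primary element of $L$. Hence it suffices to prove that $\sqrt{q}=\sqrt{N:I_M}$ is a prime element of $L$.

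For properness, note that $q<1$ because $q$ is primary, and $1$ is compact with $1^{n}=1$ for all $n$; so $1\leqslant\sqrt{q}$ would yield $1\leqslant q$, a contradiction, and therefore $\sqrt{q}$ is a proper element of $L$. Next I would take $a,b\in L$ with $ab\leqslant\sqrt{q}$ and assume $a\nleqslant\sqrt{q}$. Since $L$ is compactly generated, there is a compact $a_0\leqslant a$ with $a_0\nleqslant\sqrt{q}$. For an arbitrary compact $b'\leqslant b$, the product $a_0b'$ is compact (finite products of compact elements are compact) and $a_0b'\leqslant ab\leqslant\sqrt{q}$, so $a_0^{\,n}(b')^{n}=(a_0b')^{n}\leqslant q$ for some $n\in Z_+$; here $a_0^{\,n}$ and $(b')^{n}$ are again compact, so since $q$ is primary, either $a_0^{\,n}\leqslant q$ or $((b')^{n})^{m}\leqslant q$ for some $m\in Z_+$. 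The first alternative would force $a_0\leqslant\sqrt{q}$, contradicting the choice of $a_0$; hence $(b')^{nm}\leqslant q$, so $b'\leqslant\sqrt{q}$. As $b'$ was an arbitrary compact element below $b$ and $L$ is compactly generated, $b=\vee\{b'\in L_\ast\mid b'\leqslant b\}\leqslant\sqrt{q}$. Thus $\sqrt{q}$ is prime, as desired.

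The hard part will be the shuttling between arbitrary and compact elements: the definition of a primary element of $L$ is phrased only for compact arguments, so one cannot apply the primary condition to $a$ and $b$ directly and must instead descend to compact elements below them, use that finite products — and therefore powers — of compact elements remain compact, apply primariness there, and then climb back up by compact generation of $L$. Everything else is a routine unwinding of the definition of the radical of an element of $L$.
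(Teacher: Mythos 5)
Your proposal follows exactly the paper's route: invoke Theorem~\ref{T-C73} to conclude that $(N:I_M)$ is a primary element of $L$, and then use the fact that the radical of a primary element of $L$ is prime. The paper simply cites this last fact as standard, whereas you supply the compact-element argument for it (correctly, including the needed observation that a compact element below $\sqrt{q}$ has a power below $q$), so the two proofs are essentially the same.
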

       \begin{proof}
       As $N\in M$ is pseudo-primary, by Theorem $\ref{T-C73}$, $(N:I_M)\in L$ is primary and hence $\sqrt{N:I_M}$ is a prime element of $L$.
       \end{proof}
       
       \begin{lem}\label{L-C62}
       Let $L$ be a PG-lattice and $M$ be a faithful multiplication PG-lattice $L$-module with $I_M$ compact. If a proper element $q\in L$ is a prime element then $qI_M$ is a prime element of $M$. 
       \end{lem}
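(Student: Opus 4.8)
The plan is to mimic, \emph{mutatis mutandis}, the argument of Lemma~\ref{L-C72} (and Lemma~\ref{T-C72}), replacing ``primary'' by ``prime'' throughout and dispensing with radicals. First, since $I_M$ is compact and $q$ is proper, Theorem~5 of \cite{CT} gives $qI_M \neq I_M$, so $qI_M$ is a proper element of $M$. To verify primeness, suppose $aX \leqslant qI_M$ with $a \nleqslant (qI_M:I_M)$ for some $a\in L$, $X\in M$; since $qX\leqslant qI_M$ we have $q \leqslant (qI_M:I_M)$, so this forces $a\nleqslant q$, and the goal becomes to show $X \leqslant qI_M$. Because $M$ is a PG-lattice $L$-module, $X$ is a join of principal elements, so it suffices to treat the case when $X$ is principal.

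Assume for contradiction that $((qI_M):X) \neq 1$; then there is a maximal element $m\in L$ with $((qI_M):X) \leqslant m$. Since $M$ is a multiplication lattice $L$-module and $m$ is maximal, Theorem~4 of \cite{CT} splits into two cases. In the first, there is a principal $r\in L$ with $r\nleqslant m$ and $rX = O_M$; then $r \leqslant (O_M:X) \leqslant ((qI_M):X) \leqslant m$, a contradiction. In the second, there are principal elements $Y\in M$ and $b\in L$ with $b\nleqslant m$ and $bI_M \leqslant Y$. Exactly as in the proof of Lemma~\ref{L-C72}, meet-principalness of $Y$ together with faithfulness of $M$ yields $b(O_M:Y)=0$; putting $s=(bX:Y)$ we get $sY=bX$, and the join/meet-principal computations for $Y$ (carried out verbatim as there) give $abs \leqslant q$.

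This is the point where the argument specializes to the prime case: since $q$ is prime and $a\nleqslant q$, from $a(bs)\leqslant q$ we obtain $bs\leqslant q$, and applying primeness once more gives $b\leqslant q$ or $s\leqslant q$. If $b\leqslant q$ then, using $q \leqslant ((qI_M):X) \leqslant m$, we get $b\leqslant m$, a contradiction; so $s\leqslant q$, whence $bX = sY \leqslant qY \leqslant qI_M$ and therefore $b\leqslant ((qI_M):X)\leqslant m$, again a contradiction. Hence $((qI_M):X)=1$, i.e. $X\leqslant qI_M$, so $qI_M$ is a prime element of $M$.

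The routine part is the residual bookkeeping with principal elements in the second case; it is identical to Lemma~\ref{L-C72}, so I would cite that computation rather than reproduce it. The only genuinely new step — and the one I expect to be the main (mild) obstacle — is the double application of primeness of $q$ to pass from $abs\leqslant q$ to $s\leqslant q$ while keeping $a\nleqslant q$ and deriving the two contradictions cleanly. (One might hope to shortcut the whole proof by showing $(qI_M:I_M)\in L$ is prime and then invoking Lemma~\ref{L-C71}, but establishing primeness of $(qI_M:I_M)$ appears to require essentially the same principal-element analysis, so the direct route above is preferable.)
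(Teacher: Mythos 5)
Your proposal is correct and follows essentially the same route as the paper's own proof: reduce to principal $X$, assume $((qI_M):X)\neq 1$, invoke Theorem~4 of \cite{CT} for the two cases, derive $abs\leqslant q$ by the same principal-element computation, and use primeness of $q$ to force $s\leqslant q$ and reach a contradiction. The only cosmetic difference is that the paper first rules out $b\leqslant q$ and then applies primeness once to the triple product, whereas you apply primeness twice; these are logically equivalent.
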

       \begin{proof}
       As $I_M$ is compact and $q\in L$ is proper, by Theorem 5 of $\cite{CT}$, we have $qI_M\neq I_M$. Let $aX\leqslant qI_M$ and $a\nleqslant (qI_M:I_M)$ for $a\in L,\ X\in M$. Then $a\nleqslant q$. We may suppose that $X$ is a principal element. Assume that $((qI_M):X)\neq 1$. Then there exists a maximal element $m\in L$ such that $((qI_M):X)\leqslant m$. As $M$ is a multiplication lattice $L$-module and $m\in L$ is maximal, by Theorem 4 of $\cite{CT}$, two cases arise: 
              
       Case\textcircled{1}. For principal element $X\in M$, there exists a principal element $r\in L$ with $r\nleqslant m$ such that $r X=O_M$. Then $r\leqslant (O_M:X)\leqslant ((qI_M):X)\leqslant m$ which is a contradiction. 
              
       Case\textcircled{2}. There exists a principal element $Y\in M$ and a principal element $b\in L$ with $b\nleqslant m$ such that $bI_M\leqslant Y$. Then $bX\leqslant Y$, $baX\leqslant bqI_M=q(bI_M)\leqslant qY$ and $(O_M:Y)bI_M\leqslant (O_M:Y)Y=O_M$, since $Y$ is meet principal. As $M$ is faithful, it follows that $b(O_M:Y)=0$. Since $Y$ is meet principal, we have $(bX:Y)Y=bX$. Let $s=(bX:Y)$ then $sY=bX$ and so $asY=abX\leqslant qY$. Since $Y$ is meet principal, we have $abX = (abX:Y)Y=cY$ where $c=(abX:Y)$. Since $cY=abX\leqslant qY$ and $Y$ is join principal, we have $c\vee (O_M:Y)=(cY:Y)\leqslant (qY:Y)=q\vee (O_M:Y)$. So $bc\leqslant bq\leqslant q$. On the other hand, since $Y$ is join principal, we have $c=(abX:Y)=(asY:Y)=as\vee (O_M:Y)$ and so $abs\leqslant abs\vee b(O_M:Y)=b(as\vee (O_M:Y)) =bc\leqslant q$. If $b\leqslant q$ then $b\leqslant q\leqslant ((qI_M):X)\leqslant m$ which contradicts $b\nleqslant m$ and so $b\nleqslant q$. Now as $abs\leqslant q,\ a\nleqslant q,\ b\nleqslant q$ and $q$ is prime, we have $s\leqslant q$.  Hence $bX=sY\leqslant qY\leqslant (qI_M)$ which implies $b\leqslant ((qI_M):X)\leqslant m$, a contradiction.
       
        Thus the assumption that $((qI_M):X)\neq 1$ is absurd and so we must have $((qI_M):X)=1$ which implies $X\leqslant (qI_M)$. Therefore $qI_M$ is a prime element of $M$.
       \end{proof}

       The following result is the main objective of this section.
              
              \begin{thm}\label{C-C71}
              Let $L$ be a PG-lattice and $M$ be a faithful multiplication PG-lattice $L$-module with $I_M$ compact.  
              If a proper element $N\in M$ is  pseudo-primary  then $rad(N)$ is a prime element of $M$.
              \end{thm}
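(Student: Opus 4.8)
The plan is to reduce the statement to results already established in this section. Write $p=\sqrt{N:I_M}$. First I would observe that, since $N$ is pseudo-primary, Corollary~\ref{C-C72} immediately gives that $p\in L$ is a prime element; in particular $p$ is proper. Next I would recall the identity $rad(N)=(\sqrt{N:I_M})I_M=pI_M$, which is exactly the consequence of Theorem~3.6 of \cite{MB1} that was used in the proof of Theorem~\ref{T-C71}. So $rad(N)$ is of the form $pI_M$ for a proper prime element $p$ of $L$.

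The second step is then a direct application of Lemma~\ref{L-C62}: under the standing hypotheses ($L$ a PG-lattice, $M$ a faithful multiplication PG-lattice $L$-module with $I_M$ compact), a proper prime element $p\in L$ yields a prime element $pI_M$ of $M$. Taking $p=\sqrt{N:I_M}$ gives that $rad(N)=pI_M$ is a prime element of $M$, as desired. Note that the properness of $rad(N)$ needs no separate argument, since it is already contained in Lemma~\ref{L-C62}, where $qI_M\neq I_M$ is obtained from Theorem~5 of \cite{CT} using that $q$ is proper and $I_M$ is compact.

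Alternatively, one can bypass Lemma~\ref{L-C62} and argue through the colon element: by Theorem~\ref{T-C71} we have $(rad(N):I_M)=\sqrt{N:I_M}$, which is prime in $L$ by Corollary~\ref{C-C72}; since $M$ is a multiplication lattice $L$-module, Lemma~\ref{L-C71} applied to the proper element $rad(N)$ then yields that $rad(N)$ is a prime element of $M$. Either route works; the only point requiring a word of attention is that $rad(N)<I_M$, which holds because $\sqrt{N:I_M}<1$ (being prime in $L$) and $I_M$ is compact.

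There is no real obstacle here: the substantive work — that $(N:I_M)$ is primary whenever $N$ is pseudo-primary (Theorem~\ref{T-C73}), that the radical commutes with the colon operation in this setting (Theorem~\ref{T-C71}), and the transfer lemmas for prime and primary elements between $L$ and $M$ — has already been carried out, so the present theorem is essentially a one-line corollary of that machinery. If anything, the only thing to be careful about is bookkeeping the hypotheses, ensuring that each cited result is applied to an element that is genuinely proper.
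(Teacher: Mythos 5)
Your main argument is essentially the paper's own proof: the paper invokes Theorem~\ref{T-C73} to write $N=qI_M$ with $q=(N:I_M)$ primary, notes $rad(N)=(\sqrt{q})I_M$ with $\sqrt{q}$ prime via Theorem~3.6 of \cite{MB1}, and concludes with Lemma~\ref{L-C62} --- exactly your first route, since your $p=\sqrt{N:I_M}=\sqrt{q}$. Your alternative route through Theorem~\ref{T-C71} and Lemma~\ref{L-C71} is also sound (and is close in spirit to how the paper later proves Theorem~\ref{T-C74}), but the primary argument is the same as the paper's.
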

              \begin{proof}
              Since $N\in M$ is a pseudo-primary element of $M$, by Theorem $\ref{T-C73}$, we have $N=qI_M$ for some primary element $q\in L$. But by Theorem 3.6 of \cite{MB1}, we have $rad(N)=rad(qI_M)=(\sqrt{q})I_M$ where $\sqrt{q}\in L$ is a prime element as $q\in L$ is primary. Therefore  by Lemma \ref{L-C62}, $(\sqrt{q})I_M=rad(N)$ is a prime element of $M$.
              \end{proof} 
       
       Now, in view of the Corollary $\ref{C-C72}$ we give the following definition.
       
       \begin{defn}
       By a {\bf $p$-pseudo-primary} element $N\in M$, we mean a pseudo-primary element $N$ with $p=\sqrt{N:I_M}$ which is prime.
       \end{defn}
       
       In the Theorem $\ref{C-C71}$, we proved that $rad(N)$ is prime if $N\in M$ is pseudo-primary, using Theorem $\ref{T-C73}$. In the following theorem we prove the same result independently without using Theorem $\ref{T-C73}$.
       
       \begin{thm}\label{T-C74}
       Let $L$ be a PG-lattice and $M$ be a faithful multiplication PG-lattice $L$-module with $I_M$ compact. If $N\in M$ is a $p$-pseudo-primary element then $(N\vee pI_M)$ and hence $rad(N)$ is prime.
       \end{thm}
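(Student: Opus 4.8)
The guiding observation is that, under the standing hypotheses, $N\vee pI_M$ is nothing but $rad(N)$, so the two assertions of the theorem collapse into one. The plan is therefore to make this identification precise and then to verify that $pI_M$ is prime, using only that $p$ is prime in $L$ — thereby bypassing Theorem~\ref{T-C73}.

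First I would check that $p=\sqrt{N:I_M}$ is a \emph{proper} element of $L$. Since $N<I_M$ we have $(N:I_M)<1$ (otherwise $I_M=1I_M\leqslant N$), and if $p=1$ then, $1$ being compact, $1=1^{n}\leqslant(N:I_M)$, a contradiction; hence $p<1$. By the definition of a $p$-pseudo-primary element, $p$ is a prime element of $L$. Next, by Theorem~3.6 of \cite{MB1} (equivalently, by Theorem~\ref{T-C71} together with the multiplication hypothesis, since $rad(N)=(rad(N):I_M)I_M$) we have $rad(N)=(\sqrt{N:I_M})I_M=pI_M$. As $N\leqslant rad(N)=pI_M$, this yields $N\vee pI_M=pI_M=rad(N)$, so it suffices to prove that $pI_M$ is a prime element of $M$.

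For this last step I would simply invoke Lemma~\ref{L-C62}: $p$ is a proper prime element of $L$, whence $pI_M$ is a prime element of $M$; consequently $N\vee pI_M=pI_M=rad(N)$ is prime. If one prefers a self-contained verification in place of Lemma~\ref{L-C62}, the argument is routine: given $aX\leqslant pI_M$ with $aI_M\nleqslant pI_M$, write $X=xI_M$ (as $M$ is a multiplication module), so $axI_M\leqslant pI_M$ gives $ax\leqslant(pI_M:I_M)=p$, the equality $(pI_M:I_M)=p$ following from faithfulness, compactness of $I_M$ and the cancellation in Theorem~5 of \cite{CT}; primeness of $p$ forces $a\leqslant p$ or $x\leqslant p$, the former contradicting $aI_M\nleqslant pI_M$ and the latter giving $X=xI_M\leqslant pI_M$.

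The only real obstacle — and it is a minor one — is pinning down the identifications $rad(N)=pI_M$ and $N\vee pI_M=pI_M$; both rest on the formula $rad(N)=(\sqrt{N:I_M})I_M$ valid under the hypotheses of the theorem, together with the properness of $p$ established above. Once these are in hand the result is immediate, and, crucially, the argument never uses that $(N:I_M)$ is primary, so it is genuinely independent of Theorem~\ref{T-C73}.
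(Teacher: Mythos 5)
Your proposal is correct, but it takes a genuinely different route from the paper's. The paper does not identify $N\vee pI_M$ with $pI_M$ up front; instead it shows $(N\vee pI_M)\leqslant rad(N)$ by an argument on the variety of $N$ (for every $p_i$-prime $P_i\geqslant N$ one has $p\leqslant p_i$, hence $N\vee pI_M\leqslant P_i\vee p_iI_M\leqslant P_i$), then computes $((N\vee pI_M):I_M)=p$ using Theorem~\ref{T-C71}, invokes Lemma~\ref{L-C71} (in a multiplication module, $N$ is prime iff $(N:I_M)$ is prime) to conclude that $N\vee pI_M$ is a prime element containing $N$, and only then deduces $rad(N)=N\vee pI_M$. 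You instead collapse everything at the start via the radical formula $rad(N)=(\sqrt{N:I_M})I_M=pI_M$ (Theorem~3.6 of \cite{MB1}), so that $N\vee pI_M=pI_M=rad(N)$, and then apply Lemma~\ref{L-C62} ($q$ prime $\Rightarrow qI_M$ prime) rather than Lemma~\ref{L-C71}. Both arguments rest on the same underlying machinery and both genuinely avoid Theorem~\ref{T-C73}, as intended; yours is shorter and makes the theorem transparent, while the paper's two-sided inclusion argument establishes the primeness of $N\vee pI_M$ without first knowing it equals $rad(N)$, which is closer in spirit to the phrasing ``$(N\vee pI_M)$ and hence $rad(N)$ is prime'' and would survive in settings where the explicit formula for $rad(N)$ is not available. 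One small remark: your verification that $p$ is proper is unnecessary, since the paper's definition of a prime element of $L$ already requires properness, and $p=\sqrt{N:I_M}$ is assumed prime in the definition of a $p$-pseudo-primary element.
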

       \begin{proof}
       As $N\in M$ is a $p$-pseudo-primary element, $p=\sqrt{N:I_M}=(rad(N):I_M)$, by Theorem $\ref{T-C71}$. Let $P_i\in M$ be a $p_i$-prime element such that $N\leqslant P_i$. Then $p\leqslant p_i$ which implies that $(N\vee pI_M)\leqslant (P_i\vee p_iI_M)\leqslant P_i$, since $p_iI_M\leqslant P_i$. Thus whenever $P_i\in M$ is a $p_i$-prime element such that $N\leqslant P_i$, we have $(N\vee pI_M)\leqslant P_i$. It follows that $(N\vee pI_M)\leqslant rad(N)$.  As $pI_M\leqslant (N\vee pI_M)$, we have $p\leqslant ((N\vee pI_M):I_M)\leqslant (rad(N):I_M)=p$. Thus $p=((N\vee pI_M):I_M)$ is prime and hence by Lemma $\ref{L-C71}$, $(N\vee pI_M)$ is a prime element containing $N$ and therefore $rad(N)\leqslant (N\vee pI_M)$. It follows that $(N\vee pI_M)=rad(N)$ and consequently, $rad(N)$ is prime.
       \end{proof}
       
       As a consequence of Theorem \ref{L-C74}, we give following corollary without proof.
       
       \begin{cor}
       Let $L$ be a PG-lattice and $M$ be a faithful multiplication PG-lattice $L$-module with $I_M$ compact. If $N\in M$ is a $p$-pseudo-primary element then $rad(N)=rad(N\vee pI_M)$.
       \end{cor}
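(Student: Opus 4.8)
The plan is to read the result straight off Theorem~\ref{T-C74}. Recall that that theorem does more than assert primeness of $rad(N)$: for a $p$-pseudo-primary element $N$ it establishes the equality $rad(N)=(N\vee pI_M)$ and that this common value is a prime element of $M$. So the entire task reduces to evaluating $rad(N\vee pI_M)$, i.e. to checking that $rad$ fixes the element $N\vee pI_M$.

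This is immediate once one uses that the radical of a prime element is that element itself. Indeed, $rad(Q)=\wedge\{P\in M\mid P\ \text{is prime and}\ Q\leqslant P\}$ by definition, so if $Q$ is prime it occurs in this meet and $rad(Q)\leqslant Q$; since always $Q\leqslant rad(Q)$, we get $rad(Q)=Q$. (This is also Theorem~3.6 of \cite{MB1}, already quoted in the discussion preceding Theorem~\ref{T-C71}.) Applying this with $Q=N\vee pI_M$, which is prime by Theorem~\ref{T-C74}, gives $rad(N\vee pI_M)=N\vee pI_M$.

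Combining the two facts, $rad(N)=(N\vee pI_M)=rad(N\vee pI_M)$, which is the assertion. There is essentially no obstacle: the only point needing care is to invoke the strong form of Theorem~\ref{T-C74} (the equality $rad(N)=N\vee pI_M$, not merely primeness of $rad(N)$) together with the idempotence of $rad$ on prime elements. Note that the standing hypotheses---$L$ a PG-lattice, $M$ a faithful multiplication PG-lattice $L$-module with $I_M$ compact---enter only through Theorem~\ref{T-C74} (which in turn rests on Theorem~\ref{T-C71}); no fresh appeal to the multiplication property is required in this corollary.
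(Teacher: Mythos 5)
Your argument is correct and is exactly the intended one: the paper states this corollary without proof as an immediate consequence of the preceding theorem, whose proof in fact establishes the equality $rad(N)=N\vee pI_M$ together with primeness of this element. Combining that with the observation that $rad(Q)=Q$ for any prime $Q$ (since $Q$ itself appears in the meet defining $rad(Q)$) gives $rad(N\vee pI_M)=N\vee pI_M=rad(N)$, precisely as you wrote.
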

              
       \begin{thm}
       Let $L$ be a PG-lattice and $M$ be a faithful multiplication PG-lattice $L$-module with $I_M$ compact.  
       If $p=(N:I_M)$ and $N\in M$ is a proper element then the following statements are equivalent:
       
       \textcircled{1} $N$ is a $p$-pseudo-primary element of $M$.
       
       \textcircled{2} rad(N) is a $p$-prime element of $M$.
       
       \textcircled{3} rad(N) is a $p$-primary element of $M$.
       
       \textcircled{4} rad(N) is a $p$-pseudo-primary element of $M$.
       \end{thm}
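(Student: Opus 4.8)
The plan is to prove the four statements equivalent via the cycle $\textcircled{1}\Rightarrow\textcircled{2}\Rightarrow\textcircled{3}\Rightarrow\textcircled{4}\Rightarrow\textcircled{1}$. The tools I would lean on throughout are the identity $rad(N)=(rad(N):I_M)I_M$, available since $M$ is a multiplication lattice $L$-module, and the radical formula $(rad(N):I_M)=\sqrt{N:I_M}$ of Theorem \ref{T-C71}; together they give $rad(N)=(\sqrt{N:I_M})I_M$, and also show $rad(N)$ is always a radical element of $M$, so $\sqrt{rad(N):I_M}=(rad(N):I_M)=\sqrt{N:I_M}$. The observation that makes everything collapse is that the hypothesis $p=(N:I_M)$ is much stronger than the usual ``$p=\sqrt{N:I_M}$'' attached to a ``$p$-something'' notion: in each of the four statements the prime $p$ is forced to equal $\sqrt{N:I_M}$ (directly in $\textcircled{1}$, and via the previous sentence in $\textcircled{2},\textcircled{3},\textcircled{4}$), and combining this with $p=(N:I_M)$ yields $(N:I_M)=\sqrt{N:I_M}=p$, a prime element of $L$; then $N$ is prime by Lemma \ref{L-C71} and $rad(N)=pI_M=(N:I_M)I_M=N$, so all four conditions really say ``$N$ is a $p$-prime element of $M$''.

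Concretely, for $\textcircled{1}\Rightarrow\textcircled{2}$ I would note that $N$ being $p$-pseudo-primary with $p=(N:I_M)$ gives $\sqrt{N:I_M}=p$ prime, hence $(N:I_M)=p$ prime, hence $N$ prime (Lemma \ref{L-C71}) and $rad(N)=N$, so $rad(N)$ is $p$-prime. For $\textcircled{2}\Rightarrow\textcircled{3}$: a $p$-prime element is in particular primary, and since $p$ is prime $\sqrt{rad(N):I_M}=\sqrt{p}=p$, so $rad(N)$ is $p$-primary. For $\textcircled{3}\Rightarrow\textcircled{4}$: the primary element $rad(N)$ is pseudo-primary by Theorem \ref{T-C73} — this is the one step where the multiplication hypothesis is genuinely needed, as Example \ref{E-C71} shows — and $\sqrt{rad(N):I_M}=p$ stays prime, so $rad(N)$ is $p$-pseudo-primary. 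For $\textcircled{4}\Rightarrow\textcircled{1}$: Theorem \ref{T-C71} gives $p=\sqrt{rad(N):I_M}=\sqrt{N:I_M}$, which with the hypothesis $p=(N:I_M)$ makes $(N:I_M)$ prime, hence $N$ prime (Lemma \ref{L-C71}) hence pseudo-primary, with $p=\sqrt{N:I_M}$ prime, so $N$ is $p$-pseudo-primary.

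I do not expect a genuine obstacle; the only care required is bookkeeping — checking that $rad(N)$ is a proper element in statements $\textcircled{2}$, $\textcircled{3}$, $\textcircled{4}$ so that Theorem \ref{T-C73} and the definitions of $p$-prime, $p$-primary and $p$-pseudo-primary may be applied to it, which is automatic since prime, primary and pseudo-primary elements are proper by definition. The substantive content, and the point I would flag at the outset, is the reduction in the first paragraph: under the hypothesis $p=(N:I_M)$ every one of the four conditions is equivalent to ``$(N:I_M)$ is a prime element of $L$'', after which Lemma \ref{L-C71} and the radical formula of Theorem \ref{T-C71} turn each implication into a one-line verification.
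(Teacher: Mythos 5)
Your proof is correct, and while its individual steps overlap with the paper's (the trivial implication $\textcircled{2}\Rightarrow\textcircled{3}$ and the appeal to Theorem \ref{T-C73} for $\textcircled{3}\Leftrightarrow\textcircled{4}$ are essentially identical), the organizing idea is genuinely different. The paper proves $\textcircled{1}\Rightarrow\textcircled{2}$ by invoking Theorem \ref{C-C71} --- the nontrivial fact that the radical of any pseudo-primary element is prime, whose proof runs through Theorem \ref{T-C73} and Lemma \ref{L-C62} --- and exploits the standing hypothesis $p=(N:I_M)$ only in the step $\textcircled{3}\Rightarrow\textcircled{1}$, where it verifies the pseudo-primary condition for $N$ directly from the primariness of $rad(N)$. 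You instead observe at the outset that in each of the four statements the prime $p$ is forced to equal $\sqrt{N:I_M}$ (using Theorem \ref{T-C71} and $\sqrt{\sqrt{a}}=\sqrt{a}$), so that the hypothesis $p=(N:I_M)$ yields $(N:I_M)=\sqrt{N:I_M}=p$ prime; Lemma \ref{L-C71} then makes $N$ itself prime, $rad(N)=N$, and every implication collapses to a one-line check. This buys two things: it replaces the heavy Theorem \ref{C-C71} by the elementary Lemma \ref{L-C71}, and it makes explicit the real content of the theorem under the hypothesis $p=(N:I_M)$, namely that each of $\textcircled{1}$--$\textcircled{4}$ is equivalent to ``$N$ is a $p$-prime element of $M$.'' One small quibble: your aside that $\textcircled{3}\Rightarrow\textcircled{4}$ is ``the one step where the multiplication hypothesis is genuinely needed'' is not accurate, since Lemma \ref{L-C71} and Theorem \ref{T-C71}, which you use in every other step, also require $M$ to be a multiplication lattice module; this does not affect the validity of the argument.
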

       \begin{proof}
       \textcircled{1}$\Longrightarrow$\textcircled{2}. Suppose \textcircled{1} holds. So $N$ is pseudo-primary such that $p=\sqrt{N:I_M}$ which is prime. Then by Theorem $\ref{C-C71}$ and Theorem $\ref{T-C71}$, we have $rad(N)$ is prime such that $p=(rad(N):I_M)$ which is prime.
       
       \textcircled{2}$\Longrightarrow$\textcircled{3}. It is clear, since every prime is primary.
       
       \textcircled{3}$\Longrightarrow$\textcircled{1}. Suppose \textcircled{3} holds. So $rad(N)$ is primary such that $p=\sqrt{rad(N):I_M}=\sqrt{\sqrt{N:I_M}}=\sqrt{N:I_M}$, by Theorem $\ref{T-C71}$ and $p$ is prime. Let $aX\leqslant N$ for $a\in L$, $X\in M$. Then $aX\leqslant rad(N)$. As $rad(N)$ is primary, we have either $a\leqslant \sqrt{rad(N):I_M}=p=(N:I_M)$ or $X\leqslant rad(N)$ which proves that $N$ is $p$-pseudo-primary.
       
       \textcircled{3}$\Longleftrightarrow$\textcircled{4}. It is clear by Theorem $\ref{T-C73}$.
       \end{proof}
       
       \begin{cor}
       Let $L$ be a PG-lattice and $M$ be a faithful multiplication PG-lattice $L$-module with $I_M$ compact.  
       If a proper element $N\in M$ is pseudo-primary then the following statements are equivalent:
       
       \textcircled{1} rad(N) is a $p$-prime element of $M$.
       
       \textcircled{2} rad(N) is a $p$-primary element of $M$.
       
       \textcircled{3} rad(N) is a $p$-pseudo-primary element of $M$.
       \end{cor}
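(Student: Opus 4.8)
The plan is to derive this corollary from the theorem immediately preceding it, by applying that theorem with $rad(N)$ playing the role of its ``$N$''. Throughout, the relevant $p$ is $p=(rad(N):I_M)$; by Theorem~\ref{T-C71} this equals $\sqrt{N:I_M}$, and by Corollary~\ref{C-C72} it is a prime element of $L$, so it is a legitimate choice of $p$ for all three statements of the corollary.

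First I would check that $rad(N)$ is an admissible input for the previous theorem. Since $N$ is pseudo-primary, Theorem~\ref{C-C71} gives that $rad(N)$ is a prime element of $M$, hence in particular $rad(N)<I_M$ is proper. Next I would record that $rad$ fixes $rad(N)$, i.e.\ $rad(rad(N))=rad(N)$: this is immediate from the earlier observation (Theorem~3.6 of \cite{MB1}) that the radical of a prime element is that element itself; equivalently, from $rad(N)=(\sqrt{N:I_M})I_M$ together with $\sqrt{\sqrt{a}}=\sqrt{a}$ one also obtains $\sqrt{rad(N):I_M}=\sqrt{N:I_M}=p$, so $rad(N)$ is a radical element and the hypothesis ``$p=(rad(N):I_M)$'' of the previous theorem is met.

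With these two observations in place, applying the previous theorem to $rad(N)$ yields that the statements ``$rad(N)$ is $p$-pseudo-primary'', ``$rad(rad(N))$ is $p$-prime'', ``$rad(rad(N))$ is $p$-primary'' and ``$rad(rad(N))$ is $p$-pseudo-primary'' are all equivalent; rewriting each $rad(rad(N))$ as $rad(N)$, the last three are precisely statements~\textcircled{1}, \textcircled{2} and \textcircled{3} of the corollary, which are therefore equivalent. (One may note in passing that under the hypothesis each of the three in fact holds outright, since $rad(N)$ is prime with $(rad(N):I_M)=p$ prime, and a prime element is automatically primary and pseudo-primary.) I do not expect a genuine obstacle here; the only point needing attention is the bookkeeping around $p$ — confirming that the ``$p$'' produced when the previous theorem is specialised to $rad(N)$, namely $(rad(N):I_M)$, is the same prime element $\sqrt{N:I_M}$ named in the corollary, so that hypotheses and conclusion are stated for one consistent $p$.
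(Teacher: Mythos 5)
Your proof is correct, but it reaches the conclusion by a slightly different route than the paper, whose proof is simply ``obvious'' --- meaning that statements \textcircled{1}--\textcircled{3} of the corollary are read off verbatim as statements \textcircled{2}--\textcircled{4} of the immediately preceding theorem, applied to $N$ itself with no substitution; the pseudo-primary hypothesis is not even needed for the bare equivalence on that reading. You instead apply that theorem with $rad(N)$ in place of $N$, which obliges you to verify two extra facts: that $rad(N)$ is proper (which you correctly extract from Theorem~\ref{C-C71}, and which is where the pseudo-primary hypothesis enters) and that $rad(rad(N))=rad(N)$ (correct, via Theorem~3.6 of the cited reference and $\sqrt{\sqrt{a}}=\sqrt{a}$). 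Both routes land in the same place, and your careful bookkeeping of $p$ --- checking that $(rad(N):I_M)=\sqrt{rad(N):I_M}=\sqrt{N:I_M}$, so that a single prime $p$ serves all three statements --- is actually more scrupulous than the paper, which declares $p=(N:I_M)$ in the preceding theorem even though the definitions of $p$-primary and $p$-pseudo-primary force $p=\sqrt{N:I_M}$. Your parenthetical remark is worth promoting: since $N$ pseudo-primary makes $rad(N)$ prime (Theorem~\ref{C-C71}) with $(rad(N):I_M)=\sqrt{N:I_M}$ prime (Theorem~\ref{T-C71}, Corollary~\ref{C-C72}), all three statements hold outright, which is the shortest complete proof of their equivalence.
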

       \begin{proof}
       The proof is obvious.
       \end{proof}
       
       \begin{thm}
       Let $L$ be a PG-lattice and $M$ be a faithful multiplication PG-lattice $L$-module with $I_M$ compact.  
       If a proper element $N\in M$ is a pseudo-primary element
       then for every $K\nleqslant rad(N)$, $(N:K)$ is a primary element of $L$ and $\{\sqrt{N:K}\mid K\nleqslant rad(N)\}$ is a chain of prime elements of $L$. 
       \end{thm}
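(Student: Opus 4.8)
The plan is to reduce both assertions to facts about the single element $(N:I_M)$, by proving that $(N:K)=(N:I_M)$ for \emph{every} $K\nleqslant rad(N)$.

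First I would record the starting data. Since $N$ is pseudo-primary, Theorem~\ref{T-C73} gives that $p=(N:I_M)$ is a primary element of $L$, and it is proper (otherwise $N=1\cdot I_M=I_M$), so that $\sqrt{p}=\sqrt{N:I_M}$ is a prime element of $L$ by Corollary~\ref{C-C72}. I would also note, for the degenerate case below, that $rad(N)$ is prime and hence proper by Theorem~\ref{C-C71}, so $I_M\nleqslant rad(N)$ and $K=I_M$ is indeed among the elements under consideration.

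Next I would fix $K\in M$ with $K\nleqslant rad(N)$ and establish $(N:K)=p$. The inclusion $p=(N:I_M)\leqslant(N:K)$ is immediate from $K\leqslant I_M$. For the reverse inclusion, take $a\in L$ with $a\leqslant(N:K)$; then $aK\leqslant(N:K)K\leqslant N$, using only join-distributivity of the module action, and since $N$ is pseudo-primary this forces $a\leqslant(N:I_M)$ or $K\leqslant rad(N)$. The second alternative is excluded by hypothesis, so $a\leqslant p$, whence $(N:K)\leqslant p$ and thus $(N:K)=p$. In particular $(N:K)$ is primary for every $K\nleqslant rad(N)$, which is the first assertion. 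Since $(N:K)=p$ for \emph{all} such $K$, the set $\{\sqrt{N:K}\mid K\nleqslant rad(N)\}$ collapses to the singleton $\{\sqrt{p}\}$, which is trivially totally ordered and consists of the prime element $\sqrt{p}$; this gives the second assertion.

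I do not expect a real obstacle in this statement: the entire content is the constancy of $(N:K)$ on $\{K\mid K\nleqslant rad(N)\}$, and that drops out in one line straight from the definition of pseudo-primary. The only points needing a moment's care are not overlooking the case $K=I_M$ (handled by observing $rad(N)$ is proper via Theorem~\ref{C-C71}) and recognizing that ``chain of prime elements'' is automatic once all the radicals coincide; no auxiliary work on principal elements is required here, in contrast with the earlier lemmas of this section.
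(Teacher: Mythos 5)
Your proof is correct, and it takes a slightly different (and in fact stronger) route than the paper. The paper never establishes the identity $(N:K)=(N:I_M)$; instead it shows $(N:K)$ is primary directly, by taking $ab\leqslant (N:K)$, observing $(ab)K\leqslant N$ with $K\nleqslant rad(N)$, concluding $ab\leqslant (N:I_M)$, and then splitting via primariness of $(N:I_M)$ (Theorem~\ref{T-C73}) into $a\leqslant (N:I_M)\leqslant (N:K)$ or $b\leqslant\sqrt{N:I_M}\leqslant\sqrt{N:K}$; the chain claim is then proved separately by a pairwise comparison showing $\sqrt{N:X}\leqslant\sqrt{N:Y}$ for any two admissible $X,Y$. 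You instead apply the pseudo-primary condition to a single factor $a\leqslant(N:K)$ rather than to a product, which yields the constancy $(N:K)=(N:I_M)$ for all $K\nleqslant rad(N)$ — this is exactly condition \textcircled{2} of the earlier CG-characterization of pseudo-primary elements, which you could have cited instead of rederiving — and then both assertions drop out at once, with the ``chain'' collapsing to the singleton $\{\sqrt{N:I_M}\}$. Your version is cleaner and makes explicit what the paper's pairwise argument only implies (by symmetry the paper's comparison in fact forces all the radicals $\sqrt{N:K}$ to coincide, so the chain is a singleton there too); the paper's version has the minor advantage of not needing the propriety discussion for $(N:K)$ separately, since it reads it off from $K\nleqslant N$. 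Both rest on the same two inputs: the defining property of pseudo-primary applied with $K\nleqslant rad(N)$, and the primariness of $(N:I_M)$ from Theorem~\ref{T-C73}.
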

       \begin{proof} Let $K\in M$ be such that $K\nleqslant rad(N)$. So $K\nleqslant N$ and thus $\sqrt{N:K}\neq 1$. Let $ab\leqslant (N:K)$ for $a,\ b\in L$. Then as $(ab)K\leqslant N$, $K\nleqslant rad(N)$ and $N$ is pseudo-primary, we have $ab\leqslant (N:I_M)$ where $(N:I_M)$ is primary, by Theorem  $\ref{T-C73}$. It follows that either $a\leqslant (N:I_M)\leqslant (N:K)$ or $b\leqslant \sqrt{N:I_M} \leqslant \sqrt{N:K}$ and hence $(N:K)$ is a primary element of $L$. As $(N:K)\in L$ is a primary element, $\sqrt{N:K}\in L$ is a prime element. Let $X,\ Y\in M$ be such that $X\nleqslant rad(N)$ and $Y\nleqslant rad(N)$. Then $X\nleqslant N$ and $Y\nleqslant N$. So $\sqrt{N:X}\neq 1$ and $\sqrt{N:Y}\neq 1$. Without loss of generality, assume that $\sqrt{N:Y}\nleqslant \sqrt{N:X}$. Let $a\leqslant \sqrt{N:X}$. Then $a^nX\leqslant N$ for some $n\in Z_+$. Since $N$ is pseudo-primary and $X\nleqslant rad(N)$, we have $a^n\leqslant (N:I_M)\leqslant (N:Y)$ which implies $a\leqslant \sqrt{N:Y}$. Thus $\sqrt{N:X}\leqslant \sqrt{N:Y}$ and hence $\{\sqrt{N:K}\mid K\nleqslant rad(N)\}$ is a chain of prime elements of $L$.
       \end{proof}
       
       We conclude this section by proving that $rad(N)$ is prime through the concept of saturation of a pseudo-primary element $N\in M$. In view of the definition of saturation of a submodule of an $R$-module $M$ in \cite{L}, we define the saturation of an element of an $L$-module $M$ and also introduce the notion of varity of an element of an $L$-module $M$.  
       
       \begin{defn} 
       For a prime element $p\in L$ and a proper element $N\in M$, the element $S_p(N)=\vee \{X\in M \mid cX\leqslant N \ for \ some \  c\nleqslant p \}$ is called the {\bf saturation of $N$ with respect to $p$}.
       \end{defn}
       
       \begin{defn}
       For a proper element $N\in M$, the {\bf varity of $N$} is the set 
       $V(N)=\{P\in M \mid \ P \ is \ prime \ and \ N\leqslant P \}$.  Similarly, for a proper element $N\in M$, the {\bf varity of $(N:I_M)\in L$} is the set $V((N:I_M))=\{p\in L \mid \ p \ is \ prime \ and \ (N:I_M)\leqslant p \}$.
       \end{defn}
       
       Clearly, $rad(N)=\wedge V(N)$ for all $N\in M$.
       
       \begin{lem}\label{L-C75}
       In an L-module M, if a proper element $N\in M$ is pseudo-primary then $S_p(N)\leqslant rad(N)$ for all $p\in V((N:I_M))$.    
       \end{lem}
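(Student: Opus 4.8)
The plan is to reduce the inequality $S_p(N)\leqslant rad(N)$ to a statement about the individual generators of the join defining $S_p(N)$, and then apply the pseudo-primary hypothesis to each generator. Since $rad(N)=\wedge V(N)$ is a fixed element of $M$, and $S_p(N)$ is by definition the join $\vee\{X\in M \mid cX\leqslant N$ for some $c\nleqslant p\}$, it suffices to prove that every such $X$ satisfies $X\leqslant rad(N)$; then $rad(N)$ is an upper bound for the set whose join is $S_p(N)$, and the conclusion follows from the definition of join.

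So the key step is: fix $p\in V((N:I_M))$, so $p\in L$ is prime with $(N:I_M)\leqslant p$, and take an arbitrary $X\in M$ with $cX\leqslant N$ for some $c\in L$, $c\nleqslant p$. Because $N$ is pseudo-primary, the relation $cX\leqslant N$ forces either $c\leqslant (N:I_M)$ or $X\leqslant rad(N)$. The first alternative is impossible: it would give $c\leqslant (N:I_M)\leqslant p$, contradicting the choice $c\nleqslant p$. Hence $X\leqslant rad(N)$, as desired.

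There is no serious obstacle here; the only thing to be careful about is the logical bookkeeping, namely that one must take the join over \emph{all} admissible $X$ at once rather than argue about a single witness, and that the membership $(N:I_M)\leqslant p$ (which is exactly what $p\in V((N:I_M))$ encodes) is what makes the forbidden alternative contradictory. No compactness, multiplication, or PG-hypotheses on $M$ or $L$ are needed, which is why the lemma is stated for an arbitrary $L$-module $M$.
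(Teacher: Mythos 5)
Your proof is correct and follows essentially the same route as the paper: take a generator $X$ of the join defining $S_p(N)$ with $cX\leqslant N$, $c\nleqslant p$, note that $c\leqslant (N:I_M)\leqslant p$ is impossible, and conclude $X\leqslant rad(N)$ from pseudo-primariness. Your explicit remark that one must bound \emph{all} generators and then pass to the join is in fact a slightly more careful rendering of the paper's argument, which phrases the same step as ``let $X\leqslant S_p(N)$.''
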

       \begin{proof}
       Let $X\leqslant S_p(N)$ for $p\in V((N:I_M))$. Then $cX\leqslant N$ for some $c\nleqslant p$ and $(N:I_M)\leqslant p$. It follows that $c\nleqslant (N:I_M)$. As $N$ is pseudo-primary, we have $X\leqslant rad(N)$ and hence $S_p(N)\leqslant rad(N)$. 
       \end{proof}
       
       \begin{thm}\label{T-C7}
       In an $L$-module $M$, if $N\in M$ is a pseudo-primary element and $S_p(N)$ is a prime element of $M$ for some $p\in V((N:I_M))$ then $rad(N)$ is a prime element of $M$.
       \end{thm}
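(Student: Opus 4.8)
The plan is to show that under these hypotheses one in fact has $rad(N)=S_p(N)$, after which primeness of $rad(N)$ is immediate from the standing assumption on $S_p(N)$.

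First I would record the chain $N\leqslant S_p(N)\leqslant rad(N)$. The left-hand inequality holds because $p\in L$ is prime, hence proper, so $1\nleqslant p$; taking $c=1$ and $X=N$ shows $N$ is one of the elements in the join defining $S_p(N)$, whence $N\leqslant S_p(N)$. The right-hand inequality is exactly Lemma \ref{L-C75} applied to the given $p\in V((N:I_M))$, using that $N$ is pseudo-primary.

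Next I would invoke the hypothesis that $S_p(N)$ is a prime element of $M$. Since $N\leqslant S_p(N)$, this means $S_p(N)\in V(N)$, and as $rad(N)=\wedge V(N)$ (recorded just before Lemma \ref{L-C75}) we get $rad(N)\leqslant S_p(N)$. Combining this with $S_p(N)\leqslant rad(N)$ from the previous step gives $rad(N)=S_p(N)$, and therefore $rad(N)$ is prime.

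There is no genuine obstacle in this argument; it is a short ``sandwich'' between $N$ and $rad(N)$, with primeness of $S_p(N)$ forcing the reverse comparison. The only points requiring a line of care are that $p$ prime forces $1\nleqslant p$ (so that $N$ really does lie below $S_p(N)$) and the identity $rad(N)=\wedge V(N)$, both of which are available from the preceding discussion.
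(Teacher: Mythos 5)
Your proposal is correct and follows essentially the same route as the paper: both establish $N\leqslant S_p(N)$ via $1N\leqslant N$ with $1\nleqslant p$, deduce $rad(N)\leqslant S_p(N)$ from the primeness of $S_p(N)$, and close the sandwich with Lemma \ref{L-C75}. No issues.
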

       \begin{proof}
       Let $S_p(N)\in M$ be a prime element for some $p\in V((N:I_M))$. Clearly, $1N\leqslant N$ for $1\nleqslant p$. Therefore $N\leqslant S_p(N)$. It follows that $rad(N)\leqslant S_p(N)$ and so by Lemma $\ref{L-C75}$, we have $S_p(N)=rad(N)$ and consequently $rad(N)$ is a prime element of $M$.
       \end{proof}
       
       As compared to Theorem $\ref{C-C71}$ and Theorem $\ref{T-C74}$, in the above Theorem $\ref{T-C7}$ to prove $rad(N)$ is prime, we do not need $M$ to be a multiplication lattice $L$-module. 
       
       The following theorem gives another way to show $rad(N)$ is prime. 
       
       \begin{thm}
       Let $N$ be a proper element of an $L$-module $M$. Then $rad(N)$ is a pseudo-primary element of $M$ if and only if $rad(N)$ is prime. 
       \end{thm}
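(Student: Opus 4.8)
The plan is to treat the two implications separately; the reverse one is immediate, and the forward one reduces entirely to the idempotence of $rad$. For the direction ``$rad(N)$ is prime $\Longrightarrow$ $rad(N)$ is pseudo-primary'', I would simply invoke the observation made right after the definition of a pseudo-primary element, namely that every prime element of $M$ is pseudo-primary; applying this to the prime element $rad(N)$ settles this direction with no further work.

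For the direction ``$rad(N)$ is pseudo-primary $\Longrightarrow$ $rad(N)$ is prime'', write $K=rad(N)$ and note that $K$ is proper, since pseudo-primary elements are by definition proper. The key preliminary step is to verify that $rad(K)=K$. Using $rad(X)=\wedge V(X)$ for all $X\in M$, one checks that a prime element $P\in M$ satisfies $N\leqslant P$ if and only if $rad(N)\leqslant P$: indeed, if $N\leqslant P$ then $rad(N)=\wedge V(N)\leqslant P$ because $P\in V(N)$, while conversely $N\leqslant rad(N)\leqslant P$ is trivial. Hence $V(N)=V(rad(N))$, so $rad(K)=rad(rad(N))=\wedge V(rad(N))=\wedge V(N)=rad(N)=K$.

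With this in hand the proof concludes quickly. Take $a\in L$ and $X\in M$ with $aX\leqslant K$. Since $K$ is pseudo-primary, either $a\leqslant (K:I_M)$ or $X\leqslant rad(K)=K$; and if $a\leqslant (K:I_M)$ then $aI_M\leqslant (K:I_M)I_M\leqslant K$. Thus in every case $aI_M\leqslant K$ or $X\leqslant K$, which together with the properness of $K$ is exactly the statement that $K=rad(N)$ is a prime element of $M$. (The degenerate case in which no prime element of $M$ lies above $N$ gives $rad(N)=I_M$, which is improper, so neither side of the equivalence holds and the statement is vacuously true there.)

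The only point that is not a mechanical unwinding of the definitions is the idempotence $rad(rad(N))=rad(N)$, together with the elementary equivalence $a\leqslant (E:I_M)\iff aI_M\leqslant E$; since the former follows readily from $rad=\wedge V$, I do not anticipate a genuine obstacle in this argument.
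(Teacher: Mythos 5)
Your proposal is correct and is exactly the argument the paper leaves implicit behind ``the proof is obvious'': the reverse direction is the remark that every prime element is pseudo-primary, and the forward direction reduces to the idempotence $rad(rad(N))=rad(N)$, which you justify properly via $V(N)=V(rad(N))$. No gaps; your handling of the degenerate case $rad(N)=I_M$ is a careful extra that the paper does not bother to mention.
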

       \begin{proof}
       The proof is obvious.
       \end{proof}
       
 \section{Classical prime elements of $M$}
   
   In an attempt of extending the concept of weakly prime submodules of an $R$-module $M$ as introduced in \cite{BK2} to an $L$-module $M$, the notion of classical prime elements in an $L$-module $M$ was introduced in \cite{MB1}. In this section, we study this classical prime elements of a lattice module $M$ to obtain its characterizations and its properties. According to the definition 2.27 in \cite{MB1}, a proper element $N\in M$ is said to be {\bf classical prime} if for all $K\in M$, $a,\ b\in L$, $abK\leqslant N$ implies either $aK \leqslant N$ or $bK\leqslant N$. 
   
   \begin{defn}
   An $L$-module $M$ is said to be classical prime if the element $O_M$ of $M$ is classical prime.
   \end{defn}    
   
   Clearly, every prime element of $M$ is classical prime but the following example shows that the converse need not be true. 
   
   \begin{example}
   Let $R$ be an integral domain and $M=R\oplus R$. Then $M$ is a module over $R$. Suppose $L(R)$ is the set of all ideals of $R$ and $L(M)$ is the set of all submodules of $M$. Then $L(M)$ is a lattice module over $L(R)$. If $P$ is a non-zero prime ideal of $R$ then $P\oplus 0$ and $0\oplus P$ are classical prime elements of $L(M)$ but they are not prime elements of $L(M)$ (see M.~Behboodi et.~al.~\cite{BK2}).
   \end{example}
   
    According to \cite{MB1}, a proper element $Q$ of an $L$-module $M$ is said to be 2-absorbing if for all $a, b\in L$, $N\in M$, $abN\leqslant Q$ implies either $ab\leqslant (Q:I_M)$ or $bN\leqslant Q$ or $aN\leqslant Q$. \\
    
    Clearly, every classical prime element of $M$ is 2-absorbing.\\
   
 Now we obtain an interesting characterization of a classical prime element of a multiplication lattice $L$-module $M$.
   \begin{thm}\label{T-C76}
   Let $N$ be a proper element of a multiplication lattice $L$-module $M$.  Then the following statements are equivalent:
   
   \textcircled{1} $N$ is a classical prime element of $M$. 
   
   \textcircled{2} $(N:B)=(N:I_M)$ for every proper element $B\in M$ such that $B>N$ 
    
   \textcircled{3} $(N:X)=(N:I_M)$ for every $X\nleqslant N$ in $M$.
     
   \textcircled{4} $N=(N:a)$ for every proper element $a\in L$ such that $(N:I_M)<a$.
   \end{thm}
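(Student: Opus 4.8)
The plan is to run the cycle of implications \textcircled{1}$\Rightarrow$\textcircled{2}$\Rightarrow$\textcircled{3}$\Rightarrow$\textcircled{4}$\Rightarrow$\textcircled{1}, using only the elementary residuation facts $(N:\underset{\alpha}{\vee}A_\alpha)=\underset{\alpha}{\wedge}(N:A_\alpha)$, $(N:N)=1$, $(N:1)=N$, $N\leqslant(N:a)$ for every $a\in L$, the containment $(N:I_M)I_M\leqslant N$, and the fact that $(N:\cdot)$ is order-reversing (so $(N:I_M)\leqslant(N:X)$ whenever $X\leqslant I_M$), together with the defining property of a multiplication lattice $L$-module that each element of $M$ is of the form $aI_M$.

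For \textcircled{1}$\Rightarrow$\textcircled{2}: given $N$ classical prime and a proper element $B$ with $B>N$, write $B=bI_M$. The inclusion $(N:I_M)\leqslant(N:B)$ is automatic, and for the reverse, any $c\in L$ with $cB\leqslant N$ gives $c\,b\,I_M\leqslant N$; since $bI_M=B\nleqslant N$, the classical-prime condition applied with $K=I_M$ forces $cI_M\leqslant N$, that is $c\leqslant(N:I_M)$. This is the one place where the multiplication hypothesis is genuinely used, as it is what allows $I_M$ to be fed into the classical-prime relation. For \textcircled{2}$\Rightarrow$\textcircled{3}: given $X\nleqslant N$, put $B=N\vee X$, so that $B>N$ and $(N:B)=(N:N)\wedge(N:X)=(N:X)$; if $B<I_M$ apply \textcircled{2}, while if $B=I_M$ then $(N:X)=(N:B)=(N:I_M)$ directly. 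For \textcircled{3}$\Rightarrow$\textcircled{4}: since $N\leqslant(N:a)$ always holds, it suffices to exclude $Y\leqslant(N:a)$ with $Y\nleqslant N$; but such a $Y$ would have $aY\leqslant N$, so \textcircled{3} would give $a\leqslant(N:Y)=(N:I_M)$, contradicting $(N:I_M)<a$. Hence $(N:a)=N$.

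The step demanding the most care is \textcircled{4}$\Rightarrow$\textcircled{1}. Suppose $abK\leqslant N$ with $aK\nleqslant N$; I would show $bK\leqslant N$. First, $a\nleqslant(N:I_M)$, for otherwise $aK\leqslant aI_M\leqslant N$. Hence $a'=(N:I_M)\vee a$ satisfies $(N:I_M)<a'$, and $(N:a')=N$ --- by \textcircled{4} if $a'$ is a proper element of $L$, and because $(N:1)=N$ in the degenerate case $a'=1$. Now $a'bK=(N:I_M)bK\vee abK\leqslant N$, using $(N:I_M)bK\leqslant(N:I_M)I_M\leqslant N$, so $bK\leqslant(N:a')=N$, as required. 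The only subtlety is treating the degenerate case $a'=1$ alongside the generic one; apart from that, the argument is the familiar device of absorbing $(N:I_M)$ into the coefficient. It is also worth noting that only the implication \textcircled{1}$\Rightarrow$\textcircled{2} uses that $M$ is a multiplication lattice $L$-module, the remaining three implications being valid in an arbitrary $L$-module.
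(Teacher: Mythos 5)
Your proposal is correct and follows essentially the same route as the paper: the same cycle \textcircled{1}$\Rightarrow$\textcircled{2}$\Rightarrow$\textcircled{3}$\Rightarrow$\textcircled{4}$\Rightarrow$\textcircled{1}, with the same key moves (writing $B=bI_M$ and applying the classical-prime condition with $K=I_M$, taking $B=N\vee X$, and absorbing $(N:I_M)$ into the coefficient via $a'=(N:I_M)\vee a$). Your explicit handling of the degenerate cases $N\vee X=I_M$ and $a'=1$ is a small point of extra care that the paper glosses over, but it does not change the argument.
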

   \begin{proof}\textcircled{1}$\Longrightarrow$\textcircled{2}. Suppose \textcircled{1} holds. Let $B\in M$ be a proper element such that $B>N$.
   Since $M$ is multiplication lattice $L$-module, we have $B=bI_M$ for some $b\in L$.  Let $a\leqslant (N:B)$. Then $abI_M\leqslant N$. As $N$ is classical prime and $bI_M=B\nleqslant N$, we have $aI_M\leqslant N$ which implies $a\leqslant (N:I_M)$. Thus $(N:B)\leqslant (N:I_M)$. Clearly, $(N:I_M)\leqslant (N:B)$ and hence $(N:B)=(N:I_M)$. 
   
   \textcircled{2}$\Longrightarrow$\textcircled{3}. Suppose \textcircled{2} holds. Let $X\in M$ be a proper element such that $X\nleqslant N$. Let $Y=N\vee X$. Then by \textcircled{2}, for $Y>N$ we have $(N:Y)=(N:I_M)$ which implies $(N:I_M)=(N:(N\vee X))=(N:X)$.
   
   \textcircled{3}$\Longrightarrow$\textcircled{4}. Suppose \textcircled{3} holds.
   Let $a\in L$ be a proper element such that $(N:I_M)<a$. Then $a\nleqslant (N:I_M)$. Let $X\leqslant (N:a)$. So $a\leqslant (N:X)$. If $X\nleqslant N$ then by \textcircled{3}, we have $(N:X)=(N:I_M)$ which implies $a\leqslant (N:I_M)$, a contradiction. Hence we must have $X\leqslant N$. It follows that $(N:a)\leqslant N$ and thus $N=(N:a)$, since $N\leqslant (N:a)$.
   
   \textcircled{4}$\Longrightarrow$\textcircled{1}. Suppose \textcircled{4} holds. Let $rsX\leqslant N$ and $rX\nleqslant N$ for $r,\ s\in L,\ X\in M $. Set $a=(N:I_M)\vee r$. Then as $a>(N:I_M)$ by \textcircled{4}, we have $N=(N:a)$. Also, $asX=(N:I_M)sX\vee rsX\leqslant N$ and so $sX\leqslant (N:a)=N$ and hence $N$ is classical prime.
   \end{proof}
   
   \begin{cor}
   If a faithful multiplication $L$-module $M$ is classical prime then $(O_M:B)=0$ for every proper element $B\neq O_M$ of $M$.
   \end{cor}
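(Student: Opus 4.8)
The plan is to apply the characterization of classical prime elements just obtained in Theorem~\ref{T-C76}, specialized to the element $N = O_M$. Since $M$ is classical prime by hypothesis, $O_M$ is a classical prime element of $M$, and since $M$ is a multiplication lattice $L$-module, Theorem~\ref{T-C76} is available. In particular, condition \textcircled{3} of that theorem gives $(O_M : X) = (O_M : I_M)$ for every $X \nleqslant O_M$ in $M$, i.e.\ for every proper element $X \neq O_M$.

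First I would invoke faithfulness: by definition $(O_M : I_M) = 0$. Then for any proper element $B \neq O_M$ of $M$, we have $B \nleqslant O_M$ (since $B \neq O_M$ and $O_M$ is the least element, $B \leqslant O_M$ would force $B = O_M$), so condition \textcircled{3} of Theorem~\ref{T-C76} applies with $X = B$ and yields $(O_M : B) = (O_M : I_M) = 0$. This is exactly the assertion of the corollary.

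There is essentially no obstacle here: the only points to be careful about are that $B$ proper and $B \neq O_M$ together give $B \nleqslant O_M$ so that Theorem~\ref{T-C76}\,\textcircled{3} is applicable, and that faithfulness is what converts the conclusion $(O_M:B) = (O_M:I_M)$ into $(O_M:B) = 0$. Both are immediate, so the proof is a one-line deduction from the theorem.

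\begin{proof}
Since $M$ is classical prime, $O_M$ is a classical prime element of $M$. Let $B$ be a proper element of $M$ with $B \neq O_M$; then $B \nleqslant O_M$. By Theorem~\ref{T-C76}, \textcircled{1}$\Longrightarrow$\textcircled{3}, applied to $N = O_M$, we have $(O_M : B) = (O_M : I_M)$. As $M$ is faithful, $(O_M : I_M) = 0$, and hence $(O_M : B) = 0$.
\end{proof}
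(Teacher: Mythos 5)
Your proof is correct and is essentially the paper's own argument: the paper applies Theorem~\ref{T-C76} with $N=O_M$ (using condition \textcircled{2} for $B>O_M$ rather than your \textcircled{3} for $B\nleqslant O_M$, an immaterial difference) and then uses faithfulness to get $(O_M:I_M)=0$.
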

   \begin{proof}
   As $O_M<B$ and $O_M$ is classical prime, by Theorem $\ref{T-C76}$-\textcircled{2}, for $N=O_M$ we have $(O_M:B)=(O_M:I_M)=0$, since $M$ is faithful.
   \end{proof}
   
   \begin{thm}\label{T-C77}
   Let $N$ be a proper element of a multiplication lattice $L$-module $M$.  Then the following statements are equivalent:
   \begin{enumerate}
   \item[\textcircled{1}] $N$ is a classical prime element of $M$. 
   
   \item[\textcircled{2}] For any proper elements $A$, $K$ and $B$ of $M$, if $(A:K)(B:K)\leqslant (N:K)$ then either $(A:K)\leqslant (N:K)$ or $(B:K)\leqslant (N:K)$
   \end{enumerate} 
   \end{thm}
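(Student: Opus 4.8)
The plan is to treat the two implications separately: \textcircled{1}$\Longrightarrow$\textcircled{2} is purely formal, whereas \textcircled{2}$\Longrightarrow$\textcircled{1} carries all the content.

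For \textcircled{1}$\Longrightarrow$\textcircled{2}, suppose $N$ is classical prime and let $A,K,B$ be proper elements of $M$ with $(A:K)(B:K)\leqslant (N:K)$. Since $(N:K)K\leqslant N$, multiplying the hypothesis by $K$ yields $(A:K)(B:K)K\leqslant N$. Applying the definition of a classical prime element to the two $L$-elements $(A:K)$ and $(B:K)$ and the module element $K$, we obtain $(A:K)K\leqslant N$ or $(B:K)K\leqslant N$, that is, $(A:K)\leqslant (N:K)$ or $(B:K)\leqslant (N:K)$. (This direction uses neither compactness nor the multiplication hypothesis.)

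For \textcircled{2}$\Longrightarrow$\textcircled{1}, assume \textcircled{2} and suppose $rsX\leqslant N$ with $rX\nleqslant N$, where $r,s\in L$ and $X\in M$; the goal is to prove $sX\leqslant N$. We may assume $X\nleqslant N$, since otherwise $sX\leqslant X\leqslant N$ is immediate. The key move is to feed \textcircled{2} the triple $A=N\vee rX$, $K=X$, $B=N\vee sX$. First I would verify its hypothesis: from $(A:X)X\leqslant A=N\vee rX$ and $(B:X)X\leqslant B=N\vee sX$, together with $rsX\leqslant N$, one gets $(A:X)(B:X)X\leqslant (B:X)(N\vee rX)\leqslant N\vee r(N\vee sX)=N$, and hence $(A:X)(B:X)\leqslant (N:X)=(N:K)$. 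Then \textcircled{2} forces $(A:X)\leqslant (N:X)$ or $(B:X)\leqslant (N:X)$. The first alternative cannot hold: $rX\leqslant A$ gives $r\leqslant (A:X)$, so $(A:X)\leqslant (N:X)$ would yield $rX\leqslant (N:X)X\leqslant N$, contradicting $rX\nleqslant N$. Therefore $(B:X)\leqslant (N:X)$, and since $sX\leqslant B$ gives $s\leqslant (B:X)$, we conclude $sX\leqslant (N:X)X\leqslant N$. Thus $N$ is classical prime.

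The part I would be careful about is the bookkeeping that makes the appeal to \textcircled{2} legitimate, i.e.\ that $A=N\vee rX$, $K=X$ and $B=N\vee sX$ are genuinely proper elements of $M$; the degenerate situations ($X=I_M$, or one of the joins equal to $I_M$) should be disposed of separately — there the conclusion $sX\leqslant N$ either falls straight out of the hypotheses or, using that $M$ is multiplication, reduces to primeness of $(N:I_M)$ in $L$, in the spirit of Theorem \ref{T-C76}. I expect the only genuinely substantive step to be the choice of the auxiliary triple and the verification of the product inclusion $(A:X)(B:X)\leqslant (N:X)$; everything after that is mechanical.
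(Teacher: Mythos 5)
Your proof is correct and follows essentially the same strategy as the paper's: the forward direction is the same formal manipulation (the paper does it elementwise, taking $a\leqslant (A:K)$ and $b\leqslant (B:K)$ and applying the definition to $abK\leqslant N$, but this is the same computation), and the converse is again proved by feeding \textcircled{2} a well-chosen triple. The one genuine difference is the choice of auxiliary elements in \textcircled{2}$\Longrightarrow$\textcircled{1}: the paper takes $A=aK$ and $B=bK$ and uses $(aK:K)(bK:K)\leqslant (abK:K)\leqslant (N:K)$ together with $a\leqslant (aK:K)$ and $b\leqslant (bK:K)$, which lands the conclusion directly without your case split on $X\leqslant N$ and without having to verify the product inclusion by hand; your choice $A=N\vee rX$, $B=N\vee sX$ costs a short computation and a contradiction argument in one branch but works just as well. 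The properness caveat you flag is real, but it is equally present --- and unacknowledged --- in the paper's own argument: \textcircled{2} is stated only for proper $A$, $K$, $B$, yet the paper applies it without checking that $aK$, $bK$ and $K$ are proper (e.g.\ $K=I_M$ is not covered). Neither proof actually uses the multiplication hypothesis in the main line of argument, so your observation to that effect is also accurate. Your extra care about the degenerate cases is a point in your favour rather than a defect.
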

   \begin{proof}\textcircled{1}$\Longrightarrow$\textcircled{2}. Suppose \textcircled{1} holds. For any proper elements $A$, $K$ and $B$ of $M$, let $(A:K)(B:K)\leqslant (N:K)$ and $(B:K)\nleqslant (N:K)$. Then there exists $b\in L$ such that $b\leqslant (B:K)$ and $b\nleqslant (N:K)$. Let $a\leqslant (A:K)$. Then $ab\leqslant (A:K)(B:K)\leqslant (N:K)$ which implies $abK\leqslant N$. As $N$ is classical prime and $bK\nleqslant N$, we get $aK\leqslant N$ and so $a\leqslant (N:K)$. Thus $(A:K)\leqslant (N:K)$. 
   
   \textcircled{2}$\Longrightarrow$\textcircled{1}. Suppose \textcircled{2} holds. Let $abK\leqslant N$ for $a,\ b\in L,\ K\in M $. Then $(abK:K)\leqslant (N:K)$. Since $(aK:K)(bK:K)\leqslant (abK:K)$, we have $(aK:K)(bK:K)\leqslant (N:K)$. So by \textcircled{2}, we get either $(aK:K)\leqslant (N:K)$ or $(bK:K)\leqslant (N:K)$. But as $a\leqslant (aK:K)$ and $b\leqslant (bK:K)$, we get either $a\leqslant (N:K)$ or $b\leqslant (N:K)$ which implies either  $aK\leqslant N$ or $bK\leqslant N$. Thus $N$ is classical prime.
   \end{proof}
   
   The above Theorem \ref{T-C77} is another characterization of a classical prime element of a multiplication lattice $L$-module $M$. In the next theorem, we obtain some interesting properties of a classical prime element of an $L$-module $M$.  
   
   \begin{thm}\label{T-C78}
   Let $N$ be a proper element of an $L$-module $M$.
   \begin{enumerate}
   \item[\textcircled{1}] $N$ is classical prime if and only if $(N:K)\in L$ is prime for every $K\nleqslant N$ in $M$. 
   
   \item[\textcircled{2}] If $N$ is classical prime then $(N:I_M)\in L$ is prime.
    
   \item[\textcircled{3}] If $N$ is classical prime then $(N:K)=(N:rK)$ for all proper elements $r\in L$ and $K\in M$ such that $rK\nleqslant N$.
    
   \item[\textcircled{4}] If $N$ is classical prime and $M$ is a multiplication lattice $L$-module then $\{(N:K)\mid K\nleqslant N \}$ is a chain of prime elements of $L$. 
   \end{enumerate}
   \end{thm}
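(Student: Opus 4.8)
The plan is to prove the four statements in the listed order, since \textcircled{2} is just the case $K=I_M$ of \textcircled{1}, and \textcircled{3} and \textcircled{4} both reduce to \textcircled{1}. For \textcircled{1}, I would first record the harmless remark that $K\nleqslant N$ forces $(N:K)<1$ (else $K=1K\leqslant N$), so $(N:K)$ is a proper element of $L$. For the forward direction, assume $N$ is classical prime and $ab\leqslant(N:K)$ with $a,b\in L$ and $K\nleqslant N$; then $abK\leqslant N$, so classical primeness gives $aK\leqslant N$ or $bK\leqslant N$, that is, $a\leqslant(N:K)$ or $b\leqslant(N:K)$, so $(N:K)$ is prime. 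For the converse, given $abK\leqslant N$ with $K\in M$, $a,b\in L$: if $K\leqslant N$ then $aK\leqslant N$ trivially, and if $K\nleqslant N$ then $ab\leqslant(N:K)$ with $(N:K)$ prime yields $aK\leqslant N$ or $bK\leqslant N$; hence $N$ is classical prime. Then \textcircled{2} is immediate: $N$ proper gives $I_M\nleqslant N$, so \textcircled{1} with $K=I_M$ shows $(N:I_M)$ is prime.

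For \textcircled{3}, fix proper $r\in L$ and $K\in M$ with $rK\nleqslant N$. First note $K\nleqslant N$ (otherwise $rK\leqslant rN\leqslant N$), so $(N:K)$ is prime by \textcircled{1}. The inclusion $(N:K)\leqslant(N:rK)$ is formal, since $xK\leqslant N$ implies $x(rK)=(xr)K=r(xK)\leqslant rN\leqslant N$. Conversely, if $x\leqslant(N:rK)$ then $(xr)K\leqslant N$, so $xr\leqslant(N:K)$; as $(N:K)$ is prime and $r\nleqslant(N:K)$ (because $rK\nleqslant N$), we get $x\leqslant(N:K)$. Hence $(N:K)=(N:rK)$.

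For \textcircled{4}, the two facts I would use --- both valid in an arbitrary $L$-module --- are $(N:X\vee Y)=(N:X)\wedge(N:Y)$ and $(N:X)(N:Y)\leqslant(N:X\vee Y)$. The first is a direct check from the definitions. The second follows by putting $c=(N:X)(N:Y)$ and using the module axioms: $c(X\vee Y)=cX\vee cY$, where $cX=(N:Y)\bigl((N:X)X\bigr)\leqslant(N:Y)N\leqslant N$ and likewise $cY\leqslant N$, so $c(X\vee Y)\leqslant N$. Now if $X,Y\nleqslant N$ then $X\vee Y\nleqslant N$, so $(N:X\vee Y)$ is prime by \textcircled{1}; from $(N:X)(N:Y)\leqslant(N:X\vee Y)$ one factor lies below $(N:X\vee Y)=(N:X)\wedge(N:Y)$, forcing $(N:X)\leqslant(N:Y)$ or $(N:Y)\leqslant(N:X)$. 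Thus $\{(N:K)\mid K\nleqslant N\}$ is a chain, and each member is prime by \textcircled{1}.

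The only step that needs genuine care is the inequality $(N:X)(N:Y)\leqslant(N:X\vee Y)$ in \textcircled{4}, which hinges on $(N:X)X\leqslant N$, on $cN\leqslant N$ for every $c\in L$, and on join-distributivity and associativity of the action; everything else is routine. When $M$ is a multiplication lattice $L$-module one can also bypass this by invoking Theorem \ref{T-C76}\,\textcircled{3}, which gives $(N:K)=(N:I_M)$ for every $K\nleqslant N$, so the set is just the single prime element $(N:I_M)$.
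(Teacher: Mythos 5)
Your proofs of \textcircled{1}, \textcircled{2} and \textcircled{3} coincide with the paper's: the same two-way translation between $abK\leqslant N$ and $ab\leqslant(N:K)$ for \textcircled{1}, the specialization $K=I_M$ for \textcircled{2}, and the same use of primeness of $(N:K)$ together with $r\nleqslant(N:K)$ for \textcircled{3} (your explicit remarks that $(N:K)$ is proper when $K\nleqslant N$ and that $rK\nleqslant N$ forces $K\nleqslant N$ are details the paper leaves tacit, but they are correct and worth having). Where you genuinely diverge is \textcircled{4}. The paper uses the multiplication hypothesis to write $X=xI_M$ and then runs the classical-prime condition once more to get $(N:X)\leqslant(N:I_M)$, so that $(N:K)=(N:I_M)$ for every $K\nleqslant N$ and the ``chain'' collapses to the single prime $(N:I_M)$ --- essentially re-deriving Theorem \ref{T-C76}\,\textcircled{3}, as you observe in your closing remark. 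You instead prove the two general identities $(N:X\vee Y)=(N:X)\wedge(N:Y)$ and $(N:X)(N:Y)\leqslant(N:X\vee Y)$ (both correctly reduced to $(N:X)X\leqslant N$, $cN\leqslant N$ and the distributivity axioms of the module action), note that $X\vee Y\nleqslant N$ so $(N:X\vee Y)$ is prime by \textcircled{1}, and conclude comparability of $(N:X)$ and $(N:Y)$. This argument is valid and strictly stronger: it establishes \textcircled{4} for an arbitrary $L$-module, with the multiplication hypothesis needed only if one wants the sharper conclusion that the chain is a singleton. The paper's route is shorter given Theorem \ref{T-C76}; yours buys generality at the cost of verifying the two residuation identities.
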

   \begin{proof}
   \textcircled{1} Let a proper element $N\in M$ be classical prime and let $K\nleqslant N$ in $M$. Suppose $ab\leqslant (N:K)$ for $a,\ b\in L$. Then  $abK\leqslant N$. Since $N$ is classical prime, we have either $aK\leqslant N$ or $bK\leqslant N$ which implies either $a\leqslant (N:K)$ or $b\leqslant (N:K)$ and hence $(N:K)\in L$ is prime. Conversely, let $(N:K)\in L$ be prime for every $K\nleqslant N$ in $M$. Assume that $abK\leqslant N$ and $bK\nleqslant N$ for $a,\ b\in L$. Then $K\nleqslant N$ and so $(N:K)$ is prime. Since $ab\leqslant (N:K)$, $b\nleqslant (N:K)$ and $(N:K)$ is prime, we have $a\leqslant (N:K)$. Thus $aK\leqslant N$ and hence $N\in M$ is classical prime.
   
   \textcircled{2} It is obvious.
   
   \textcircled{3} Let $K\in M$ and $r\in L$ be proper elements such that $rK\nleqslant N$. Then $K\nleqslant N$ and so by \textcircled{1}, $(N:K)$ is prime. Assume that $a\leqslant (N:rK)$. Then $ar\leqslant (N:K)$. Since $r\nleqslant (N:K)$ and $(N:K)$ is prime, we have $a\leqslant (N:K)$. Therefore $(N:rK)\leqslant (N:K)$ and hence $(N:K)=(N:rK)$, since $(N:K)\leqslant (N:rK)$. 
   
   \textcircled{4} Let a proper element $N\in M$ be classical prime. Let $X,\ Y\in M$ be such that $X\nleqslant N$ and $Y\nleqslant N$. Then by \textcircled{1}, we have $(N:X)$ and $(N:Y)$ are prime elements of $L$. Since $M$ is a multiplication lattice $L$-module, there exists $x\in L$ such that $X=xI_M$. Without loss of generality, assume that $(N:Y)\nleqslant (N:X)$. Let $a\leqslant (N:X)$. Then $axI_M\leqslant N$. Since $N$ is classical prime and $xI_M=X\nleqslant N$, we have $aI_M\leqslant N$ which implies $a\leqslant (N:I_M)\leqslant (N:Y)$. Thus $(N:X)\leqslant (N:Y)$. Hence $\{(N:K)\mid K\nleqslant N \}$ is a chain of prime elements of $L$.
   \end{proof}
   
   In the following corollary, we show that a classical prime element $N\in M$ is prime if $N$ is primary.
   \begin{cor}
   A proper element $N$ of an $L$-module $M$ is prime if and only if $N$ is both primary and classical prime.
   \end{cor}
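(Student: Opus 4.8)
\textit{Proof proposal.} The statement is an equivalence, so I would treat the two directions separately, and I expect both to be short.

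For the forward direction, suppose $N$ is prime. That $N$ is primary is immediate from the definitions: if $aX\leqslant N$ then either $X\leqslant N$ or $aI_M\leqslant N$, and in the latter case $a^{1}I_M\leqslant N$, which is the primary condition with $n=1$. To see that $N$ is classical prime, I would take $a,b\in L$ and $K\in M$ with $abK\leqslant N$, rewrite this as $a(bK)\leqslant N$, and apply primeness to conclude $bK\leqslant N$ or $aI_M\leqslant N$; in the second case $aK\leqslant aI_M\leqslant N$ since $K\leqslant I_M$. Either way $aK\leqslant N$ or $bK\leqslant N$, as required.

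For the converse, assume $N$ is both primary and classical prime, take $a\in L$ and $X\in M$ with $aX\leqslant N$, and aim to show $X\leqslant N$ or $aI_M\leqslant N$. Since $N$ is primary, either $X\leqslant N$ (and we are done) or $a^{n}I_M\leqslant N$ for some $n\in Z_+$; in the second case I would choose $n$ minimal with this property. If $n=1$ the conclusion $aI_M\leqslant N$ is immediate. If $n\geqslant 2$, I would write $a^{n}I_M=a\cdot a^{n-1}\cdot I_M\leqslant N$ and invoke the classical prime property of $N$ with the lattice elements $a$ and $a^{n-1}$ and the module element $I_M$, obtaining $aI_M\leqslant N$ or $a^{n-1}I_M\leqslant N$; the latter contradicts minimality of $n$, so $aI_M\leqslant N$. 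Hence $N$ is prime.

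I do not anticipate a genuine obstacle here. The only step needing a moment's care is the descent in the converse: one must notice that applying the classical prime condition with $K=I_M$ and with $a^{n-1}$ in the role of the second lattice element strips off a single factor of $a$ from $a^{n}I_M\leqslant N$, and that iterating this (equivalently, using the minimal such $n$) forces the exponent down to $1$. Everything else is a direct unwinding of the definitions of prime, primary, and classical prime elements of $M$ recalled earlier in the paper.
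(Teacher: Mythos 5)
Your proof is correct and follows essentially the same route as the paper: the paper also reduces $a^nI_M\leqslant N$ to $aI_M\leqslant N$ via classical primeness, merely packaging that step as the earlier result that $(N:I_M)$ is a prime element of $L$ (so $a^n\leqslant (N:I_M)$ forces $a\leqslant (N:I_M)$), whereas you perform the same descent directly with a minimal exponent. The forward direction is, as you and the paper both note, immediate from the definitions.
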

   \begin{proof}
   Let $N\in M$ be primary and classical prime. Assume that $rX\leqslant N$ and $X\nleqslant N$ for $r\in L$, $X\in M$. Then as $N$ is classical prime, by Theorem $\ref{T-C78}$-\textcircled{2}, we have $(N:I_M)$ is prime. Also, as $N$ is primary, we have $r^n\leqslant (N:I_M)$ for some $n\in Z_+$ which implies $r\leqslant (N:I_M)$ and hence $N$ is prime. The converse part is obvious.
   \end{proof}
   
   In a multiplication lattice $L$-module $M$, the concepts of classical prime and prime elements coincide as shown below. 
   \begin{thm}
   A proper element $N$ of a multiplication lattice $L$-module $M$ is prime if and only if $N$ is classical prime.
   \end{thm}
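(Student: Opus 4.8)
The plan is to prove the two implications separately, with the forward implication being essentially immediate and the reverse implication exploiting the multiplication-module structure to reduce a general element $X \in M$ to one of the form $xI_M$.

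First I would dispose of the direction ``prime $\Rightarrow$ classical prime'', which holds in any $L$-module $M$ and was already observed in the text preceding Theorem~\ref{T-C76}: if $N$ is prime and $abK \leqslant N$ with $a,b \in L$, $K \in M$, then writing $(ab)K \leqslant N$ and using primeness gives either $K \leqslant N$ (hence $aK, bK \leqslant N$) or $abI_M \leqslant N$; in the latter case, applying primeness again to $a(bI_M) \leqslant N$ yields $bI_M \leqslant N$ or $aI_M \leqslant N$, so that $bK \leqslant bI_M \leqslant N$ or $aK \leqslant aI_M \leqslant N$. In all cases $aK \leqslant N$ or $bK \leqslant N$, so $N$ is classical prime.

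For the reverse direction, suppose $N$ is classical prime and let $a \in L$, $X \in M$ satisfy $aX \leqslant N$ with $X \nleqslant N$; I must show $aI_M \leqslant N$. Since $M$ is a multiplication lattice $L$-module, there is $x \in L$ with $X = xI_M$. Then $a(xI_M) \leqslant N$, i.e.\ $(ax)I_M \leqslant N$, which I rewrite as $ax\,K \leqslant N$ with $K = I_M$. Because $N$ is classical prime, this forces either $aK = aI_M \leqslant N$ or $xK = xI_M = X \leqslant N$. The second alternative is excluded by the hypothesis $X \nleqslant N$, so $aI_M \leqslant N$, and $N$ is prime.

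I do not anticipate any real obstacle here: the reverse implication is a one-line application of the classical prime condition with $K = I_M$, once the representation $X = xI_M$ is invoked, and the forward implication is routine. The only point to be careful about is stating the forward direction cleanly (or simply citing the remark already made in the paper that every prime element of $M$ is classical prime), so that the whole proof reads as a short ``The converse is clear; conversely, \ldots'' argument.
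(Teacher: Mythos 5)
Your proposal is correct. The forward direction you give is the standard two-step application of primeness (first to $(ab)K\leqslant N$, then to $a(bI_M)\leqslant N$) and agrees with the remark, made before Theorem~\ref{T-C76}, that every prime element is classical prime; the paper simply declares this direction obvious. For the converse your route differs from the paper's: you write $X=xI_M$ using the multiplication hypothesis and apply the definition of classical prime directly with $K=I_M$ and the scalars $a$ and $x$, so that $(ax)I_M\leqslant N$ forces $aI_M\leqslant N$ or $X=xI_M\leqslant N$. The paper instead passes to residuals, showing $(aI_M:I_M)(X:I_M)\leqslant (aX:I_M)\leqslant (N:I_M)$ and then invoking the characterization of Theorem~\ref{T-C77} before multiplying back by $I_M$. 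Your argument is more elementary: it uses only the definition together with the representation $X=xI_M$, needs no prior characterization theorem, and also sidesteps the small awkwardness that Theorem~\ref{T-C77}\textcircled{2} is stated for \emph{proper} elements $A$, $K$, $B$ while the paper applies it with $K=I_M$. What the paper's route buys is consistency with its residual-based characterizations; what yours buys is brevity and self-containedness. Both are valid.
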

   \begin{proof}
   If $N$ is prime then obviously, $N$ is classical prime. Conversely, let $N$ be classical prime. Assume that $aX\leqslant N$ for $a\in L$, $X\in M$. Then $(aX:I_M)\leqslant (N:I_M)$. But $(aI_M:I_M)(X:I_M)\leqslant (aX:I_M)$ and so $(aI_M:I_M)(X:I_M)\leqslant (N:I_M)$. As $N$ is classical prime, by Theorem \ref{T-C77}, we have either $(aI_M:I_M)\leqslant (N:I_M)$ or $(X:I_M)\leqslant (N:I_M)$ which implies either $(aI_M:I_M)I_M\leqslant (N:I_M)I_M$ or $(X:I_M)I_M\leqslant (N:I_M)I_M$. Since $M$ is a multiplication lattice $L$-module, we get either $aI_M\leqslant N$ or $X\leqslant N$ and thus $N$ is prime.
   \end{proof}
   
   \begin{cor}
   Every maximal element of an $L$-module $M$ is classical prime.
   \end{cor}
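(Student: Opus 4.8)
The plan is to check the definition of classical prime directly, exploiting only maximality and the monotonicity of the module action. Since a maximal element is by definition proper, the only thing to verify is the implication: for all $K\in M$ and $a,b\in L$, if $abK\leqslant N$ then $aK\leqslant N$ or $bK\leqslant N$. So I would fix such $a,b,K$ with $abK\leqslant N$, assume $aK\nleqslant N$, and aim to derive $bK\leqslant N$.

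The decisive step is to look at the element $N\vee aK$ of $M$. Because $aK\nleqslant N$ we have $N<N\vee aK$, and maximality of $N$ forces $N\vee aK=I_M$. Now I would multiply through by $b$ and use join-distributivity of the action: $bI_M=b(N\vee aK)=bN\vee b(aK)=bN\vee (ab)K$. Since $b\leqslant 1$ implies $bN\leqslant 1N=N$, and since $(ab)K=abK\leqslant N$ by hypothesis, this gives $bI_M\leqslant N$, whence $bK\leqslant bI_M\leqslant N$. That completes the verification, so $N$ is classical prime.

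I do not expect a genuine obstacle here; the only points needing a word of care are that ``maximal'' already includes ``proper'', and that $cD\leqslant D$ for every $c\in L$, $D\in M$, which follows from $c\vee 1=1$ together with the identity $1D=(c\vee 1)D=cD\vee D$. It is worth noting that the direct argument is preferable to trying to deduce that $(N:I_M)$ is maximal in $L$ (which would give primeness, hence classical primeness, via Proposition 2 of \cite{CT}), because that deduction is not available for a general, non-multiplication $L$-module; the computation above sidesteps $(N:I_M)$ entirely and in fact shows that every maximal element of $M$ is prime, a fortiori classical prime.
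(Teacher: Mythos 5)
Your argument is correct, and it supplies exactly the details that the paper omits (its proof is just ``The proof is obvious''): from $aK\nleqslant N$ and maximality you get $N\vee aK=I_M$, and then $bI_M=bN\vee abK\leqslant N$ gives $bK\leqslant N$. As you observe, the same computation with $X$ in place of $aK$ shows every maximal element is prime, which together with the paper's earlier remark that prime implies classical prime is evidently the intended route, so your proof is essentially the paper's.
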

   \begin{proof}
   The proof is obvious.
   \end{proof}
   
   \begin{thm}
   Let $N$ be a proper element of an $L$-module $M$ such that $(N:X)\neq (N:Y)$ implies $N=(N\vee X)\wedge (N\vee Y)$ for $X,\ Y\in M$. Then $N$ is classical prime.
   \end{thm}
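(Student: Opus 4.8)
The plan is to verify the defining condition of a classical prime element directly: given $a,b\in L$ and $K\in M$ with $abK\leqslant N$, I want either $aK\leqslant N$ or $bK\leqslant N$. I will argue by contradiction, assuming $aK\nleqslant N$ and $bK\nleqslant N$, and then use the hypothesis on a cleverly chosen pair of elements of $M$ to force $bK\leqslant N$, contradicting the assumption. Throughout I would freely use the elementary facts valid in any $L$-module: $cA\leqslant A$ for all $c\in L$, $A\in M$ (because $c\leqslant 1$); monotonicity of the multiplication; associativity $(cd)A=c(dA)$; and that $(N:A)A\leqslant N$ for the residual.

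The key computation is to show $(N:K)\neq (N:bK)$. First, $(N:K)\leqslant (N:bK)$: if $xK\leqslant N$ then $xbK=b(xK)\leqslant bN\leqslant N$, so $x\leqslant (N:bK)$, and taking the join over all such $x$ gives the inequality. Second, $a\leqslant (N:bK)$ directly from $abK\leqslant N$, whereas $a\nleqslant (N:K)$, since $a\leqslant (N:K)$ would yield $aK\leqslant (N:K)K\leqslant N$, contrary to our assumption. Hence $(N:K)$ and $(N:bK)$ are distinct. Now I apply the hypothesis with $X=K$ and $Y=bK$ to obtain $N=(N\vee K)\wedge (N\vee bK)$. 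Finally, $bK\leqslant K\leqslant N\vee K$ and $bK\leqslant N\vee bK$, so $bK\leqslant (N\vee K)\wedge (N\vee bK)=N$ — contradicting $bK\nleqslant N$. Therefore one of $aK\leqslant N$, $bK\leqslant N$ must hold, and since $N$ is proper by hypothesis, $N$ is classical prime.

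The only genuinely non-mechanical step is choosing the right inputs for the hypothesis: the most obvious guesses $X=aK,\ Y=bK$ do not close the argument, because from $N=(N\vee aK)\wedge(N\vee bK)$ one only gets $aK\wedge(N\vee bK)\leqslant N$, which is weaker than $aK\leqslant N$. The choice $X=K,\ Y=bK$ (or symmetrically $X=K,\ Y=aK$) works precisely because $bK\leqslant K$, so $bK$ automatically sits below $N\vee K$, and the meet collapses to $N$. Everything else is routine residual manipulation that does not require $M$ to be a multiplication lattice module.
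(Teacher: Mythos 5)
Your proof is correct and follows essentially the same route as the paper: both arguments observe that $a\leqslant (N:bK)$ while $a\nleqslant (N:K)$ forces $(N:K)\neq (N:bK)$, apply the hypothesis to the pair $K$, $bK$, and conclude $bK\leqslant (N\vee K)\wedge(N\vee bK)=N$ using $bK\leqslant K$. The only difference is that you phrase it as a contradiction while the paper derives $bK\leqslant N$ directly; the extra verification that $(N:K)\leqslant (N:bK)$ is harmless but not needed.
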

   \begin{proof}
   Let $abK\leqslant N$ and $aK\nleqslant N$ for $a,\ b\in L$, $K\in M$. Then $a\leqslant (N:bK)$ and $a\nleqslant (N:K)$ which implies $(N:bK)\neq (N:K)$. So by hypothesis, $N=(N\vee bK)\wedge (N\vee K)$. Clearly, $bK\leqslant (N\vee bK)\wedge (N\vee K)=N$. Thus $N$ is classical prime.
   \end{proof}
   
   In earlier characterizations, namely Theorem $\ref{T-C76}$ and Theorem $\ref{T-C77}$, $M$ is a  multiplication lattice $L$-module. The following theorem gives the characterization of classical prime elements of an $L$-module $M$ where $M$ need not be a multiplication lattice $L$-module.
   
   \begin{thm}
   Let $M$ be a CG lattice $L$-module and $N$ be a proper element of $M$. Then the following statements are equivalent:
   
    \textcircled{1} $N$ is classical prime.
    
    \textcircled{2} for every $a,\ b\in L$, either $(N:ab)=(N:a)$ or $(N:ab)=(N:b)$.
    
    \textcircled{3} for every $r,\ s\in L_\ast$, $K\in M_\ast$, if $rsK\leqslant N$ then either $rK\leqslant N$ or 
    $sK\leqslant N$. 
   \end{thm}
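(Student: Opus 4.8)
The plan is to prove the cycle $\textcircled{1}\Rightarrow\textcircled{2}\Rightarrow\textcircled{1}$ together with $\textcircled{1}\Rightarrow\textcircled{3}\Rightarrow\textcircled{1}$. Only the last step will invoke the hypothesis that $L$ and $M$ are compactly generated; the equivalence $\textcircled{1}\Leftrightarrow\textcircled{2}$ holds in an arbitrary $L$-module. Throughout I shall freely use that $bN\leqslant 1N=N$ for every $b\in L$, that the multiplication is monotone in each argument, and that $cX\leqslant N$ is equivalent to $X\leqslant(N:c)$ for $c\in L$ and $X\in M$.

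For $\textcircled{1}\Rightarrow\textcircled{2}$, fix $a,b\in L$. If $aX\leqslant N$ then $abX=b(aX)\leqslant bN\leqslant N$, so $(N:a)\leqslant(N:ab)$, and symmetrically $(N:b)\leqslant(N:ab)$; hence it suffices to show that $(N:ab)\leqslant(N:a)$ or $(N:ab)\leqslant(N:b)$. Assuming neither, pick $X_1\leqslant(N:ab)$ with $X_1\nleqslant(N:a)$ and $X_2\leqslant(N:ab)$ with $X_2\nleqslant(N:b)$; then $ab(X_1\vee X_2)\leqslant N$, and classical primeness forces $a(X_1\vee X_2)\leqslant N$ or $b(X_1\vee X_2)\leqslant N$, the first contradicting $aX_1\nleqslant N$ and the second contradicting $bX_2\nleqslant N$. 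Conversely, for $\textcircled{2}\Rightarrow\textcircled{1}$, if $abK\leqslant N$ then $K\leqslant(N:ab)$, which by $\textcircled{2}$ equals $(N:a)$ or $(N:b)$, giving $aK\leqslant N$ or $bK\leqslant N$.

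The implication $\textcircled{1}\Rightarrow\textcircled{3}$ is merely the definition of classical prime applied to compact $r,s\in L_\ast$ and $K\in M_\ast$. For $\textcircled{3}\Rightarrow\textcircled{1}$, let $abK\leqslant N$ with $a,b\in L$, $K\in M$, and suppose for contradiction that $aK\nleqslant N$ and $bK\nleqslant N$. Writing $a=\vee\{c\in L_\ast\mid c\leqslant a\}$, $b=\vee\{d\in L_\ast\mid d\leqslant b\}$ and $K=\vee\{D\in M_\ast\mid D\leqslant K\}$, join-distributivity of the product shows that $aK$ is the join of all products $cD$ with $c\leqslant a$ compact in $L$ and $D\leqslant K$ compact in $M$, so some such product satisfies $c_0D_1\nleqslant N$, and likewise $d_0D_2\nleqslant N$ for some compact $d_0\leqslant b$ and compact $D_2\leqslant K$. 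Put $K'=D_1\vee D_2$, which is again a compact element of $M$ with $K'\leqslant K$. Then $c_0d_0K'\leqslant abK\leqslant N$, whereas $c_0K'\geqslant c_0D_1\nleqslant N$ and $d_0K'\geqslant d_0D_2\nleqslant N$, contradicting $\textcircled{3}$. Therefore $aK\leqslant N$ or $bK\leqslant N$, so $N$ is classical prime.

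I expect the only delicate point to be this last reduction: one must pass to a single compact element $K'=D_1\vee D_2$ that simultaneously witnesses $c_0K'\nleqslant N$ and $d_0K'\nleqslant N$, and one must note that a finite join of compact elements of $M$ is again compact and that $c_0d_0K'\leqslant abK$ by monotonicity (using $c_0\leqslant a$, $d_0\leqslant b$, $K'\leqslant K$). This is the same compactness device used in the proof of $\textcircled{4}\Rightarrow\textcircled{1}$ in the earlier characterization of pseudo-primary elements; all remaining steps are routine manipulations of residuals.
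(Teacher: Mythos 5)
Your proof is correct and uses essentially the same ingredients as the paper's: residual manipulation (in effect taking $K=(N:ab)$) for $\textcircled{1}\Leftrightarrow\textcircled{2}$, and the compact-witness/finite-join device for $\textcircled{3}\Rightarrow\textcircled{1}$. The only differences are organizational — you close the two loops $\textcircled{1}\Leftrightarrow\textcircled{2}$ and $\textcircled{1}\Leftrightarrow\textcircled{3}$ where the paper runs the cycle $\textcircled{1}\Rightarrow\textcircled{2}\Rightarrow\textcircled{3}\Rightarrow\textcircled{1}$, and you argue the compactness step by contradiction with two witnesses $D_1,D_2$ where the paper fixes one witness $Y'$ and lets the other compact elements vary to conclude $bX\leqslant N$ directly.
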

   \begin{proof}\textcircled{1}$\Longrightarrow$\textcircled{2}. Suppose \textcircled{1} holds and let $a,\ b\in L$. As $ab\leqslant a$ and $ab\leqslant b$, we have $(N:a)\leqslant (N:ab)$ and $(N:b)\leqslant (N:ab)$. Now let $K\leqslant (N:ab)$. Then $abK\leqslant N$. Since $N$ is classical prime, we have either $aK\leqslant N$ or $bK\leqslant N$ which implies either $K\leqslant (N:a)$ or $K\leqslant (N:b)$. Thus either $(N:ab)\leqslant (N:a)$ or $(N:ab)\leqslant (N:b)$ and hence either $(N:ab)=(N:a)$ or $(N:ab)=(N:b)$.
   
   \textcircled{2}$\Longrightarrow$\textcircled{3}. Suppose \textcircled{2} holds. Let $rsK\leqslant N$ for $r,\ s\in L_\ast$, $K\in M_\ast$. Then by \textcircled{2}, we have either $(N:rs)=(N:r)$ or $(N:rs)=(N:s)$. So $K\leqslant (N:rs)$ implies either $K\leqslant (N:r)$ or $K\leqslant (N:s)$. Thus either $rK\leqslant N$ or $sK\leqslant N$.
   
   \textcircled{3}$\Longrightarrow$\textcircled{1}. Suppose \textcircled{3} holds.
   Let $abX\leqslant N$ and $aX\nleqslant N$ for $a,\ b\in L,\ X\in M$. As $L$ and $M$ are compactly generated, there exist $r,\ s\in L_\ast$, $Y'\in M_\ast$ such that $r\leqslant a,\ s\leqslant b$, $Y'\leqslant X$ and $rY'\nleqslant N$. Let $Y\in M_\ast$ be such that $Y\leqslant X$ which implies $sY\leqslant bX$. Then $r,\ s\in L_\ast,\ (Y\vee Y')\in M_\ast$ such that $rs(Y\vee Y')\leqslant abX\leqslant N,\ r(Y\vee Y')\nleqslant N$. So by \textcircled{3}, $s(Y\vee Y')\leqslant N$ which implies $sY\leqslant N$ and hence $bX\leqslant N$. Therefore $N$ is classical prime.
   \end{proof}
   
   In the next theorem, we show that the meet and join of a family of ascending chain of classical prime elements of $M$ are again classical prime.
   
   \begin{thm}
   Let $\{N_i\mid i\in Z_+\}$ be a (ascending or descending) chain of classical prime elements of an $L$-module $M$. Then 
          
   \textcircled{1} $\underset{i\in Z_+}{\wedge}N_i$ is a classical prime element of $M$.
          
   \textcircled{2} $\underset{i\in Z_+}{\vee}N_i$ is a classical prime element of $M$ if $I_M$ is compact.
   \end{thm}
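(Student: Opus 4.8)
The plan is to mirror the proof of the analogous statement for pseudo-primary chains, replacing the pseudo-primary test by the classical-prime test together with its reformulation in Theorem \ref{T-C78}-\textcircled{1}: a proper $N\in M$ is classical prime if and only if $(N:K)\in L$ is prime for every $K\nleqslant N$ in $M$. Throughout I will use the identity $(N:K)K\leqslant N$, which follows from module axiom \textcircled{1}, so that $aK\nleqslant N$ forces $a\nleqslant(N:K)$. By the symmetry of the hypothesis it suffices to treat an ascending chain $N_1\leqslant N_2\leqslant\cdots\leqslant N_i\leqslant\cdots$; the descending case is dual.

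For \textcircled{1}, note $\underset{i\in Z_+}{\wedge}N_i\leqslant N_1<I_M$, so the meet is proper. Suppose $abK\leqslant\underset{i\in Z_+}{\wedge}N_i$ with $aK\nleqslant\underset{i\in Z_+}{\wedge}N_i$ for $a,b\in L$, $K\in M$. Then $aK\nleqslant N_j$ for some $j$; in particular $K\nleqslant N_j$, so $(N_j:K)$ is prime and $a\nleqslant(N_j:K)$, while $abK\leqslant N_j$ gives $ab\leqslant(N_j:K)$, hence $b\leqslant(N_j:K)$, i.e.\ $bK\leqslant N_j$. Now fix any $i$: if $N_j\leqslant N_i$ then $bK\leqslant N_j\leqslant N_i$; if $N_i\leqslant N_j$ then $aK\nleqslant N_i$ (otherwise $aK\leqslant N_i\leqslant N_j$), so $(N_i:K)$ is prime, $ab\leqslant(N_i:K)$, $a\nleqslant(N_i:K)$, and again $bK\leqslant N_i$. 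Thus $bK\leqslant N_i$ for all $i$, so $bK\leqslant\underset{i\in Z_+}{\wedge}N_i$, and the meet is classical prime. (Equivalently, one can observe $(\underset{i\in Z_+}{\wedge}N_i:K)=\underset{i\in Z_+}{\wedge}(N_i:K)$, which after discarding the indices with $K\leqslant N_i$ is the meet of the chain $\{(N_i:K)\mid K\nleqslant N_i\}$ of prime elements of $L$, and the meet of a chain of primes is prime.)

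For \textcircled{2}, since $I_M$ is compact and each $N_i<I_M$, the join $\underset{i\in Z_+}{\vee}N_i$ cannot equal $I_M$: otherwise $I_M\leqslant N_{i_1}\vee\cdots\vee N_{i_n}$, which for a chain is one of the $N_i$, contradicting properness. Suppose $abK\leqslant\underset{i\in Z_+}{\vee}N_i$ with $aK\nleqslant\underset{i\in Z_+}{\vee}N_i$ for $a,b\in L$, $K\in M$. Then $aK\nleqslant N_i$ for every $i$, hence $K\nleqslant N_i$ and, by Theorem \ref{T-C78}, $(N_i:K)$ is prime for every $i$. Using that $\{N_i\}$ is a chain, $abK\leqslant N_j$ for some $j$; then $ab\leqslant(N_j:K)$, which is prime, so $a\leqslant(N_j:K)$ or $b\leqslant(N_j:K)$, that is $aK\leqslant N_j\leqslant\underset{i\in Z_+}{\vee}N_i$ or $bK\leqslant N_j\leqslant\underset{i\in Z_+}{\vee}N_i$. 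The first contradicts $aK\nleqslant\underset{i\in Z_+}{\vee}N_i$, so $bK\leqslant\underset{i\in Z_+}{\vee}N_i$, proving the join is classical prime.

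The properness of the meet and the join and the bookkeeping with $(N:K)K\leqslant N$ are routine. The step that deserves genuine care is in \textcircled{2}: the passage from $abK\leqslant\underset{i\in Z_+}{\vee}N_i$ to $abK\leqslant N_j$ for a single index $j$. This is immediate if $abK$ is compact, and, exactly as in the proof of the pseudo-primary analogue, this is where the chain hypothesis is used; justifying it in full generality (or pinning down the precise hypothesis under which it holds) is the main obstacle, and everything else follows the template above.
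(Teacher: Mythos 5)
Your proof takes essentially the same route as the paper's: the same properness observations, the same case analysis along the chain for the meet (your detour through Theorem \ref{T-C78}-\textcircled{1} instead of the bare definition is immaterial), and the same reduction to a single $N_j$ for the join. The step you rightly flag in \textcircled{2} --- passing from $abK\leqslant\underset{i\in Z_+}{\vee}N_i$ to $abK\leqslant N_j$ for one index $j$ --- is asserted in the paper's own proof with no more justification than ``as $\{N_i\}$ is a chain'' (and likewise in its pseudo-primary analogue), so your reservation applies equally to the published argument; it genuinely requires compactness of $abK$ or some comparable hypothesis, since for an infinite ascending chain an element below the join need not lie below any single member.
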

   \begin{proof}
   Let $N_1\leqslant N_2\leqslant\cdots \leqslant N_i\leqslant\cdots$ be an ascending chain of  classical prime elements of $M$.
          
   \textcircled{1}. Clearly, $(\underset{i\in Z_+}{\wedge}N_i)\neq I_M$. Let $abQ\leqslant  (\underset{i\in Z_+}{\wedge}N_i)$ and $aQ\nleqslant  (\underset{i\in Z_+}{\wedge}N_i)$ for $a,\ b\in L$, $Q\in M$. Then $aQ\nleqslant N_j$ for some $j\in Z_+$ but $abQ\leqslant N_j$ which implies  $bQ\leqslant N_j$ as $N_j$ is a classical prime element. Now let $N_i\neq N_j$. Then as $\{N_i\}$ is a chain, we have either $N_i<N_j$ or $N_j<N_i$. If $N_i<N_j$ then as $N_i$ is a classical prime element, $abQ\leqslant N_i$ and $aQ\nleqslant N_i$, we have $bQ\leqslant N_i$. If $N_j<N_i$ then $bQ\leqslant N_j< N_i$. Thus $bQ\leqslant \underset{i\in Z_+}{\wedge}N_i$ which proves that $\underset{i\in Z_+}{\wedge}N_i$ is a classical prime element of $M$. 
          
   \textcircled{2}. Since $I_M$ is compact, $(\underset{i\in Z_+}{\vee}N_i)\neq I_M$. Let $abQ\leqslant (\underset{i\in Z_+}{\vee}N_i)$ and $aQ\nleqslant  (\underset{i\in Z_+}{\vee}N_i)$ for $a,\ b\in L$, $Q\in M$. Then as $\{N_i\}$ is a chain, we have $abQ\leqslant N_j$ for some $j\in Z_+$ but $aQ\nleqslant N_j$ which implies $bQ\leqslant N_j \leqslant ({\underset{i\in  Z_+}{\vee}N_i)}$ as $N_j$ is a classical prime element and thus $\underset{i\in Z_+}{\vee}N_i$ is a classical prime element of $M$.
   \end{proof}
   
   Clearly, every classical prime element is a semiprime element. In view of Theorem $\ref{T-C78}$-\textcircled{1}, we have the following result.
   
   \begin{thm}
   A proper element $N$ of an $L$-module $M$ is semiprime if and only if $(N:I_M)\in L$ is prime.
   \end{thm}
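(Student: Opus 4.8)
The plan is to unwind both definitions and observe that the equivalence is essentially the colon adjunction $xB\leqslant A \iff x\leqslant (A:B)$ applied with $B=I_M$. First I would record the trivial bookkeeping fact linking properness on the two sides: $N<I_M$ is equivalent to $(N:I_M)<1$, since $(N:I_M)=1$ would force $I_M=1I_M\leqslant N$. Thus $N$ proper in $M$ corresponds exactly to $(N:I_M)$ proper in $L$, and there is no degenerate case to treat separately.

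For the forward implication, suppose $N$ is semiprime and take $a,b\in L$ with $ab\leqslant (N:I_M)$. Then $abI_M\leqslant N$, so semiprimeness gives $aI_M\leqslant N$ or $bI_M\leqslant N$; translating back through the colon yields $a\leqslant (N:I_M)$ or $b\leqslant (N:I_M)$. Together with the properness observation, $(N:I_M)$ is a prime element of $L$.

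For the converse, suppose $(N:I_M)$ is prime and take $a,b\in L$ with $abI_M\leqslant N$. Then $ab\leqslant (N:I_M)$, and primeness gives $a\leqslant (N:I_M)$ or $b\leqslant (N:I_M)$, i.e. $aI_M\leqslant N$ or $bI_M\leqslant N$, so $N$ is semiprime.

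There is no genuine obstacle here; the semiprime condition is precisely the classical prime condition of Theorem \ref{T-C78}-\textcircled{1} tested only at $K=I_M$ (note $I_M\nleqslant N$ since $N$ is proper), so one could alternatively cite that theorem with $K=I_M$. The only point requiring a moment's care is the direction of the adjunction $xI_M\leqslant N \iff x\leqslant (N:I_M)$, which is immediate from the definition $(N:I_M)=\vee\{x\in L\mid xI_M\leqslant N\}$ together with $1I_M=I_M$; spelling the argument out directly, as above, seems cleanest.
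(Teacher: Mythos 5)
Your proof is correct and is exactly the argument the paper has in mind (the paper dismisses it with ``The proof is obvious,'' immediately after remarking that the result follows in view of Theorem \ref{T-C78}\textcircled{1}, i.e.\ the $K=I_M$ instance of that equivalence). Both directions reduce to the adjunction $xI_M\leqslant N\iff x\leqslant(N:I_M)$, and your preliminary check that $N<I_M$ forces $(N:I_M)<1$ is the right detail to record.
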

   \begin{proof}
   The proof is obvious.
   \end{proof}
   
   We conclude this section by the following corollary.
   
   \begin{cor}
   Let $N$ be a proper element of a multiplication lattice $L$-module $M$. Then the following statements are equivalent:
   
   \textcircled{1}. $N$ is a prime element of $M$.
   
   \textcircled{2}. $N$ is a classical prime element of $M$.
   
   \textcircled{3}. $N$ is a semiprime element of $M$.
   
   \textcircled{4}. $(N:I_M)$ is a prime element of $L$.
   \end{cor}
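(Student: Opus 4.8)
The plan is to assemble the equivalence from results already established in this section together with Lemma~\ref{L-C71}, so that no genuinely new argument is needed; only the bookkeeping of closing a cycle of implications is required.

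First I would record the backbone equivalences. The equivalence \textcircled{1} $\Longleftrightarrow$ \textcircled{4} is precisely Lemma~\ref{L-C71}: in a multiplication lattice $L$-module, $N$ is prime if and only if $(N:I_M)$ is a prime element of $L$. The equivalence \textcircled{1} $\Longleftrightarrow$ \textcircled{2} is the theorem proved above asserting that in a multiplication lattice $L$-module a proper element is prime if and only if it is classical prime (whose proof passed through Theorem~\ref{T-C77}). Finally, \textcircled{3} $\Longleftrightarrow$ \textcircled{4} is the theorem stating that a proper element $N$ is semiprime if and only if $(N:I_M)\in L$ is prime; that result needs no multiplication hypothesis and so applies here a fortiori.

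With these three two-way implications in hand the corollary follows formally, since \textcircled{1} $\Longleftrightarrow$ \textcircled{2}, \textcircled{1} $\Longleftrightarrow$ \textcircled{4} and \textcircled{4} $\Longleftrightarrow$ \textcircled{3} together link all four statements. I would present it as the short cycle \textcircled{2} $\Longrightarrow$ \textcircled{1} $\Longrightarrow$ \textcircled{4} $\Longrightarrow$ \textcircled{3} $\Longrightarrow$ \textcircled{2}, where the last arrow is obtained by running \textcircled{3} $\Longrightarrow$ \textcircled{4} $\Longrightarrow$ \textcircled{1} $\Longrightarrow$ \textcircled{2}; alternatively one may simply cite the three named equivalences and conclude by transitivity.

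The step requiring the most care is merely checking that the standing hypotheses align: each cited result except the semiprime characterization carries the assumption that $M$ is a multiplication lattice $L$-module, which is exactly what is assumed in the corollary, so nothing extra is needed. There is thus no substantive obstacle; the content was already proved in the earlier theorems and this corollary is their common refinement. As a cross-check one can also note that \textcircled{2} $\Longrightarrow$ \textcircled{3} is immediate from the earlier remark that every classical prime element is semiprime, which gives an independent forward route not relying on the semiprime theorem.
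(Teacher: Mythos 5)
Your proposal is correct and matches the paper's intent exactly: the paper dismisses this corollary with ``The proof is obvious,'' precisely because it is the formal assembly of Lemma~\ref{L-C71} (\textcircled{1}$\Leftrightarrow$\textcircled{4}), the preceding theorem that prime and classical prime coincide in a multiplication lattice $L$-module (\textcircled{1}$\Leftrightarrow$\textcircled{2}), and the semiprime characterization (\textcircled{3}$\Leftrightarrow$\textcircled{4}), which is exactly the cycle you close. Nothing further is needed.
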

   \begin{proof}
   The proof is obvious.
   \end{proof}            
 
 \section{Pseudo-classical primary elements of $M$}
 
 H.~M.~Fazaeli et.~al.~in $\cite{FRS}$ introduced the concept of a weakly primary-like submodule of an $R$-module $M$. In this section, we define a similar term in lattice modules. Instead of calling weakly primary-like element of an $L$-module $M$, we shall call such elements, pseudo-classical primary.
 
 \begin{defn}
 A proper element $N$ of an $L$-module $M$ is said to be {\bf pseudo-classical primary} if for all $K\in M$, $a,\ b\in L$, $abK\leqslant N$ implies either $aK\leqslant N$ or $bK\leqslant rad(N)$. 
 \end{defn}
 
  Clearly, every classical prime element of $M$ is pseudo-classical primary.\\
 
 The following theorem gives the characterization of pseudo-classical primary elements of an $L$-module $M$. 
 
 \begin{thm}
 Let $M$ be a CG lattice $L$-module and N be a proper element of $M$. Then the following statements are equivalent:
 \begin{enumerate}
 \item[\textcircled{1}] $N$ is a pseudo-classical primary element of $M$. 
 
 \item[\textcircled{2}] $(N:X)=(N:rX)$ for every proper elements $r\in L$ and $X\in M$ such that $X\nleqslant N$ and $rX\nleqslant rad(N)$.
   
 \item[\textcircled{3}] for every $r,\ s\in L_\ast$, $K\in M_\ast$, if $rsK\leqslant N$ then either $rK\leqslant N$ or
 $sK\leqslant rad(N)$.
 \end{enumerate}
 \end{thm}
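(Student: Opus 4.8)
The plan is to establish the cycle $\textcircled{1}\Longrightarrow\textcircled{2}\Longrightarrow\textcircled{3}\Longrightarrow\textcircled{1}$, mirroring the characterization theorems already proved for pseudo-primary and classical prime elements.

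For $\textcircled{1}\Longrightarrow\textcircled{2}$: given proper $r\in L$, $X\in M$ with $X\nleqslant N$ and $rX\nleqslant rad(N)$, the inclusion $(N:X)\leqslant(N:rX)$ is immediate (if $yX\leqslant N$ then $yrX=r(yX)\leqslant rN\leqslant N$), and for the reverse, $a\leqslant(N:rX)$ gives $arX\leqslant N$, so the defining implication of a pseudo-classical primary element — with ring-elements $a,r$ and module-element $X$ — yields $aX\leqslant N$ or $rX\leqslant rad(N)$; the second alternative is excluded, so $a\leqslant(N:X)$ and the two residuals coincide. For $\textcircled{2}\Longrightarrow\textcircled{3}$: take $r,s\in L_\ast$, $K\in M_\ast$ with $rsK\leqslant N$; if $rK\leqslant N$ we are done, so assume $rK\nleqslant N$, whence $K\nleqslant N$, and since the only obstructions to properness are $s=1$ (which gives $rsK=rK\leqslant N$, a contradiction) and $K=I_M$ (an immediate case), we may take $s$ and $K$ proper. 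If moreover $sK\nleqslant rad(N)$, then $sK\nleqslant N$, so $\textcircled{2}$ applied to the pair $(s,K)$ gives $(N:K)=(N:sK)$; but $r(sK)\leqslant N$ then forces $r\leqslant(N:sK)=(N:K)$, i.e. $rK\leqslant N$, a contradiction. Hence $sK\leqslant rad(N)$.

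The substance of the theorem is $\textcircled{3}\Longrightarrow\textcircled{1}$. Suppose $abX\leqslant N$ with $aX\nleqslant N$; since $M$ is compactly generated it suffices to show $K\leqslant rad(N)$ for every compact $K\leqslant bX$. Fixing such a $K$ and expanding $bX$ and $aX$ as joins of products of compact elements of $L$ with compact elements of $M$ (using that such products are again compact), one chooses $d\in L_\ast$ with $d\leqslant b$ and $Z\in M_\ast$ with $Z\leqslant X$ so that $K\leqslant dZ$, and — because $aX\nleqslant N$ — one chooses $c\in L_\ast$ with $c\leqslant a$ and $W\in M_\ast$ with $W\leqslant X$ so that $cW\nleqslant N$. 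Put $Z'=Z\vee W\in M_\ast$, so $Z'\leqslant X$ and $cdZ'\leqslant abX\leqslant N$. Then $\textcircled{3}$ applied to the compact data $c,d,Z'$ gives $cZ'\leqslant N$ or $dZ'\leqslant rad(N)$; the first fails since $cZ'\geqslant cW$ and $cW\nleqslant N$, so $dZ'\leqslant rad(N)$ and therefore $K\leqslant dZ\leqslant dZ'\leqslant rad(N)$, as needed.

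The only delicate point is this last argument: one must pass to compact witnesses while keeping the non-containment $cW\nleqslant N$ alive — by amalgamating $W$ into $Z'$ — so that exactly one of the two alternatives delivered by $\textcircled{3}$ can survive, and one relies on compactness being preserved by the module multiplication. This is precisely the mechanism already used in the $\textcircled{4}\Longrightarrow\textcircled{1}$ step of the pseudo-primary characterization and the $\textcircled{3}\Longrightarrow\textcircled{1}$ step of the classical prime characterization, so no new machinery is required; the degenerate cases involving $I_M$ or the unit of $L$ are disposed of exactly as there, and $\textcircled{1}\Longrightarrow\textcircled{2}$ and $\textcircled{2}\Longrightarrow\textcircled{3}$ are routine.
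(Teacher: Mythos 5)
Your argument is correct and follows essentially the same route as the paper's: the same residual computation for \textcircled{1}$\Rightarrow$\textcircled{2}, the same application of \textcircled{2} to the pair $(s,K)$ for \textcircled{2}$\Rightarrow$\textcircled{3}, and the same compact-witness amalgamation (joining the witness of $aX\nleqslant N$ into the compact test element before invoking \textcircled{3}) for \textcircled{3}$\Rightarrow$\textcircled{1}, merely phrased via the other contrapositive of the definition. The only cosmetic differences are that you reduce \textcircled{3}$\Rightarrow$\textcircled{1} to showing $K\leqslant rad(N)$ for each compact $K\leqslant bX$ instead of showing $bX\leqslant N$ under $aX\nleqslant rad(N)$, and that you flag the properness of $s$ and $K$ when applying \textcircled{2} (a point the paper silently skips, and which your ``$K=I_M$ is immediate'' remark dismisses a little too quickly, though this does not affect the substance).
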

 \begin{proof}
 \textcircled{1}$\Longrightarrow$\textcircled{2}. Suppose \textcircled{1} holds. Let $X\in M$ and $r\in L$ be proper elements such that $rX\nleqslant rad(N)$ and $X\nleqslant N$. Obviously, $(N:X)\leqslant (N:rX)$. Let $b\leqslant (N:rX)$ for $b\in L$. Then as $rbX\leqslant N$, $rX\nleqslant rad(N)$ and $N$ is pseudo-classical primary, we have $bX\leqslant N$ which implies $b\leqslant (N:X)$. So $(N:rX)\leqslant (N:X)$ and thus $(N:X)=(N:rX)$.
 
 \textcircled{2}$\Longrightarrow$\textcircled{3}. Suppose \textcircled{2} holds. Let $rsK\leqslant N$ and $sK\nleqslant rad(N)$ for $K\in M_\ast$, $r,\ s\in L_\ast$. If $K\leqslant N$ then $rK\leqslant N$ and we are done. So assume that $K\nleqslant N$. Then by \textcircled{2}, we have $(N:K)=(N:sK)$. As $rsK\leqslant N$, it follows that $r\leqslant (N:sK)=(N:K)$ which implies $rK\leqslant N$.
 
 \textcircled{3}$\Longrightarrow$\textcircled{1}. Suppose \textcircled{3} holds. Let $abX\leqslant N$ and $aX\nleqslant rad(N)$ for $a,\ b\in L$, $X\in M$. As $L$ and $M$ are compactly generated, there exist $r,\ s\in L_\ast$ and $Y'\in M_\ast$ such that $r\leqslant a$, $s\leqslant b$, $Y'\leqslant X$ and $rY'\nleqslant rad(N)$. Let $Y\in M_\ast$ be such that $Y\leqslant X$ which implies $sY\leqslant bX$. Then $r,\ s\in L_\ast$, $(Y\vee Y')\in M_\ast$ such that $rs(Y\vee Y')\leqslant abX\leqslant N$ and $r(Y\vee Y')\nleqslant rad(N)$. So by \textcircled{3}, we have $s(Y\vee Y')\leqslant N$ which implies $sY\leqslant N$ and hence $bX\leqslant N$. Therefore $N$ is a pseudo-classical primary element of $M$.
 \end{proof}
 
 \begin{lem}\label{L-C76}
 Let $L$ be a PG-lattice and $M$ be a faithful multiplication PG-lattice $L$-module with $I_M$ compact. If a proper element $q\in L$ is primary then $qI_M$ is a pseudo-classical primary element of $M$. 
 \end{lem}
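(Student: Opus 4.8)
The plan is to deduce this lemma immediately from the pseudo-primary version already proved in Lemma \ref{T-C72}, together with the elementary observation that \emph{every pseudo-primary element of any $L$-module is automatically pseudo-classical primary}. Concretely, I would first record that $qI_M$ is proper, since $q$ is proper in $L$ and $I_M$ is compact, by Theorem 5 of \cite{CT}; and that $qI_M$ is pseudo-primary, by Lemma \ref{T-C72} --- this is the only step that uses the full force of the hypotheses on $L$ and $M$, and its proof already carries out the delicate work with principal elements and the maximal-element alternative of Theorem 4 of \cite{CT}.

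It then remains to verify the pseudo-classical primary condition from pseudo-primariness alone. Suppose $abK \leqslant qI_M$ for $K \in M$ and $a,b \in L$. Viewing this as $a(bK) \leqslant qI_M$ and applying pseudo-primariness of $qI_M$ to the element $bK \in M$ and the scalar $a$, we obtain either $bK \leqslant rad(qI_M)$ --- which is one of the two allowed conclusions --- or $a \leqslant (qI_M:I_M)$. In the latter case, using $K \leqslant I_M$ and the general inequality $(qI_M:I_M)I_M \leqslant qI_M$ (immediate from the definition of the residual), we get $aK \leqslant aI_M \leqslant (qI_M:I_M)I_M \leqslant qI_M$, so again the pseudo-classical primary condition holds. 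Hence $qI_M$ is a pseudo-classical primary element of $M$.

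I do not expect a genuine obstacle: all the substance sits inside Lemma \ref{T-C72}, and the passage from pseudo-primary to pseudo-classical primary is purely formal. Should one prefer a self-contained argument, the alternative is to transcribe the proof of Lemma \ref{T-C72} almost verbatim --- assume $(rad(qI_M):K)\neq 1$ for a principal element $K$, choose a maximal element $m$ of $L$ above it, split into the two cases furnished by Theorem 4 of \cite{CT}, and run the same meet-principal/join-principal computation, the only bookkeeping difference being that one carries the hypothesis $aK \nleqslant qI_M$ in place of $a \nleqslant (qI_M:I_M)$ and concludes $bK \leqslant rad(qI_M)$. The first route is shorter and also makes transparent that this lemma is a formal consequence of Lemma \ref{T-C72}.
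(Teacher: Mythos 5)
Your proposal is correct, and it takes a genuinely different route from the paper. The paper proves Lemma~\ref{L-C76} by transcribing the proof of Lemma~\ref{T-C72} almost verbatim: it assumes $adX\leqslant qI_M$ with $aX\nleqslant qI_M$, takes $dX$ principal, supposes $(rad(qI_M):(dX))\neq 1$, chooses a maximal element above it, and reruns the two-case meet-principal/join-principal computation --- exactly the ``self-contained'' alternative you describe at the end. Your primary route instead factors the lemma as Lemma~\ref{T-C72} plus the purely formal implication that every pseudo-primary element is pseudo-classical primary; that implication is sound (given $abK\leqslant N$, apply pseudo-primariness to the pair $(a,bK)$, and in the case $a\leqslant (N:I_M)$ note $aK\leqslant aI_M\leqslant (N:I_M)I_M\leqslant N$), and it is in fact stated and proved in the paper as Theorem~\ref{T-C80}, though only \emph{after} Lemma~\ref{L-C76}. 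Since Theorem~\ref{T-C80} depends on nothing in between, there is no circularity, and your argument shows the long computation in the paper's proof is redundant: all the substance already sits in Lemma~\ref{T-C72}. What the paper's direct proof buys is only self-containment within Section~4; what your route buys is brevity and the structural insight that the lemma is a formal corollary of its pseudo-primary counterpart.
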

 \begin{proof}
 As $I_M$ is compact and $q\in L$ is proper by Theorem 5 of $\cite{CT}$ we have $qI_M\neq I_M$. Let $adX\leqslant qI_M$ and $aX\nleqslant qI_M$ for $a,\ d\in L,\ X\in M$. Then $a\nleqslant q$. We may suppose that $dX$ is a principal element. Assume that $(rad(qI_M):(dX))\neq 1$. Then there exists a maximal element $m\in L$ such that $(rad(qI_M):(dX))\leqslant m$. As $M$ is a multiplication lattice $L$-module and $m\in L$ is maximal, by Theorem 4 of $\cite{CT}$, two cases arise: 
 
 Case\textcircled{1}. For the principal element $dX\in M$, there exists a principal element $r\in L$ with $r\nleqslant m$ such that $r (dX)=O_M$. Then $r\leqslant (O_M:(dX))\leqslant (rad(qI_M):(dX))\leqslant m$ which is a contradiction. 
 
 Case\textcircled{2}. There exists a principal element $Y\in M$ and a principal element $b\in L$ with $b\nleqslant m$ such that $bI_M\leqslant Y$. Then $b(dX)\leqslant Y$, $badX\leqslant bqI_M=q(bI_M)\leqslant qY$ and $(O_M:Y)bI_M\leqslant (O_M:Y)Y=O_M$, since $Y$ is meet principal. As $M$ is faithful, it follows that $b(O_M:Y)=0$. Since $Y$ is meet principal, $(bdX:Y)Y=bdX$. Let $s=(bdX:Y)$ then $sY=bdX$ and so $asY=abdX\leqslant qY$. Since $Y$ is meet principal, $abdX = (abdX:Y)Y=cY$ where $c=(abdX:Y)$. Since $cY=abdX\leqslant qY$ and $Y$ is join principal, we have $c\vee (O_M:Y)=(cY:Y)\leqslant (qY:Y)=q\vee (O_M:Y)$. So $bc\leqslant bq\leqslant q$. On the other hand, since $Y$ is join principal, $c=(abdX:Y)=(asY:Y)=as\vee (O_M:Y)$ and so $abs\leqslant abs\vee b(O_M:Y)=b(as\vee (O_M:Y)) =bc\leqslant q$. If $b^n\leqslant q$ for some $n\in Z_+$ then $b^n\leqslant q\leqslant (rad(qI_M):(dX))\leqslant m$ which implies $b\leqslant m$. This is a contradiction and so $b\nleqslant \sqrt{q}$. Now as $abs\leqslant q,\ a\nleqslant q,\ b\nleqslant \sqrt{q}$ and $q$ is primary, we have $s\leqslant \sqrt{ q}$.  Hence by Lemma $\ref{L-C64}$, $b(dX)=sY\leqslant \sqrt{q}Y\leqslant \sqrt{q}I_M\leqslant rad(qI_M)$ which implies $b\leqslant (rad(qI_M):(dX))\leqslant m$, a contradiction. 
 
 Thus the assumption that $(rad(qI_M):(dX))\neq 1$ is absurd and so we must have $(rad(qI_M):(dX))=1$ which implies $dX\leqslant rad(qI_M)$. Therefore $qI_M$ is a pseudo-classical primary element of $M$.
 \end{proof}
  
 The following theorem gives the characterization of  pseudo-classical primary elements of a multiplication lattice $L$-module $M$.  
 \begin{thm}\label{T-C79}
 Let $L$ be a PG-lattice and $M$ be a faithful multiplication PG-lattice $L$-module with $I_M$ compact. For a proper element $N\in M$, the following statements are equivalent:
 
 \textcircled{1} $N$ is a pseudo-classical primary element of $M$.
  
 \textcircled{2} $(N:I_M)$ is a primary element of $L$.
   
 \textcircled{3} $N=qI_M$ for some primary element $q\in L$.
 
 \textcircled{4} $N$ is a primary element of $M$.
 \end{thm}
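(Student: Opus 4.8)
The plan is to mirror the proof of Theorem \ref{T-C73} almost verbatim, since the four statements here differ from those in Theorem \ref{T-C73} only in that \textcircled{1} now refers to the pseudo-classical primary property rather than the pseudo-primary one; statements \textcircled{2}, \textcircled{3}, \textcircled{4} are literally identical, so their mutual equivalence (in particular \textcircled{2}$\Longleftrightarrow$\textcircled{4} and \textcircled{2}$\Longrightarrow$\textcircled{3}) is already available and may simply be quoted. Hence it is enough to splice \textcircled{1} into the cycle, say via \textcircled{1}$\Longrightarrow$\textcircled{2}, \textcircled{2}$\Longrightarrow$\textcircled{3}, \textcircled{3}$\Longrightarrow$\textcircled{1}, \textcircled{3}$\Longrightarrow$\textcircled{4}, \textcircled{4}$\Longrightarrow$\textcircled{2}, exactly as in Theorem \ref{T-C73}.

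For \textcircled{1}$\Longrightarrow$\textcircled{2}: assume $N$ is pseudo-classical primary and let $ab\leqslant (N:I_M)$ with $a,b\in L$. Then $ab\,I_M\leqslant N$, so applying the defining property of a pseudo-classical primary element with $K=I_M$ yields either $aI_M\leqslant N$ or $bI_M\leqslant rad(N)$. Since $(rad(N):I_M)=\sqrt{N:I_M}$ by Theorem \ref{T-C71}, this says $a\leqslant (N:I_M)$ or $b\leqslant\sqrt{N:I_M}$, so $(N:I_M)$ is a primary element of $L$. For \textcircled{2}$\Longrightarrow$\textcircled{3}: as $M$ is a multiplication lattice $L$-module, $N=(N:I_M)I_M$, so $q=(N:I_M)$ works. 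For \textcircled{3}$\Longrightarrow$\textcircled{1}: this is precisely Lemma \ref{L-C76}, which already carried out the principal-element bookkeeping. For \textcircled{3}$\Longrightarrow$\textcircled{4}: this is Lemma \ref{L-C72}. For \textcircled{4}$\Longrightarrow$\textcircled{2}: if $N$ is primary in $M$ and $ab\leqslant (N:I_M)$, then $a(bI_M)\leqslant N$ forces $bI_M\leqslant N$ or $a^n I_M\leqslant N$, i.e. $b\leqslant (N:I_M)$ or $a\leqslant\sqrt{N:I_M}$, so $(N:I_M)$ is primary; this is the same argument as in Theorem \ref{T-C73}.

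I do not anticipate a genuine obstacle. The only technically delicate passage, namely that $qI_M$ is pseudo-classical primary whenever $q\in L$ is primary, has already been isolated and settled in Lemma \ref{L-C76} (the analogue of Lemma \ref{T-C72} in the pseudo-primary case); everything else reduces to the routine dictionary between $(N:I_M)$-arithmetic in $L$ and element-arithmetic in $M$ in a faithful multiplication lattice module, together with the identity $(rad(N):I_M)=\sqrt{N:I_M}$ supplied by Theorem \ref{T-C71}. As a by-product, comparing with Theorem \ref{T-C73} shows that in such a module the notions pseudo-primary, pseudo-classical primary, and primary all coincide.
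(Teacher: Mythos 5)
Your proposal is correct and follows essentially the same route as the paper: the paper also proves \textcircled{1}$\Longrightarrow$\textcircled{2} by taking $K=I_M$ in the definition, invokes Lemma \ref{L-C76} for \textcircled{3}$\Longrightarrow$\textcircled{1}, and dismisses the remaining implications as already contained in Theorem \ref{T-C73}. The only cosmetic difference is that for \textcircled{1}$\Longrightarrow$\textcircled{2} the paper passes from $bI_M\leqslant rad(N)=(\sqrt{N:I_M})I_M$ to $b\leqslant\sqrt{N:I_M}$ by cancellation of the compact $I_M$, whereas you use the equivalent identity $(rad(N):I_M)=\sqrt{N:I_M}$ from Theorem \ref{T-C71}.
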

 \begin{proof}
 \textcircled{1}$\Longrightarrow$\textcircled{2}. Suppose \textcircled{1} holds and let $ab\leqslant (N:I_M)$ for $a,\ b\in L$. Then as $N$ is a pseudo-classical primary element and $abI_M\leqslant N$, by Theorem 3.6 of \cite{MB1}, we have either $aI_M\leqslant N$ or $bI_M\leqslant rad(N)=(\sqrt{N:I_M})I_M$. Since $I_M$ is compact, by Theorem 5 of $\cite{CT}$, it follows that either $a\leqslant (N:I_M)$ or $b\leqslant \sqrt{N:I_M}$. Thus $(N:I_M)\in L$ is a primary element.
 
 \textcircled{3}$\Longrightarrow$\textcircled{1}. It is clear from Lemma $\ref{L-C76}$. 
 
 Other implications are clear from Theorem $\ref{T-C73}$.
 \end{proof}
 
 From the above Theorem $\ref{T-C79}$, it is clear that in a multiplication lattice $L$-module $M$, the concepts of pseudo-classical primary and primary elements coincide. 
 
 The following result is the main objective of this section.
 
 \begin{thm}
 Let $L$ be a PG-lattice and $M$ be a faithful multiplication PG-lattice $L$-module with $I_M$ compact.  
 If a proper element $N\in M$ is pseudo-classical primary then rad(N) is a prime element of $M$.
 \end{thm}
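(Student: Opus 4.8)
The plan is to argue exactly as in the proof of Theorem \ref{C-C71}, replacing the appeal to Theorem \ref{T-C73} by the analogous characterization for pseudo-classical primary elements, namely Theorem \ref{T-C79}. The key observation is that in a faithful multiplication PG-lattice $L$-module (with $L$ a PG-lattice and $I_M$ compact) the pseudo-classical primary elements of $M$ are precisely the elements of the form $qI_M$ with $q\in L$ primary, so the radical computation reduces to a statement about radicals of primary elements of $L$.

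First I would invoke Theorem \ref{T-C79}: since $N\in M$ is pseudo-classical primary, there exists a primary element $q\in L$ with $N=qI_M$. Next I would compute $rad(N)$ using Theorem 3.6 of \cite{MB1}, which gives $rad(N)=rad(qI_M)=(\sqrt{q})I_M$. Since $q$ is a primary element of $L$, its radical $\sqrt{q}\in L$ is a prime element of $L$. Finally I would apply Lemma \ref{L-C62} to the prime element $\sqrt{q}\in L$, concluding that $(\sqrt{q})I_M$ is a prime element of $M$; as $(\sqrt{q})I_M=rad(N)$, this shows $rad(N)$ is prime.

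I do not expect a genuine obstacle here: all the substantive work has already been done in Lemma \ref{L-C76} (which underlies the implication \textcircled{3}$\Rightarrow$\textcircled{1} of Theorem \ref{T-C79}), in the radical formula of \cite{MB1}, and in Lemma \ref{L-C62}. The only point requiring care is to cite Theorem \ref{T-C79} rather than Theorem \ref{T-C73}, and to note explicitly that primariness of $q$ is what makes $\sqrt{q}$ prime, which is exactly the hypothesis needed to feed Lemma \ref{L-C62}. Thus the proof will be essentially a three-line deduction mirroring Theorem \ref{C-C71}.
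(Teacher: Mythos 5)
Your proposal is correct and is exactly what the paper intends: the paper omits the proof, stating only that it "runs on the same lines as that of the proof of Theorem \ref{C-C71}," which is precisely your argument with Theorem \ref{T-C79} substituted for Theorem \ref{T-C73}. The three steps you give ($N=qI_M$ with $q$ primary, $rad(N)=(\sqrt{q})I_M$ via Theorem 3.6 of \cite{MB1}, and Lemma \ref{L-C62} applied to the prime element $\sqrt{q}$) match the paper's template exactly.
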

 \begin{proof}
 The proof runs on the same lines as that of the proof of Theorem \ref{C-C71} and hence omitted.
 \end{proof} 
 
 The following theorem gives the relation between pseudo-primary and pseudo-classical primary elements of $M$.
 
 \begin{thm}\label{T-C80}
 Every pseudo-primary element of an $L$-module $M$ is pseudo-classical primary.
 \end{thm}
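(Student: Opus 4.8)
The plan is to derive the pseudo-classical primary condition directly from the pseudo-primary condition by regrouping a product, so the argument is essentially a one-line consequence of the definitions. Since a pseudo-primary element is in particular proper, $N < I_M$ carries over and only the implication needs checking. Suppose $N \in M$ is pseudo-primary and let $a, b \in L$, $K \in M$ with $abK \leqslant N$. First I would rewrite $abK = a(bK)$ and regard $bK$ as a single element of $M$; applying the defining property of a pseudo-primary element to $a \in L$ and $bK \in M$, I obtain that either $a \leqslant (N:I_M)$ or $bK \leqslant rad(N)$.

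In the second case there is nothing more to prove, since $bK \leqslant rad(N)$ is precisely the second alternative in the definition of a pseudo-classical primary element. In the first case, $a \leqslant (N:I_M)$ means $aI_M \leqslant N$; since $K \leqslant I_M$ (as $I_M$ is the greatest element of $M$), monotonicity of the module multiplication gives $aK \leqslant aI_M \leqslant N$, which is the first alternative. Hence in either case $abK \leqslant N$ forces $aK \leqslant N$ or $bK \leqslant rad(N)$, so $N$ is pseudo-classical primary.

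I do not anticipate any genuine obstacle: the proof uses nothing beyond the two definitions and the monotonicity of multiplication, and in particular requires no hypotheses on $M$ (it need not be a multiplication, faithful, or PG-lattice $L$-module). The only subtlety to flag is that the pseudo-primary condition must be applied to the composite element $bK$ rather than to $K$ itself; once that is done, the bookkeeping $aK \leqslant aI_M \leqslant N$ closes the first case immediately.
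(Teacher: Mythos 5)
Your proof is correct and is essentially identical to the paper's: both decompose $abK$ as $a(bK)$, apply the pseudo-primary condition to $a$ and $bK$, and in the first case pass from $a\leqslant (N:I_M)$ to $aK\leqslant N$ (the paper via $(N:I_M)\leqslant (N:K)$, you via $aK\leqslant aI_M\leqslant N$, which is the same observation). Nothing further is needed.
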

 \begin{proof}
 Let $a(bK)=abK\leqslant N$ for $a,\ b\in L$, $K\in M$. Then as $N$ is pseudo-primary, we have either $a\leqslant (N:I_M)\leqslant (N:K)$ or $bK\leqslant rad(N)$ which implies either $aK\leqslant N$ or $bK\leqslant rad(N)$. Thus $N\in M$ is a pseudo-classical primary element.
 \end{proof} 
 
 However, a pseudo-classical primary element of $M$ need not be pseudo-primary as shown in the following example.
 
 \begin{example}
 Consider the Example $\ref{E-C71}$. It is easy to see that $N$ is a pseudo-classical primary element but not a pseudo-primary element of $L(M)$ (see H.~M.~Fazaeli et.~al.~\cite{FRS}).
 \end{example}
 
   The relation among these defined notions in an $L$-module $M$ as discussed in this paper is summarised in the following implication figure.

   \begin{figure}[h!]
   \centering
   $\begin{xy}
   (15,60)*+{Maximal}="v0"; %
   (15,30)*+{Prime}="v1";(100,30)*+{Pseudo-Primary}="v2";%
   (0,15)*+{Primary}="v3";(30,15)*+{Semiprime}="v4";%
   (15,0)*+{Classical~
   Prime }="v6";(100,0)*+{Pseudo-Classical~Primary}="v7";%
   (15,-30)*+{2-absorbing}="v8"; %
   {\ar@{-->} "v0"; "v1"};
   {\ar@{-->} "v1"; "v3"}; %
   {\ar@{->} "v1"; "v4"};%
   {\ar@{->} "v1"; "v2"}; %
   {\ar@{->} "v2"; "v7"}; %
   {\ar@{->} "v1";"v6"}; %
   {\ar@{->} "v6"; "v4"}; %
   {\ar@{->} "v6"; "v7"}
   {\ar@{-->} "v6"; "v8"};
   
   \end{xy}$
   \caption{Inter-relation among the elements of $M$}
   \label{fig1}
   \end{figure}
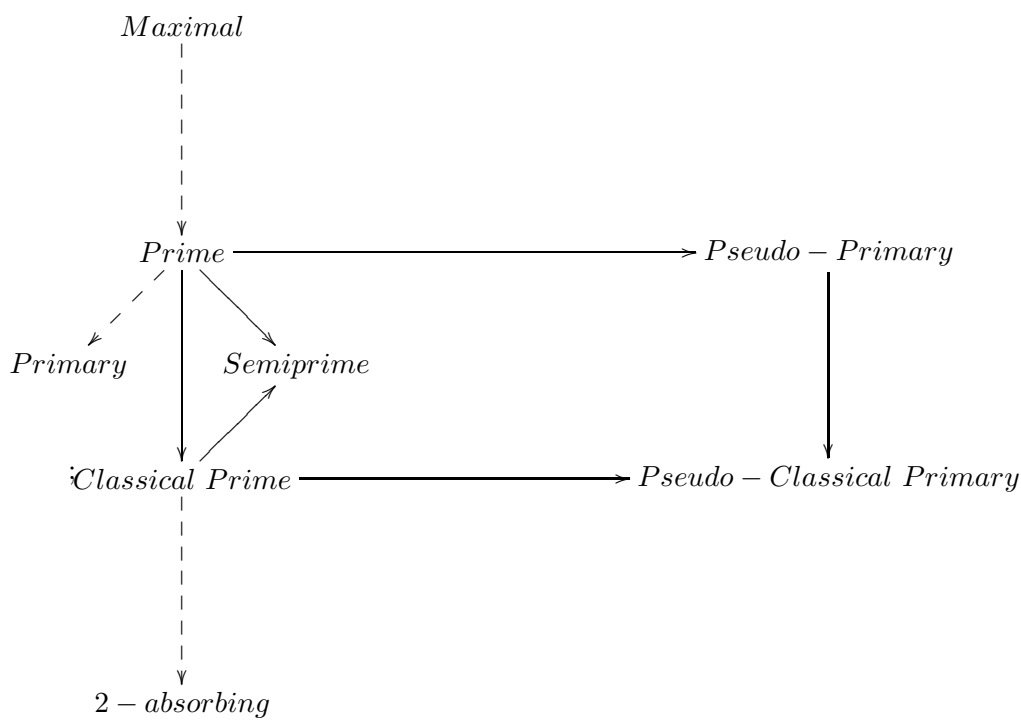  
    
 In veiw of above Theorem $\ref{T-C79}$ and Theorem $\ref{T-C80}$, we have the following corollary. 
 \begin{cor}
 Let $L$ be a PG-lattice and $M$ be a faithful multiplication PG-lattice $L$-module with $I_M$ compact.  
 For a proper element $N\in M$, the following statements are equivalent:
 
 \textcircled{1} $N$ is a pseudo-primary element of $M$. 
 
 \textcircled{2} $N$ is a pseudo-classical primary element of $M$.
  
 \textcircled{3} $(N:I_M)$ is a primary element of $L$.
   
 \textcircled{4} $N=qI_M$ for some primary element $q\in L$.
 
 \textcircled{5} $N$ is a primary element of $M$.
 \end{cor}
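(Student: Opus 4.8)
The plan is to read this corollary as a bookkeeping consequence of the two characterization theorems already established, so that essentially no new lattice-module argument is required. Concretely, I would close the cycle of implications $\textcircled{1}\Longrightarrow\textcircled{2}\Longrightarrow\textcircled{3}\Longrightarrow\textcircled{4}\Longrightarrow\textcircled{5}\Longrightarrow\textcircled{1}$ by quoting results verbatim.

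First, $\textcircled{1}\Longrightarrow\textcircled{2}$ is immediate from Theorem~\ref{T-C80}, which says every pseudo-primary element of an $L$-module $M$ is pseudo-classical primary (and needs no extra hypotheses on $M$). Next, the block of equivalences $\textcircled{2}\Longleftrightarrow\textcircled{3}\Longleftrightarrow\textcircled{4}\Longleftrightarrow\textcircled{5}$ is exactly the content of Theorem~\ref{T-C79}: its four equivalent statements — $N$ pseudo-classical primary, $(N:I_M)$ primary in $L$, $N=qI_M$ for some primary $q\in L$, $N$ primary in $M$ — coincide word-for-word with items $\textcircled{2}$–$\textcircled{5}$ here, and its standing hypotheses ($L$ a PG-lattice, $M$ a faithful multiplication PG-lattice $L$-module with $I_M$ compact) are precisely the ones assumed in the corollary. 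Finally, to return to $\textcircled{1}$ I would use Theorem~\ref{T-C73}, which under the same hypotheses gives that $N$ is primary iff $N$ is pseudo-primary (its implications $\textcircled{4}\Longrightarrow\textcircled{2}\Longrightarrow\textcircled{3}\Longrightarrow\textcircled{1}$, where its $\textcircled{4}$ reads ``$N$ primary'' and its $\textcircled{1}$ reads ``$N$ pseudo-primary''); hence $\textcircled{5}\Longrightarrow\textcircled{1}$. Equivalently, one can bypass the cycle altogether: Theorem~\ref{T-C73} already provides $\textcircled{1}\Longleftrightarrow\textcircled{3}$ and Theorem~\ref{T-C79} provides $\textcircled{2}\Longleftrightarrow\textcircled{3}$, so all five statements become mutually equivalent in one stroke.

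I do not anticipate any genuine obstacle here; this really is a corollary. The only points requiring care are (i) verifying that the internal numbering of Theorems~\ref{T-C73} and~\ref{T-C79} lines up with the labels $\textcircled{1}$–$\textcircled{5}$ used in the present statement, and (ii) checking that the hypotheses of those theorems are identical to the ones imposed in the corollary — both of which hold. Consequently the written proof can reasonably be compressed to a single sentence citing Theorems~\ref{T-C80}, \ref{T-C79} and \ref{T-C73}, which is presumably why the authors state it as a corollary rather than a theorem.
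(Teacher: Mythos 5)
Your proposal is correct and matches the paper's intent exactly: the authors introduce the corollary with ``In view of above Theorem~\ref{T-C79} and Theorem~\ref{T-C80}'' and then declare the proof obvious, which is precisely the combination of Theorems~\ref{T-C80}, \ref{T-C79} and \ref{T-C73} you spell out. No gaps.
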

 \begin{proof} 
 The proof is obvious. 
 \end{proof}
 
 Thus in a multiplication lattice $L$-module $M$, the concepts of pseudo-primary, pseudo-classical primary and primary elements coincide.
 
 In the next theorem, we show that the meet and join of a family of ascending chain of pseudo-classical primary elements of $M$ are again pseudo-classical primary.
 
 \begin{thm}
 Let $L$ be a PG-lattice and $M$ be a faithful multiplication PG-lattice $L$ module with $I_M$ compact.
 Let $\{N_i\mid i\in Z_+\}$ be a chain (ascending or descending) of pseudo-classical primary elements of $M$. Then 
 
 \textcircled{1} $\underset{i\in Z_+}{\wedge}N_i$ is a pseudo-classical primary element of $M$.
 
 \textcircled{2} $\underset{i\in Z_+}{\vee}N_i$ is a pseudo-classical primary element of $M$.
 \end{thm}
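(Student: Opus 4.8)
The plan is to bypass the long principal-element computations entirely and reduce both statements to the pseudo-primary case that has already been settled in Section~2. In a faithful multiplication PG-lattice $L$-module with $I_M$ compact, Theorem~\ref{T-C73} together with Theorem~\ref{T-C79} (equivalently, the corollary immediately preceding this theorem) says that the notions \emph{pseudo-classical primary}, \emph{pseudo-primary} and \emph{primary} all coincide. So each $N_i$ in the given chain is also a pseudo-primary element of $M$, and an element of $M$ is pseudo-classical primary exactly when it is pseudo-primary.

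First I would invoke the theorem of Section~2 stating that, for an $M$ of the present type, the meet and the join of an ascending or descending chain of pseudo-primary elements are again pseudo-primary. Applying it to $\{N_i\mid i\in Z_+\}$, viewed as a chain of pseudo-primary elements, yields at once that $\underset{i\in Z_+}{\wedge}N_i$ and $\underset{i\in Z_+}{\vee}N_i$ are pseudo-primary, and converting back via the coincidence above gives \textcircled{1} and \textcircled{2}. Along this route there is no real obstacle: the substantive work lives in the Section~2 theorem and in the identifications of Theorems~\ref{T-C73} and \ref{T-C79}.

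If one instead wants a self-contained argument, I would mirror the proof of the pseudo-primary chain theorem. Assuming $N_1\leqslant N_2\leqslant\cdots$: for \textcircled{1}, note $\underset{i\in Z_+}{\wedge}N_i\leqslant N_1<I_M$, take $abQ\leqslant\underset{i\in Z_+}{\wedge}N_i$ with $aQ\nleqslant\underset{i\in Z_+}{\wedge}N_i$, choose $j$ with $aQ\nleqslant N_j$, and get $bQ\leqslant rad(N_j)$ since $N_j$ is pseudo-classical primary; then for an arbitrary index $i$ split into the cases $aQ\nleqslant N_i$ (same reasoning applied to $N_i$) and $aQ\leqslant N_i$ (which forces $N_j<N_i$ along the chain, hence $bQ\leqslant rad(N_j)\leqslant rad(N_i)$ by monotonicity of $rad$), and finally use $\underset{i\in Z_+}{\wedge}(rad(N_i))=rad(\underset{i\in Z_+}{\wedge}N_i)$ from Lemma~\ref{L-C74}. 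For \textcircled{2}, $\underset{i\in Z_+}{\vee}N_i<I_M$ because $I_M$ is compact; given $abQ\leqslant\underset{i\in Z_+}{\vee}N_i$ with $aQ\nleqslant\underset{i\in Z_+}{\vee}N_i$, use the chain hypothesis to reduce to $abQ\leqslant N_j$, $aQ\nleqslant N_j$ for a single $j$, and then $bQ\leqslant rad(N_j)\leqslant rad(\underset{i\in Z_+}{\vee}N_i)$.

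The one step I expect to need care is exactly this last reduction in \textcircled{2}, namely passing from a relation against the supremum $\underset{i\in Z_+}{\vee}N_i$ of an ascending chain to the same relation against a single member $N_j$ (for a descending chain it is trivial, since the join is $N_1$). This is where the chain hypothesis is genuinely used, and compactness of $I_M$ enters to keep $\underset{i\in Z_+}{\vee}N_i$ proper; it is handled just as in the corresponding step of the pseudo-primary chain theorem. I would therefore present the first route as the proof and keep the second only as the explanation of why it works.
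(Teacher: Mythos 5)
Your proposal is correct, but your main route is genuinely different from the paper's. The paper proves the result directly, mirroring the pseudo-primary chain theorem of Section~2: for the meet it takes $rsX\leqslant\underset{i}{\wedge}N_i$ with $r\nleqslant\underset{i}{\wedge}(N_i:X)$, runs the two-case chain argument you sketch as your backup, and finishes with Lemma~\ref{L-C74}; for the join it reduces to a single $N_j$ exactly as you describe. Your primary route --- observing that under these hypotheses Theorems~\ref{T-C73} and~\ref{T-C79} make pseudo-classical primary, pseudo-primary and primary coincide, so the statement is an immediate corollary of the Section~2 chain theorem for pseudo-primary elements --- is a legitimate and shorter derivation: all the results you cite precede this theorem, carry identical hypotheses, and the meet and join remain proper (the latter by compactness of $I_M$), so the equivalences apply to them as well. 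What the paper's direct argument buys is visibility of where the hypotheses enter (part~\textcircled{2} uses essentially only compactness of $I_M$, and part~\textcircled{1} needs the multiplication-module machinery only through Lemma~\ref{L-C74}); what your reduction buys is brevity and no repetition of the chain manipulation. One remark: the delicate step you flag in~\textcircled{2}, passing from $rsX\leqslant\underset{i}{\vee}N_i$ to $rsX\leqslant N_j$ for a single $j$, is present verbatim in the paper's proof and in the Section~2 theorem your reduction leans on; it genuinely requires a compactness/CG argument on $rsX$ and not merely the chain hypothesis, so neither route escapes it --- your version is no worse off than the original on this point.
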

 \begin{proof}
 Let $N_1\leqslant N_2\leqslant\cdots \leqslant N_i\leqslant\cdots$ be an ascending chain of pseudo-classical primary elements of $M$.
 
 \textcircled{1}. Clearly, $(\underset{i\in Z_+}{\wedge}N_i)\neq I_M$. Let $rsX\leqslant  (\underset{i\in Z_+}{\wedge}N_i)$ and $r\nleqslant  ((\underset{i\in Z_+}{\wedge}N_i):X)= \underset{i\in Z_+}{\wedge}(N_i:X)$ for $r,s\in L$ and $X\in M$. Then $r\nleqslant (N_j:X)$ for some $j\in Z_+$ but $rsX\leqslant N_j$ which implies $sX\leqslant rad(N_j)$ as $N_j$ is pseudo-classical primary. Now let $N_i\neq N_j$. Then as $\{N_i\}$ is a chain, we have either $N_i<N_j$ or $N_j<N_i$. If $N_i<N_j$ then $(N_i:X)\leqslant (N_j:X)$ and accordingly $r\nleqslant (N_i:X)$ but $rsX\leqslant N_i$ which implies $sX\leqslant rad(N_i)$ as $N_i$ is pseudo-classical primary. If $N_j<N_i$ then $sX\leqslant rad(N_j)\leqslant rad(N_i)$. Thus by Lemma $\ref{L-C74}$, $sX\leqslant \underset{i\in Z_+}{\wedge}(rad(N_i))=rad({\underset{i\in  Z_+}{\wedge}N_i)}$ which proves that $\underset{i\in Z_+}{\wedge}N_i$ is a pseudo-classical primary element of $M$. 
 
 \textcircled{2}. Since $I_M$ is compact, $(\underset{i\in Z_+}{\vee}N_i)\neq I_M$. Let $rsX\leqslant (\underset{i\in Z_+}{\vee}N_i)$ and $r\nleqslant  ((\underset{i\in Z_+}{\vee}N_i):X)$ for $r\in L$, $X\in M$. Then as $\{N_i\}$ is a chain, we have $rsX\leqslant N_j$ for some $j\in Z_+$ but $r\nleqslant (N_j:X)$ which implies $sX\leqslant rad(N_j)\leqslant rad({\underset{i\in  Z_+}{\vee}N_i)}$ as $N_j$ is pseudo-classical primary and thus $\underset{i\in Z_+}{\vee}N_i$ is a pseudo-classical primary element of $M$.
 \end{proof}
 
 We conclude this paper by finding a condition for an element $N$ of an $L$-module $M$ (which need not be a multiplication lattice $L$-module) to be pseudo-classical primary. 
 
 \begin{lem}\label{L-C77}
 In an $L$-module $M$, if a proper element $Q\in M$ is prime such that $X\leqslant Q$ then $(Q:K)\in L$ is prime such that $\sqrt{X:K}\leqslant (Q:K)$ where $K\in M$ is a proper element such that $K\nleqslant Q$.
 \end{lem}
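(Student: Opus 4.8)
The plan is to obtain the primeness of $(Q:K)$ first, and then read off the radical inequality as an immediate consequence.

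First I would check that $(Q:K)$ is a proper element of $L$: since $K\nleqslant Q$ and $1K=K$, we have $1\nleqslant (Q:K)$, so $(Q:K)<1$. For the primeness, recall that every prime element of $M$ is classical prime, so $Q$ is classical prime; hence by Theorem \ref{T-C78}-\textcircled{1}, $(Q:K)\in L$ is prime for every $K\nleqslant Q$ in $M$. Alternatively, one can argue directly: if $ab\leqslant (Q:K)$ for $a,b\in L$, then $a(bK)\leqslant Q$, and the primeness of $Q$ in $M$ forces either $bK\leqslant Q$, i.e.\ $b\leqslant (Q:K)$, or $aI_M\leqslant Q$, i.e.\ $a\leqslant (Q:I_M)\leqslant (Q:K)$.

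Next, since $X\leqslant Q$ we have $(X:K)\leqslant (Q:K)$ and therefore $\sqrt{X:K}\leqslant \sqrt{Q:K}$, so it suffices to prove $\sqrt{Q:K}\leqslant (Q:K)$. Let $x\in L_\ast$ satisfy $x^n\leqslant (Q:K)$ for some $n\in Z_+$. Writing $x^n=x\,x^{n-1}$ and using that $(Q:K)$ is prime, a downward induction on $n$ yields $x\leqslant (Q:K)$. Taking the join over all such compact $x$ gives $\sqrt{Q:K}\leqslant (Q:K)$, and combining this with the previous inequality we conclude $\sqrt{X:K}\leqslant (Q:K)$, as required.

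I do not expect a genuine obstacle here: the whole argument rests on the single observation that $(Q:K)$ is prime, and the only mildly technical point is the descent step showing that a compact element whose $n$-th power lies below a prime element already lies below it.
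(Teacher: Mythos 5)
Your proposal is correct and follows essentially the same route as the paper: the primeness of $(Q:K)$ is obtained directly from $ab\leqslant (Q:K)\Rightarrow a(bK)\leqslant Q$ and the primeness of $Q$, and the radical inequality then follows because a prime element absorbs powers of (compact) elements lying below it. Your detour through $\sqrt{Q:K}\leqslant (Q:K)$ is just a slightly more careful packaging of the paper's final step, so there is nothing substantive to distinguish the two arguments.
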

 \begin{proof}
 As $K\nleqslant Q$, we have $K\nleqslant X$. So $(Q:K)\neq 1$ and $(X:K)\neq 1$. Let $ab\leqslant (Q:K)$ for $a,\ b\in L$. Then as $Q$ is prime and $a(bK)\leqslant Q$, we have either $a\leqslant (Q:I_M)$ or $bK\leqslant Q$ which implies either $a\leqslant (Q:I_M)\leqslant (Q:K)$ or $b\leqslant (Q:K)$. Thus $(Q:K)\in L$ is prime. Further, if $a\leqslant \sqrt{X:K}$ then $a^n\leqslant (X:K)\leqslant (Q:K)$ for some $n\in Z+$ which implies $a\leqslant (Q:K)$ and so $\sqrt{X:K}\leqslant (Q:K)$.
 \end{proof}
 
 \begin{lem}\label{L-C78}
 If $N$ is a proper element of an $L$-module $M$ then $\sqrt{N:K}\leqslant (rad(N):K)$ for every proper element $K\in M$ such that $K\nleqslant N$.
 \end{lem}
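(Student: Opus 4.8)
The plan is to peel off the radical on the right and reduce everything to a statement about a single prime element lying over $N$. Recall that $\sqrt{N:K}=\vee\{a\in L_\ast\mid a^n\leqslant(N:K)\text{ for some }n\in Z_+\}$ and that, as already observed in this section, $rad(N)=\wedge V(N)$. Since $(rad(N):K)$ is a fixed element of $L$ and $\sqrt{N:K}$ is a join of compact elements, it is enough to prove that every compact $a\in L_\ast$ with $a^n\leqslant(N:K)$ for some $n\in Z_+$ satisfies $a\leqslant(rad(N):K)$, equivalently $aK\leqslant rad(N)$, equivalently $aK\leqslant P$ for every prime element $P\in M$ with $N\leqslant P$.

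So I would fix such an $a$, an exponent $n$, and a prime $P$ with $N\leqslant P$. Using join distributivity of the scalar action one has $(N:K)K\leqslant N$, hence $a^nK\leqslant(N:K)K\leqslant N\leqslant P$. The crux is then a short induction on $m\geqslant 1$ showing that $a^mK\leqslant P$ forces $aK\leqslant P$: for $m=1$ there is nothing to do, and for $m\geqslant 2$ we write $a^mK=a\,(a^{m-1}K)\leqslant P$ and invoke primeness of $P$ to get either $aI_M\leqslant P$ (whence $aK\leqslant aI_M\leqslant P$) or $a^{m-1}K\leqslant P$ (whence $aK\leqslant P$ by the induction hypothesis). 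Applying this with $m=n$ gives $aK\leqslant P$; since $P$ was an arbitrary prime above $N$, we conclude $aK\leqslant\wedge V(N)=rad(N)$, i.e.\ $a\leqslant(rad(N):K)$, and taking the join over all admissible $a$ finishes the proof.

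I do not expect a real obstacle; the induction on the exponent is the only moving part, and it is immediate from the definition of a prime element of $M$. Two minor remarks worth recording in passing: if no prime element of $M$ lies above $N$ then $rad(N)=I_M$ and the inequality is trivial, and the hypothesis $K\nleqslant N$ is in fact not needed for the inequality itself, serving only to make $\sqrt{N:K}\neq 1$ so that the statement is genuinely informative.
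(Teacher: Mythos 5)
Your proof is correct and follows essentially the same route as the paper: both arguments reduce to showing $(\sqrt{N:K})K\leqslant P$ for each prime $P\in V(N)$ and then pass to the meet $rad(N)=\wedge V(N)$. Where the paper cites Lemma~\ref{L-C77} (that $(P:K)$ is a prime element of $L$ containing $\sqrt{N:K}$), you inline the identical use of primeness of $P$ as an induction on the exponent; your side remarks (triviality when $V(N)=\emptyset$, and the dispensability of $K\nleqslant N$) are also accurate.
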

 \begin{proof}
 Let $K\in M$ be a proper element such that $K\nleqslant N$. So $(N:K)\neq 1$. Let $P\in M$ be prime such that $N\leqslant P$. Then by Lemma $\ref{L-C77}$, $(P:K)\in L$ is prime such that $\sqrt{N:K}\leqslant (P:K)$ which implies $(\sqrt{N:K})K\leqslant (P:K)K\leqslant P$. Thus whenever $P\in M$ is prime such that $N\leqslant P$, we have $(\sqrt{N:K})K\leqslant P$. It follows that $(\sqrt{N:K})K\leqslant rad(N)$. Thus $\sqrt{N:K}\leqslant (rad(N):K)$.  
 \end{proof}
 
 \begin{thm}
 Let $N$ be a proper element of an $L$-module $M$. For every proper element $K\in M$ such that $K\nleqslant N$, if $(N:K)\in L$ is a primary element then $N$ is a pseudo-classical primary element of $M$.  
 \end{thm}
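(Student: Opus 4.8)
The plan is to verify the defining property of a pseudo-classical primary element head-on. Let $a,b\in L$ and $K\in M$ with $abK\leqslant N$, and suppose $aK\nleqslant N$; I must show $bK\leqslant rad(N)$. Since $aK\leqslant 1\cdot K=K$, the hypothesis $aK\nleqslant N$ forces $K\nleqslant N$, and (taking $K$ proper, which is the only case that matters --- see the closing remark) the standing assumption of the theorem yields that $(N:K)\in L$ is a primary element. Rewriting $abK\leqslant N$ as $ab\leqslant(N:K)$, it therefore suffices to show that either $a\leqslant(N:K)$ --- which is impossible, since it would give $aK\leqslant N$ --- or $b\leqslant\sqrt{N:K}$.

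To extract this from primariness, which is stated only for compact factors, I would use that $L$ is compactly generated. Suppose $a\nleqslant(N:K)$ and pick a compact $a_{0}\leqslant a$ with $a_{0}\nleqslant(N:K)$. For every compact $b_{0}\leqslant b$ we then have $a_{0}b_{0}\leqslant ab\leqslant(N:K)$, so primariness of $(N:K)$ gives $b_{0}^{n}\leqslant(N:K)$ for some $n\in Z_{+}$, i.e.\ $b_{0}\leqslant\sqrt{N:K}$; joining over all such $b_{0}$ yields $b\leqslant\sqrt{N:K}$. (Equivalently, one may simply invoke the standard fact that for a primary $q\in L$, $ab\leqslant q$ implies $a\leqslant q$ or $b\leqslant\sqrt q$.) Hence in all cases $a\leqslant(N:K)$ or $b\leqslant\sqrt{N:K}$.

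The first alternative being excluded, $b\leqslant\sqrt{N:K}$. Since $K\nleqslant N$, Lemma~\ref{L-C78} applies and gives $\sqrt{N:K}\leqslant(rad(N):K)$, whence $bK\leqslant(\sqrt{N:K})K\leqslant(rad(N):K)K\leqslant rad(N)$. This is exactly the required implication, so $N$ is a pseudo-classical primary element of $M$.

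I expect the one genuine obstacle to be the passage from the compact-factor definition of ``primary'' to arbitrary $a,b\in L$; this is precisely the point at which the blanket hypothesis that $L$ is compactly generated is needed, and everything after it is routine bookkeeping with residuals together with the single application of Lemma~\ref{L-C78}. A minor caveat concerns $K=I_{M}$: then $(N:K)=(N:I_{M})$ need not be primary under the stated hypothesis, so the statement should be understood as quantifying over proper $K$ (for which, as just shown, it holds), or, if one insists on including $K=I_{M}$, by additionally assuming $(N:I_{M})$ is primary.
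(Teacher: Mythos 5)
Your proof is correct and follows essentially the same route as the paper's: reduce $abK\leqslant N$ to $ab\leqslant(N:K)$, invoke primariness of $(N:K)$, and pass from $b\leqslant\sqrt{N:K}$ to $bK\leqslant rad(N)$ via Lemma~\ref{L-C78}. You are in fact slightly more careful than the paper on two points it glosses over --- the compact-element formulation of ``primary'' in $L$ and the $K=I_M$ edge case not covered by the hypothesis --- but these are refinements of the same argument, not a different approach.
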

 \begin{proof}
 Let $abX\leqslant N$ for $a,\ b\in L$, $X\in M$. If $X\leqslant N$ then we are done since $aX\leqslant X\leqslant N$. If $X\nleqslant N$ then by hypothesis$, (N:X)\in L$ is primary. So $ab\leqslant (N:X)$ implies that either $a\leqslant (N:X)$ or $b\leqslant \sqrt{N:X}\leqslant (rad(N):X)$, by Lemma $\ref{L-C78}$. Thus either $aX\leqslant N$ or $bX\leqslant rad(N)$ and hence $N\in M$ is a pseudo-classical primary element.
 \end{proof}

\end{document}